\definecolor{darkblue}{RGB}{0,0,160}
\def\eps{\varepsilon}
\def\d{{\rm d}}
\def\dist{{\rm dist}}
\def\R {\mathbb{R}}
\def\N {\mathbb{N}}
\def\B {{B}}
\def\sign{{\mathrm{sign} }}
\def\Z {{\mathbb Z}}
\def \l {\langle}
\def \r {\rangle}
\def \and{\qquad\text{and}\qquad}
\newcommand{\supp}{\mathrm{supp}\,}
\newcommand{\ind}{\mathbf{1}}
\newcounter{thms}
\newcounter{other}
\numberwithin{other}{section}
\newtheorem{proposition}[other]{Proposition}
\newtheorem{theorem}[thms]{Theorem}
\newtheorem*{theorem*}{Theorem}
\newtheorem*{proposition*}{Proposition}
\newtheorem{corollary}{Corollary}
\numberwithin{corollary}{thms}
\newtheorem{lemma}[other]{Lemma}
\theoremstyle{definition}
\newtheorem{remark}[other]{Remark}
\newtheorem*{remark*}{Remark}
\newtheorem{definition}[other]{Definition}
\def\vint_#1{\mathchoice%
      {\mathop{\kern 0.2em\vrule width 0.6em height 0.69678ex depth -0.58065ex
              \kern -0.8em \intop}\nolimits_{\kern -0.7em#1}}%
      {\mathop{\kern 0.1em\vrule width 0.5em height 0.69678ex depth -0.60387ex
              \kern -0.6em \intop}\nolimits_{#1}}%
      {\mathop{\kern 0.1em\vrule width 0.5em height 0.69678ex depth -0.60387ex
              \kern -0.6em \intop}\nolimits_{#1}}%
      {\mathop{\kern 0.1em\vrule width 0.5em height 0.69678ex depth -0.60387ex
              \kern -0.6em \intop}\nolimits_{#1}}}
\def\vintslides_#1{\mathchoice%
      {\mathop{\kern 0.1em\vrule width 0.5em height 0.697ex depth -0.581ex
              \kern -0.6em \intop}\nolimits_{\kern -0.7em#1}}%
      {\mathop{\kern 0.1em\vrule width 0.3em height 0.697ex depth -0.604ex
              \kern -0.4em \intop}\nolimits_{#1}}%
      {\mathop{\kern 0.1em\vrule width 0.3em height 0.697ex depth -0.604ex
              \kern -0.4em \intop}\nolimits_{#1}}%
      {\mathop{\kern 0.1em\vrule width 0.3em height 0.697ex depth -0.604ex
              \kern -0.4em \intop}\nolimits_{#1}}}
\newcommand{\intav}{\vint}
\newcommand{\aveint}[2]{\mathchoice%
      {\mathop{\kern 0.2em\vrule width 0.6em height 0.69678ex depth -0.58065ex
              \kern -0.8em \intop}\nolimits_{\kern -0.7em#1}^{#2}}%
      {\mathop{\kern 0.1em\vrule width 0.5em height 0.69678ex depth -0.60387ex
              \kern -0.6em \intop}\nolimits_{#1}^{#2}}%
      {\mathop{\kern 0.1em\vrule width 0.5em height 0.69678ex depth -0.60387ex
              \kern -0.6em \intop}\nolimits_{#1}^{#2}}%
      {\mathop{\kern 0.1em\vrule width 0.5em height 0.69678ex depth -0.60387ex
              \kern -0.6em \intop}\nolimits_{#1}^{#2}}}
\renewcommand*{\cdots}{%
  \mathinner{{\cdotp}{\cdotp}{\cdotp}}%
}
\numberwithin{equation}{section}
\title[A metric approach to sparse domination]{A metric approach to sparse domination}
\author[J.\ M. Conde-Alonso]{Jos\'e M. Conde-Alonso}
\address[J.\ M. Conde-Alonso]{Universidad Aut\'onoma de Madrid - Departamento de Matem\'aticas, 7 Francisco Tom\'as y Valiente, 28049 Madrid, Spain}
\email{\href{mailto:jose.conde@uam.es}{\textnormal{jose.conde@uam.es}}}
\thanks{J.\ M.\ Conde-Alonso was partially  supported by ERC Grant 32501 and has been partially supported by the Madrid Government (Comunidad de Madrid-Spain) under the Multiannual Agreement with Universidad Autónoma de Madrid in the line of action encouraging youth research doctors, in the context of the V PRICIT, grant number SI1/PJI/2019-00514.}
\author[F.\ Di Plinio]{Francesco Di Plinio} 
\address[F.\ Di Plinio]{Department of Mathematics, Washington University in Saint Louis,  1 Brookings Drive, Saint Louis, Mo 63130, USA}
\email{\href{mailto:francesco.diplinio@wustl.edu}{\textnormal{francesco.diplinio@wustl.edu}}\Envelope} 
\thanks{F. Di Plinio was partially supported by the National Science Foundation under the grants     NSF-DMS-1650810, NSF-DMS-1800628, NSF-DMS-2000510, by the Severo Ochoa Program SEV-2013-0323 and by Basque Government BERC Program 2014-2017}
\author[I. Parissis]{Ioannis Parissis}
  \address[I. Parissis]{Departamento de Matem\'aticas, Universidad del Pais Vasco, Aptdo. 644, 48080 Bilbao, Spain  
\newline  \indent and  Ikerbasque, Basque Foundation for Science, Bilbao, Spain}
\email{\href{mailto:ioannis.parissis@ehu.es}{\textnormal{ioannis.parissis@ehu.es}}}
\thanks{I. Parissis is partially supported by the project PGC2018-094528-B-I00 (AEI/FEDER, UE) with acronym ``IHAIP'', grant T1247-19 of the Basque Government and IKERBASQUE.}
\author[M.\ N. Vempati]{Manasa N. Vempati} 
\address[M.\ N.\ Vempati]{Department of Mathematics, Washington University in Saint Louis,   1 Brookings Drive, Saint Louis, Mo 63130, USA}
\email{\href{mailto:m.vempati@wustl.edu}{\textnormal{m.vempati@wustl.edu}}}
\subjclass[2010]{Primary: 42B20. Secondary: 42B25}
\keywords{Sparse domination,  quasi-metric spaces, singular integral operators, Radon transform, weighted norm inequalities}
\begin{document}

\begin{abstract}
We present a general approach to sparse domination based on single-scale   $L^p$-improving  as a key assumption. The results are formulated in the setting of metric spaces of homogeneous type and avoid completely the use of dyadic-probabilistic techniques as well as of Christ-Hyt\"onen-Kairema cubes. Among the applications of our general principle, we recover  sparse domination of Dini-continuous Calder\'on-Zygmund kernels on spaces of homogeneous type, we prove   a family of sparse bounds for maximal functions associated to  convolutions with measures exhibiting Fourier decay, and we deduce sparse estimates for Radon transforms along polynomial submanifolds of $\R^n$. \end{abstract}

\maketitle

\section{Introduction}  The prototypical example of a singular integral operator of interest in Harmonic Analysis, the Hilbert transform, may be decomposed into the $\ell^1$-superposition over scales of convolutions  with   a  suitably chosen and rescaled smooth function. This paradigm of superposition of single-scale operators is most general, and also extends in particular to Radon transforms, defined by convolution with a measure supported on a lower dimensional set. 

Sparse domination theory rose to prominence in the pursuit of sharp weighted norm inequalities for Calder\'on-Zygmund operators, through the seminal works of Lerner \cite{Ler1,Ler2015,Ler2016}, and Lacey \cite{Lac2017}. Its main thrust is to estimate, pointwise, in dual form or in norm, the singular integral $Tf$ by a sparse operator. That is, a tamer, positive and localized multiscale operator $Sf$ which is a superposition of averages of $f$ on a \emph{sparse}-- i.e.\ having pairwise disjoint major subsets--  collection of cubes. This control is performed via some type of high-low, localized  cancellation enjoyed by $T$, and has since been carried out for much more singular operators than those of Cald\'eron-Zygmund type: a non-exhaustive list includes  modulation invariant operators \cite{CuDPOu}, non-integral operators \cite{BFP}, rough kernels \cite{CoCuDPOu,HRT} oscillatory integrals \cite{KL}, discrete singular integrals \cite{CKL,HKLY}, and, most importantly for the present article, Radon transforms, beginning with the work of Lacey \cite{LSMF} on the spherical maximal operator
\[
A f(x) = \sup_{t>0} |A_t f(x)|, \qquad x\in \R^d,
\]
where $A_t$ is the spherical average on $x+ t \mathbb S^{d-1}$. Lacey showed that a high-low cancellation type  strengthening of the well known $L^p$-improving property  of  the single scale operator $f\mapsto \sup_{t\sim 1 } |A_tf|$ may be upgraded into a sparse domination type result by a slick reformulation of the high-low scheme employed in \cite{CoCuDPOu}. The work \cite{LSMF} was followed by the moment curve analogue of Cladek and Ou \cite{ClOu}, see also \cite{Ob}, and by the general result of Hu \cite{Hu}, which achieves a sparse domination type result, and consequent weighted norm inequalities, for singular integrals on finite type submanifolds in the generality of Christ, Nagel, Stein and Wainger \cite{CNSW}. The works \cites{ClOu,Ob,Hu} operate at different levels of generality within the footprint of \cite{LSMF}: in particular,   the integral representation of the kernel is relied upon at different points, and the iterative argument leading to sparse domination involves a discretization of the operator which is made possible by variants of the Christ-Hyt\"onen-Kairema dyadic systems in spaces of homogeneous type \cite{MCh,HKa}. In fact, the dominating sparse operator involves averages over these dyadic cubes.

This article develops  a  sparse domination principle, formulated in the generality of homogeneous measure metric spaces \emph{under natural minimal structural assumptions}. To begin with,   the operators of interest can be written as sums of single-scale operators, and each such piece is well localized. No integral representation for our operators is assumed, in particular the kernel estimates of Calder\'on-Zygmund theory are not generally available. Instead, these are replaced by postulating a version of suitably normalized $L^p$-improving estimates for each single-scale piece of the operator, in accordance with the  approach of Lacey in \cite{LSMF}, then leading to a corresponding sparse domination result for both $\ell^1$-multiscale, and maximal operators of this nature. The dominating sparse form involves averaging over sparse collections of quasimetric balls. No appeal to dyadic systems in the vein of \cite{MCh,HKa} is needed. Our results appear to be optimal in this  sense: if   a sparse domination holds result in an open range of indices, then the scale-invariant $L^p$-improving property follows. 

The main results of this article, Theorem \ref{th:SD} for the $\ell^1$ sum and Theorem \ref{th:max} for the maximal operator associated to a sequence of single scale operator $T(s)$, may be in fact  more detailedly described as follows. In the general context of spaces of homogeneous type,  in addition to the structural single scale localization property of each $T(s)$, it suffices to have uniform $L^p$-boundedness of partial sums or of the maximal operator and an $L^{p_1}\to L^{p_2 {'} }$-improving property with modulus of continuity to ensure a $(p_1,p_2)$ sparse bound. In fact we work with Dini-type moduli of continuity, in line with the best known results in the Euclidean case \cite{CoCuDPOu,HRT,Ler2016}, and an improvement over previous work in the context of Radon transforms and maximal Fourier multipliers \cite{ClOu,LSMF,Ob,Hu}. See also  for instance \cite{KLi1} for sparse domination theorems within the context of Calder\'on-Zygmund operators with rougher moduli of continuity.
 Also, Proposition \ref{prop:partial} provides a converse to Theorems \ref{th:SD}, \ref{th:max} in an open range of exponents. The main results are stated in Section \ref{s:mr}, together with laying out the framework of spaces of homogeneous type we work with. 

Section \ref{sec:Lpimproving} details several concrete applications of our main theorems. The first two are of classical nature: Theorem \ref{thm:czo} is a new form of the well-known sparse domination for Dini-continuous Calder\'on-Zygmund operators on spaces of homogeneous type. This result  was first obtained in \cite{Kara,VZK},  extending to Dini moduli of continuity the $A_2$ theorem in homogeneous spaces of \cite{NRV}. Unlike \cites{Kara,VZK,NRV}, our proof does not rely in any way on  the  dyadic systems constructed in \cite{MCh,HKa}.  Corollary \ref{c:geomax} is instead a deduction of the sparse bound for  geometric maximal operators in spaces of homogeneous type.

The most conspicuous applications are provided in the context of maximal and singular Radon transforms. Our general setting is the Euclidean  space $\R^n$ equipped with a quasi-norm which is homogeneous with respect to a dilation semigroup $\{\delta_r:r>0\}$.    Theorem \ref{thm:sparseFT}, 	which is suitably deduced from Theorem \ref{th:max},  contains  a sparse estimate for the maximal operator 
\[
T_{\star } f \coloneqq \sup_{s} |f*\d\mu_s| 
\]
where each $\mu_s$ is the $\delta_s$-pushforward of some Borel measure $m^s$ supported in the unit metric ball, provided that the $m^s$ have uniform algebraic Fourier decay rate at $\infty$. An analogous estimate is established for the $\ell^1$-sum if $m^s$ are of  cancellative nature, using Theorem \ref{th:SD} instead. Theorem \ref{thm:sparseFT} is a sparse version of the stalwart result of Duoandikoetxea and Rubio de Francia \cite[Theorems A and B]{DR}, and it has not appeared in previous literature. As a further application, we derive from it a sparse domination theorem for singular Radon transform along polynomial subvarieties of $\R^n$, stated in Corollary \ref{c:radon1}. Results of this type for the $\ell^1$-sum are contained in the recent article by Hu \cite{Hu}, in fact within the  more general framework of singular integrals on finite-type submanifolds in the vein of \cite{CNSW}. While it is plausible that the arguments of  \cite{Hu} may be adapted to cover the maximal function case, the maximal case of Corollary \ref{c:radon1} has not appeared before.

The structure of the remaining sections of the paper is as follows. Section \ref{sec2} contains some necessary preliminaries about Whitney coverings in geometrically doubling metric spaces, which are relevant in the proof of the main theorems. In fact, part of the interest of this paper comes from demonstrating that the  Whitney covering properties are sufficient to generate a  sparse collection of balls whose associated sparse $(p_1,p_2)$-form controls all localized and improving operators. Such collection is constructed in Lemma \ref{lem:tedious} and subsequently employed in   the proofs of Theorems \ref{th:SD} and \ref{th:max}, which   are carried out in Section \ref{sec:proofmain} to \ref{sec:max}. 

\begin{remark*}Before the first version of this article was made publicly available, David Beltran, Joris Roos and Andreas Seeger kindly shared with us their  preprint  \cite{BRS} on multi-scale sparse domination. Although in different settings,  both papers use scale-invariant versions of the $L^p$-improving property as a standing assumption and cover some classes of singular Radon transforms such as the one in \cite{Ob}. We thank the authors of  \cite{BRS} for sharing their preprint.
\end{remark*}

\subsection*{Acknowledgments} This research project originated  during the workshop on \emph{Sparse Domination of Singular Integrals}, held at the American Institute of Mathematics (AIM), October 9--13, 2017. The authors want to express their gratitude to the personnel and staff of AIM. The authors would like to thank the referees for an expert reading and several helpful comments, leading to a significant improvement of the article.

\section{Preliminaries and main results} \label{s:mr} The next paragraphs describe our general setup and are instrumental to the  forthcoming statement of the main results. 

\subsection{The space of homogeneous type} Let ${\mathbb{X}}$ be a set equipped with a quasi-metric $\d$. Here and in what follows a function $\d:{\mathbb{X}} \times {\mathbb{X}} \to [0,\infty)$ will be called a \emph{quasi-metric} if for all $x,y,z\in {\mathbb{X}}$ we have that $\d(x,y)=0\Leftrightarrow x=y$ and there exist constants $c_\d,\tilde c_\d ' \geq 1$ such that $\d(x,y)\leq \tilde c_\d \d(y,x)$ and $\d(x,y)\leq c_\d(\d(x,z)+\d(z,y))$. In order to simplify our notation we will assume in what follows that $\d$ is symmetric and satisfies a quasi-triangle inequality with a possibly larger constant $c_\d.$ We can easily achieve that by symmetrizing $\d(x,y)$; thus we will have
\[
\d(x,y)=\d(y,x),\qquad \d(x,y)\leq c_\d [\d(x,z)+\d(z,y)],\qquad x,y,z\in\mathbb X.
\]
We denote by 
\[
B(x,r)\coloneqq \{y\in {\mathbb{X}}:\, \d(x,y)<r\}
\]
the quasi-metric ball with radius $r$, centered at $x$, and assume without loss of generality that the balls $B(x,r)$ are open in the sense that for all $x\in {\mathbb{X}},r>0$ and  $x'\in B(x,r)$ there exists $r'>0$ such that $B(x',r')\subset B(x,r)$. Throughout this paper, if $B=B(x,r)$, we denote by $\alpha B$ the ball with same center and $\alpha$-times the radius, namely $\alpha B\coloneqq B(x,\alpha r)$. We will assume that each ball $B$ in ${\mathbb{X}}$ comes with a fixed center $c_B$, and radius $r_B$, although these are not necessarily uniquely determined by $B$. We say that a Borel measure $|\cdot|$ on ${\mathbb{X}}$ is $(\alpha,\beta)$-\emph{doubling} for some $\alpha,\beta> 1$ if for all balls $B$
\[
|\alpha B| \leq \beta  |B|.
\]  
If $|\cdot|$ is $(\alpha,\beta)$-\emph{doubling} for some $\alpha,\beta> 1$ then we refer to the triple $({\mathbb{X}},\rho,|\cdot|)$ as a \emph{space of homogeneous type}. We simply write $L^p$ for $L^p({\mathbb{X}},\rho,|\cdot|)$ and of course all $\mathrm{d} x$ integrations that appear in this paper are with respect to $|\cdot|$. Throughout the paper we will write $L^p$-averages, with respect to the underlying measure and some (metric) ball $B$ as
\[
\l f\r_{p,B}\coloneqq \bigg(\frac{1}{|B|}\int_B |f|^p\bigg)^\frac{1}{p}\eqqcolon \bigg(\intav_B |f|^p\bigg)^\frac{1}{p}.
\]
Without loss of generality we can and will assume throughout the paper that all our doubling measures are $(2,\beta)$-doubling for some $\beta>1$. We will always assume that $\mathbb{X}$ is geometrically doubling, see Definition~\ref{def:geomdoub} and the discussion in \S\ref{def:geomdoub} for further details.

\subsection{An operator which is a sum of single scale pieces}\label{s.setup} This paragraph  details the environment and standing assumptions for our main results. Let $({\mathbb{X}},\d,|\cdot|)$ be a fixed space of homogeneous type and write $\mathrm{Lip}({\mathbb{X}})$ for the Lipschitz functions on $\mathbb X$. Consider a linear operator $T$, initially defined on all $f\in \mathrm{Lip}({\mathbb{X}})$ with compact support, and assume that $T$ can be written formally as a sum $T\sim \sum_{s\in \Z} T(s) $ where each $T(s)$ is a linear operator. The reader is encouraged to think of $T(s)$ as being a --possibly singular-- average at scale $2^s$.  For $\sigma,\tau\in\Z$   set
\[
T_\sigma ^\tau f(x)\coloneqq \sum_{ \sigma \leq s<  \tau}  [T(s) f](x),\qquad x\in\mathbb X,
\]
with the understanding that $T_{\sigma} ^\tau f\equiv 0$ if $\sigma\geq \tau$. We assume that $\d$ localizes the operators $T_{\sigma} ^\tau$ in this sense: there exists a constant $c_o\geq 1$ such that for all balls $L$ in ${\mathbb{X}}$ with $r_L=2^{s_L}$ and $\sigma \leq s\leq s_L$, there holds
\begin{equation} \label{eq:localization}
\supp ({T_\sigma ^{s}} [f\ind_L]) \subset c_o L.
\end{equation}
We make the quantitative assumption that
\begin{equation}\label{eq:unifbdd}
\sup_{\sigma<\tau}  \| T_{\sigma}^\tau \|_{p\to p} \eqqcolon C_p <\infty
\end{equation}
for some $1<p<\infty$.

\begin{remark}\label{rmrk:represent} In this abstract setup the operator $T$ is not currently well defined which is why the vague notation $T\sim \sum_{s}T(s)$ is used. The question of whether and how the infinite sum converges to $T$ is unspecified. In applications, we will actually start with a  concrete operator $T$, discretize it as a sum over scales $\sum_s T(s)$, and try to recover $T$ as a weak limit of the truncated sums. Typically what happens is that the weak limit of the truncated sums can differ from the original operator $T$ by a pointwise multiplication operator; see \cite[\S I.7.2]{stein}. Thus, for the purposes of formulating an abstract theorem we will assume the uniform bound \eqref{eq:unifbdd} for the truncations and realize $T$ in the form
\begin{equation}\label{eq:T}
\l Tf,g\r = \l T_0f,g\r +\l mf,g\r,
\end{equation}
where $m\in L^\infty(\mathbb X)$ and there exist sequences $\sigma_j\to -\infty,\tau_k\to +\infty$ such that for all $f,g\in  \mathrm{Lip}({\mathbb{X}})$ with compact support we have
\[
\lim_{j,k} \l T_{\sigma_j} ^{\tau_k} f,g\r = \l T_0 f,g\r.
\]
Thus, up to taking subsequences, we can always think of $\hspace{.1em}T$ as being the weak limit of the truncations $T_{\sigma} ^\tau$ modulo a pointwise multiplication operator by a bounded function $m$ and in view of \eqref{eq:T} we will have $\|T_0\|_{p\to p}+\|m\|_\infty\lesssim C_p$. We will come back to that point in the proof of the main theorem, Theorem~\ref{th:SD}, in \S\ref{s.together}.
\end{remark}
 
Next is the reformulation of the  $(p_1,p_2')$-improving assumption  in the context of metric measure spaces of homogeneous type. For this formulation it is convenient to introduce the concept of $(p,r)$-atoms.

\begin{definition}\label{d.molecule} Let $p\geq 1, r>0$, and $B\subset \mathbb X\,$ be a ball  of radius $r$. We say that $b\in L^p(\mathbb X)$ is a \emph{$(p,r)$-atom supported on $B$},  if \[
\mathrm{supp}\, b\subset B,\qquad \int b=0.\]  
\end{definition}

The term \emph{modulus of continuity} will refer to an increasing continuous function $\omega:[0,1)\to[0,\infty)$ such that $\lim_{t\to 0^+}\omega(t)=\omega(0)=0$. In the context of singular integral operators, a prominent role is played by  the \emph{Dini moduli of continuity} satisfying 
\[
\|\omega\|_{\mathrm{Dini}}\coloneqq\int_0^1  \omega( \delta) \, \frac{\d \delta}{\delta} <\infty.
\]

\begin{definition}\label{d.LPimprove} Let $p_1,p_2\in[1,\infty]$ with $p_2 '\geq p_1$ and $s\in\Z$. We say that $T\sim\sum_\ell T(\ell)$ is $(p_1,p_2 ')$-improving  at scale $s$ with constant $I_{p_1,p_2}$ and modulus $\omega$ if there exist constants $\gamma_1,\gamma_2\geq 1$ such that: \vskip1mm
\noindent a.  
 for all $f\in L^{\infty}(\mathbb X)$ and all balls $L$ with radius $2^{s}\leq r_L \leq \gamma_1 2^s$, we have
\begin{equation}\label{eq:Lpimpss}
\left\langle T(s)   (f\ind_L)  \right\rangle_{p_2',\gamma_2 L} \leq I_{p_1,p_2}    \l f \r_{p_1, L}; 
  \end{equation}
  \vskip1mm
\noindent b.  for all $f\in L^{\infty}(\mathbb X)$, all balls $L$ with radius $2^{s}\leq r_L \leq \gamma_1 2^s$, and all $(p_2,r)$ atoms $b$ with    $r\leq 2^s$, there holds
\begin{equation}\label{Lpimp}
|\l T(s)   (f\ind_L), b  \r| \leq\omega \left({\textstyle\frac{r}{2^s}}\right) |L|  \l f \r_{p_1, L}   \l b  \r_{p_2,\gamma_2 L}.
\end{equation} 
\end{definition}

\subsection{Main results}\label{sec:mainres} Given $\zeta\in(0,1)$, say that the collection of measurable sets $\mathcal{A}$ is \emph{$\zeta$-sparse} if it is countable and for all $A \in \mathcal A$ there exists a set $E_A\subset A$ with \[|E_A|>\zeta |A|,  \qquad A, A'\in \mathcal A, \, E_A \cap E_{A'}\neq \varnothing \implies A=A'.\]  A collection $\mathcal A$ is called \emph{sparse} if it is $\zeta$-sparse for some fixed $\zeta\in(0,1)$. In words, a collection of sets is sparse if it is pairwise disjoint up to passing to a major subset.
Sparse collections are featured in the first main result of the article.

\begin{theorem} \label{th:SD} Let $({\mathbb{X}},\d,|\cdot|)$ be a space of homogeneous type. Let $1\leq p_1\leq p_2 '\leq \infty$,  $\omega$ be a  Dini modulus of continuity and let $T$ be a linear operator on $({\mathbb{X}},\d,|\cdot|)$ satisfying structural assumption  \eqref{eq:localization}. Furthermore, assume:
\begin{itemize}
\item[1.] estimate
 \eqref{eq:unifbdd} holds for some  $1<p<\infty$ with constant $C_{p}$;
\item[2.] $T$ is $(p_1,p_2')$-improving at every scale $s\in\Z$ with uniform constant $I$ and    modulus $\omega$; 
\item[3.] $T^*$ is $(p_2,p_1')$-improving  at every scale $s\in\Z$ with uniform  constant $I^*$ and    modulus $\omega$. 
\end{itemize}
 Then, for all $f_1,f_2\in \mathrm{Lip}({\mathbb{X}})$ with compact support and every $\sigma,\tau\in\Z$ with $\sigma<\tau$ there exists a sparse collection $\mathcal S_{\sigma,\tau}$ consisting of $\d$-balls $B$ with $2^{\sigma}\leq r_B \leq 2^{\tau}$ such that
\[
|\l T_{\sigma} ^\tau f_1, f_2 \r| \lesssim \left(C_{p}+ I + I^*  +\|\omega\|_{\mathrm{Dini}}\right) \sum_{B \in \mathcal S_{\sigma,\tau}} |B| \l f_1 \r_{p_1,c_1B}  \l f_2 \r_{p_2, c_1B},
\]
where $c_1$ is a fixed constant depending on the homogeneous structure of $\,\mathbb X$ and $c_o$ of \eqref{eq:localization}.

Furthermore if $\,T$ is defined through \eqref{eq:T}  then for all $f_1,f_2\in \mathrm{Lip}({\mathbb{X}})$  there exists a sparse collection $\mathcal B$ consisting of $\d$-balls such that
\[
|\l Tf_1, f_2 \r| \lesssim \left(C_{p}+ I + I^*  +\|\omega\|_{\mathrm{Dini}}\right) \sum_{B \in \mathcal S} |B| \l f_1 \r_{p_1,B}  \l f_2 \r_{p_2, B}.
\]
The implicit constants depend on the homogeneous metric structure of $({\mathbb{X}},\d,|\cdot|)$ and the constant in \eqref{eq:localization} but are independent of $\sigma,\tau,f_1,f_2$.
\end{theorem}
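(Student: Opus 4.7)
The plan is to run a stopping-time/iterative construction of the sparse family, driven by the localization \eqref{eq:localization}, the atomic improving estimate \eqref{Lpimp}, and the Whitney-type construction of Lemma \ref{lem:tedious}. First I would use \eqref{eq:localization} to reduce matters to an initial ball $L_0$ of scale comparable to $2^\tau$ with $c_o L_0$ containing the supports of $f_1,f_2$, and estimate the bilinear form $\langle T_\sigma^\tau(f_1\ind_{L_0}),f_2\ind_{L_0}\rangle$ recursively. At a generic step with a ``top'' ball $L$ of scale $s_L$, I would introduce the exceptional set $E_L\subset L$ defined as the union of three stopping sets: (i) the $L^{p_1}$-maximal average of $f_1$ exceeds a large multiple of $\langle f_1\rangle_{p_1,c_1L}$, (ii) the analogous condition for $f_2$ with $p_2$, and (iii) a ``principal'' control $\sup_{\sigma\leq s\le s_L}|T_\sigma^s(f_1\ind_L)|$ exceeds a large multiple of $\langle f_1\rangle_{p_1,c_1L}$. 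Using \eqref{eq:unifbdd} together with the maximal-function variant of the $(p_1,p_2')$-improving hypothesis, each of (i)--(iii) occupies an arbitrarily small proportion of $|L|$ once the thresholds are chosen proportional to $C_p+I+I^*$; hence $|E_L|\leq \tfrac12|L|$. Applying Lemma \ref{lem:tedious} to $E_L$ produces the children $\{L_j\}$ covering $E_L$, and the sparseness of the global tree follows from $|E_L|\leq \tfrac12|L|$.

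The heart of the argument is to bound the ``good'' single-step contribution
\[
\Big|\big\langle T_\sigma^{s_L}(f_1\ind_L),\,f_2\ind_{L\setminus\bigcup_j c_oL_j}\big\rangle\Big|
\lesssim \bigl(I+I^*+\|\omega\|_{\mathrm{Dini}}\bigr)\,|L|\,\langle f_1\rangle_{p_1,c_1L}\,\langle f_2\rangle_{p_2,c_1L}.
\]
For this I would decompose scale by scale: for each $\sigma\leq s<s_L$, perform a Whitney decomposition of $L\setminus \bigcup_j c_oL_j$ at scale $2^s$, and split $f_2\ind_{L\setminus\bigcup_jc_oL_j}$ into its Whitney averages plus mean-zero pieces, which are $(p_2,C2^s)$-atoms. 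On the atomic pieces the refined improving bound \eqref{Lpimp} yields a gain $\omega(r/2^s)$ summing, after integration in $s$, to $\|\omega\|_{\mathrm{Dini}}$ via the standard Dini telescoping. On the averaged pieces the scale-invariant bound \eqref{eq:Lpimpss} produces a genuine $L^{p_1}\to L^{p_2'}$ estimate absorbed against $\langle f_1\rangle_{p_1,c_1L}\,\langle f_2\rangle_{p_2,c_1L}$ by the stopping conditions (i)--(ii). The symmetric hypothesis on $T^*$ is invoked to handle the ``upper'' part of the scale decomposition, where one instead decomposes $f_1$ from the outside: this is where the constant $I^*$ enters.

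After iterating, the resulting tree of balls is sparse with pairwise disjoint major subsets $E_L\setminus\bigcup_j L_j$, and summing the single-step bounds over the tree yields the desired sparse form for $\langle T_\sigma^\tau f_1,f_2\rangle$ uniformly in $\sigma,\tau$. For the final statement concerning $T$ itself, I would invoke Remark \ref{rmrk:represent}: one passes to the weak limit in $\sigma,\tau$ using that the sparse bounds are uniform and that the sparse family for each $(\sigma,\tau)$ consists of balls of radii in the bounded range $[2^\sigma,2^\tau]$; a standard diagonal/compactness extraction on the tree of balls produces a single sparse family $\mathcal{S}$ controlling $\langle T_0f_1,f_2\rangle$, and the pointwise multiplier $m$ is absorbed by a single term corresponding to the root ball.

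The main obstacle I anticipate is the bookkeeping of the Whitney atomic decomposition against the dilation constants $\gamma_1,\gamma_2,c_o$ appearing in \eqref{eq:Lpimpss}--\eqref{Lpimp}: the atoms on $f_2$ at scale $2^s$ must live on balls that fit inside $\gamma_2L$, and their supports must avoid $\bigcup_j c_oL_j$ to preserve localization at scale $s$. Handling this uniformly in $s$ requires the geometric doubling hypothesis on $\mathbb X$, together with careful choice of $c_1$ depending on $c_o$, $\gamma_1$, $\gamma_2$ and the quasi-metric constant $c_\d$; once this constant is fixed, the subsequent telescoping over $s$ against $\omega$ is routine and produces the $\|\omega\|_{\mathrm{Dini}}$ factor.
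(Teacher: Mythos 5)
Your overall architecture — iterative stopping balls via the Whitney machinery of Lemma~\ref{lem:tedious}, exceptional sets occupying a small fraction of the parent, telescoping against $\|\omega\|_{\mathrm{Dini}}$, weak limit for the untruncated $T$ via Remark~\ref{rmrk:represent} — tracks the paper's strategy. However, there are two substantive gaps.

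First, your stopping condition~(iii), which removes the superlevel set of $\sup_{\sigma\leq s\le s_L}|T_\sigma^s(f_1\ind_L)|$, requires a weak-type bound for the \emph{maximal} truncation. You propose to obtain this from \eqref{eq:unifbdd} ``together with the maximal-function variant of the $(p_1,p_2')$-improving hypothesis,'' but no such maximal-function hypothesis is assumed in Theorem~\ref{th:SD}: \eqref{eq:unifbdd} only gives a uniform $L^p$ bound on each individual truncation $T_\sigma^\tau$, which does not imply a weak $(p,p)$ bound on $\sup_s|T_\sigma^s|$ in this abstract setting (absent a Cotlar-type inequality, which would need kernel structure). The paper avoids this entirely: the exceptional sets in Lemma~\ref{lem:tedious} are defined solely in terms of the maximal functions $\mathrm{M}_{p_i}f_i$, with the operator $T$ appearing nowhere in the stopping construction, and the operator-dependent estimates are deferred to the single-step bound (Lemma~\ref{lemma:iter}) via a Calder\'on--Zygmund decomposition of \emph{both} $h_1$ and $h_2$.

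Second, your claimed single-step bound carries only the constant $I+I^*+\|\omega\|_{\mathrm{Dini}}$, yet $C_p$ must appear in the final sparse estimate. In the paper, the $C_p$ factor arises precisely in the ``short scales'' contribution — the truncations $T_\sigma^{s_B\vee\sigma}$ applied to the good parts $g_1,g_2$ of the CZ decomposition — and this works because the stopping form \eqref{e:stofo} cuts off the scale of $T$ at $s_B$ on each child $B$, so that the recursion \eqref{e:trickster} telescopes exactly. Your single-step quantity $\langle T_\sigma^{s_L}(f_1\ind_L),f_2\ind_{L\setminus\bigcup_jc_oL_j}\rangle$ does not have this scale cutoff on the children, and it is unclear how to close the recursion without it: the remaining piece $\langle T_\sigma^{s_L}(f_1\ind_L), f_2\ind_{\bigcup_jc_oL_j}\rangle$ still contains all scales down to $\sigma$, including scales larger than $s_{L_j}$, which would not be absorbed into the next iterate $\langle T_\sigma^{s_{L_j}}(\cdot),\cdot\rangle$. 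Your proposed scale-by-scale Whitney decomposition of $f_2$ into atoms plus averages is also different from the paper's single CZ decomposition at the children's scales; it may be salvageable, but as sketched it produces a logarithmic loss (one averaged-piece contribution of size $I\langle f_1\rangle\langle f_2\rangle$ per scale) that you would need to explain away.
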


\subsubsection{A maximal version} Below follows a variation of Theorem~\ref{th:SD}  providing a sparse domination result for abstract maximal operators in metric spaces of homogeneous type. In order to set it up we consider again an abstract sequence of linear operators $\{T(s)\}_{s\in \Z}$. Assume the localization condition: there exists a constant $c_0\geq 1$ such that for every metric ball $L$ with $r_L=2^{s_L}$
\begin{equation}\label{eq:localmax}
\mathrm{supp}(T(s)[f\ind_L])\subset c_o L\qquad \forall  s\leq s_L.
\end{equation}
We consider the maximal operator
\[
T_\star f(x)\coloneqq \sup_{s\in\Z} |T(s)f(x)|,\qquad x\in\mathbb X.
\]

\begin{theorem} \label{th:max} Let  $1\leq p_1\leq p_2' \leq \infty$,  $\{T(s)\}_{s\in\Z}$ be a sequence of linear operators satisfying \eqref{eq:localmax} and such that $T_\star$ is bounded on $L^\infty(\mathbb X)$. Assume that  for each $s\in\Z$ the operator $T^*(s)$ is $(p_2,p_1')$-improving at scale $s$ with  constant $I^*$  uniformly in $s$ and  a Dini modulus of continuity $\omega$. Then, for all $f_1,f_2$ bounded functions with compact support  and $\sigma,\tau\in\Z$ with $\sigma<\tau$ there exists a sparse collection $\mathcal S_{\sigma,\tau}$ consisting of $\d$-balls $B$ with $2^\sigma \leq r_B\leq 2^\tau$ such that
\[
\left|\left\langle \sup_{\sigma\leq s< \tau}|T(s) f_1|, f_2 \right\rangle\right| \lesssim \left(\|T_\star\|_{L^\infty(\mathbb X)}+I^* +\|\omega\|_{\mathrm{Dini}}\right) \sum_{B \in \mathcal S_{\sigma,\tau}} |B| \l f_1 \r_{p_1,c_1B}  \l f_2 \r_{p_2, c_1B},
\]
where $c_1$ is a fixed constant depending on the homogeneous structure of $\,\mathbb X$ and $c_o$ of \eqref{eq:localization}.

Furthermore for all $f_1,f_2$ bounded functions with compact support there exists a sparse collection $\mathcal S$ consisting of $\d$-balls such that
\[
|\l T_\star f_1, f_2 \r| \lesssim \left(\|T_\star\|_{L^\infty(\mathbb X)}+I^* +\|\omega\|_{\mathrm{Dini}}\right)  \sum_{B \in \mathcal S } |B| \l f_1 \r_{p_1,B}  \l f_2 \r_{p_2, B}.
\]
The implicit constants above depend on the homogeneous metric structure of $({\mathbb{X}},\d,|\cdot|)$ and the constant in \eqref{eq:localmax}, but are independent of $\sigma,\tau,f_1,f_2$.
\end{theorem}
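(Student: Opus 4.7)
The proof follows the overall scheme of Theorem \ref{th:SD} with two principal modifications reflecting the maximal nature of the operator. First, the supremum defining $T_\star$ must be linearized. Second, the uniform $L^p$ bound on truncations available in Theorem \ref{th:SD} is replaced by the $L^\infty\to L^\infty$ boundedness of $T_\star$, which will be used only through its action on a bounded ``good'' function produced by a Calder\'on-Zygmund decomposition. Concretely, select measurable $s_1:\mathbb X\to\{\sigma,\ldots,\tau-1\}$ and a unimodular measurable $\eta$ on $\mathbb X$ realizing the supremum on $\supp f_2$, and set $\tilde Tf_1(x):=\eta(x)T(s_1(x))f_1(x)$, so that $\l\sup_{\sigma\leq s<\tau}|T(s)f_1|,f_2\r\leq 2|\l\tilde Tf_1,f_2\r|$. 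Thanks to \eqref{eq:localmax} and compact support of $f_1,f_2$ we may work inside a fixed initial ball $L_0$ with $s_{L_0}\geq\tau$.

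\emph{Sparse tree and space-scale partition.} Construct principal balls recursively as in Theorem \ref{th:SD}: for each principal $L$, use Lemma \ref{lem:tedious} to extract a pairwise disjoint family of principal children $\{L_j\}\subset L$ covering
\[
\Omega_L=\bigl\{x\in L:\M_{p_1}f_1(x)>A\l f_1\r_{p_1,c_1L}\bigr\}\cup\bigl\{x\in L:\M_{p_2}f_2(x)>A\l f_2\r_{p_2,c_1L}\bigr\},
\]
where $\M_{p}$ is the Hardy--Littlewood $p$-maximal operator over $\d$-balls. For $A$ large enough, $\sum_j|L_j|\leq\tfrac12|L|$, yielding a $\tfrac12$-sparse collection $\mathcal S_{\sigma,\tau}$. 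Each pair $(x,s)$ with $x\in L_0,\,\sigma\leq s<\tau$ is assigned to the unique principal $L$ that is the smallest element of $\mathcal S_{\sigma,\tau}$ containing $x$ with $s_L\geq s$; this produces the partition
\[
\l\tilde Tf_1,f_2\r=\sum_{L\in\mathcal S_{\sigma,\tau}}I_L,\qquad I_L:=\int_L\ind_{R_L}(x,s_1(x))\,T(s_1(x))f_1(x)\,\overline{f_2(x)}\,\eta(x)\,dx,
\]
where $R_L$ consists of pairs $(x,s)$ with $x\in L$, $s\leq s_L$, and $s>s_{L_j}$ whenever $x$ lies in a principal child $L_j$.

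\emph{Per-ball estimate and main obstacle.} For each principal $L$, perform the decomposition $f_1\ind_{c_oL}=h_L+\sum_jb_{L_j}$ with $b_{L_j}:=(f_1-\l f_1\r_{1,L_j})\ind_{L_j}$ cancellative and $\|h_L\|_\infty\lesssim A\l f_1\r_{p_1,c_1L}$ by the stopping condition. The contribution of $h_L$ to $I_L$ is dominated pointwise by $T_\star h_L\leq\|T_\star\|_{L^\infty}\|h_L\|_\infty$, which integrated against $|f_2|\ind_L$ gives $\lesssim\|T_\star\|_{L^\infty}A|L|\l f_1\r_{p_1,c_1L}\l f_2\r_{p_2,c_1L}$. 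For the cancellative part, dualize one scale at a time: for each $s$ with $s_{L_j}<s\leq s_L$,
\[
\int\ind_{\{s_1=s\}}(x)\,T(s)b_{L_j}(x)\,\overline{f_2(x)}\,\eta(x)\,dx=\bigl\l b_{L_j},\,T^*(s)\bigl[(\bar f_2\eta)\ind_{\{s_1=s\}}\bigr]\bigr\r,
\]
and apply the atomic $(p_2,p_1')$-improving property \eqref{Lpimp} of $T^*(s)$ on a concentric ball $\tilde L_s$ of radius $\sim 2^s$ containing $L_j$; this yields $I^*\omega(2^{s_{L_j}-s})|\tilde L_s|\l f_2\r_{p_2,\tilde L_s}\l b_{L_j}\r_{p_1,\gamma_2\tilde L_s}$. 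The delicate point, and the main technical obstacle, is to sum these estimates over $s\in\{s_{L_j}+1,\ldots,s_L\}$ and then over $j$: the sum in $s$ collapses to $\|\omega\|_{\mathrm{Dini}}$ via the Dini condition, while the sum in $j$ demands trading the growing measure $|\tilde L_s|$ against $|L_j|$ through the doubling property, exploiting disjointness of the children $\{L_j\}$, and reducing the averages of $f_2$ and $f_1$ over the enlargements $\tilde L_s\subset cL$ to $\l f_2\r_{p_2,c_1L}$ and $\l f_1\r_{p_1,c_1L}$ via the stopping condition. The resulting per-ball bound $|I_L|\lesssim(\|T_\star\|_{L^\infty}+I^*+\|\omega\|_{\mathrm{Dini}})|L|\l f_1\r_{p_1,c_1L}\l f_2\r_{p_2,c_1L}$, summed over $L\in\mathcal S_{\sigma,\tau}$, proves the truncated assertion; the statement for $T_\star$ itself follows by monotone convergence as $\sigma\to-\infty,\tau\to+\infty$.
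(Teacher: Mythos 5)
Your outline follows the same route as the paper: linearize the supremum (the paper does this per stopping ball inside the stopping form via the sets $A_s$ and signs $\eps_s$, you do it globally up front via $s_1(x)$, $\eta(x)$), build a sparse tree of balls by Whitney-type stopping (the paper's Lemma~\ref{lem:tedious}), perform a Calder\'on--Zygmund decomposition of $f_1$ alone, control the good part by $\|T_\star\|_{L^\infty\to L^\infty}$, and dualize the bad part scale-by-scale to invoke the atomic $(p_2,p_1')$-improving property of $T^*(s)$. These are precisely the steps of the paper's Lemma~\ref{lemma:iter2} (the maximal analogue of Lemma~\ref{lemma:iter}), whose conclusion is then fed into the compilation procedure of Subsection~\ref{s.together} verbatim.

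However, what you flag as ``the delicate point, and the main technical obstacle'' — the double summation over scales $s$ and stopping children $j$, and the reduction of the averages $\l f_2\r_{p_2,\tilde L_s}$ and $\l b_{L_j}\r_{p_1,\gamma_2\tilde L_s}$ over the intermediate concentric balls $\tilde L_s$ to the stopping data — is exactly the content of the proof of Lemma~\ref{lemma:iter2}, and in the paper it is far from a routine verification. It is managed through the stopping norm $\|h\|_{p,(L,\mathcal B)}$ introduced in \eqref{e:stop}, the observation $\sup_{s}\l h_2\r_{p_2,c_o B'(s)}\lesssim\|h_2\|_{p_2,(L,\mathcal B)}$ proved via a case analysis on whether a Whitney ball is captured by $qB'(s)$ or not, the finite-overlap and packing properties of the Whitney cover, and the splitting $T^{s_L}_{s_B\vee\sigma}=T^{s_L}_{s_{B'}\vee\sigma}+\mathrm{sign}(\cdot)T^{s_B\vee s_{B'}}_{(s_B\wedge s_{B'})\vee\sigma}$ to separate the Dini-summable tail from the finitely many comparable-scale neighbours $N_{\mathsf{lft}}(B')$. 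Also note that the stopping children produced by a Whitney cover are \emph{finitely overlapping}, not disjoint, so ``exploiting disjointness'' must be replaced by the bounded-overlap and packing estimates $\sum_{B'\in\mathcal B}|B'|\lesssim|L|$. Without carrying out this accounting, the proof is a plan rather than a proof; the plan, to be clear, is the right one and coincides with the paper's.
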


Theorems \ref{th:SD} and \ref{th:max} above have a partial converse which in several concrete realizations becomes a full converse. For the abstract setup we content ourselves with stating the following proposition with a stronger statement coming up in Lemma~\ref{lem:thenewlemma} of the next section.

\begin{proposition}\label{prop:partial} Suppose that for each $\sigma,\tau\in\Z$ with $\sigma< \tau$ the operator $T\sim \sum_s T(s)$ satisfies the localization property \eqref{eq:localization} and the conclusion of Theorem~\ref{th:SD} or \eqref{eq:localmax} and the conclusion of Theorem~\ref{th:max}. Then for every ball $L$ with $r_L=2^s$ we have
	\[
	\l T(s)(f\ind_L)\r_{p_2 ',L} \leq \l f\r_{p_1,L}
	\]
with implicit constant depending on the localization properties of $\hspace{.2em}T$ and the constants in the sparse domination assumption but not on $L$ or $s$.
\end{proposition}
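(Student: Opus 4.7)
The plan is to test the sparse bound at the degenerate range $\sigma = s$, $\tau = s+1$, for which $T_{\sigma}^{\tau} = T(s)$ is a single term. Fix a ball $L$ with $r_L = 2^s$ and $f$ with $\supp f \subset L$. By $L^{p_2}$--$L^{p_2'}$ duality on the probability space $(L, \d x/|L|)$, it suffices to estimate $|L|^{-1}|\l T(s)(f \ind_L), g\r|$ by $\l f\r_{p_1, L}\, \l g\r_{p_2, L}$ for every $g$ supported on $L$; after a standard density argument one may assume $f \ind_L$ and $g$ are Lipschitz. Applying Theorem~\ref{th:SD} with this degenerate range then produces a sparse collection $\mathcal S_{s, s+1}$ of balls $B$ with $r_B \in [2^s, 2^{s+1}]$ such that
\[
\bigl|\l T(s)(f\ind_L), g\r\bigr| \lesssim \sum_{B \in \mathcal S_{s, s+1}} |B|\, \l f \ind_L\r_{p_1, c_1 B}\, \l g\r_{p_2, c_1 B}.
\]

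Since both $f \ind_L$ and $g$ vanish outside $L$, only balls with $c_1 B \cap L \neq \varnothing$ contribute, and combined with $r_B \sim r_L$ the quasi-triangle inequality forces $c_1 B \subset K L$ for some constant $K$ depending only on $c_1$ and $c_\d$. Doubling then yields $|c_1 B| \sim |L|$, from which the crude bounds
\[
\l f\ind_L\r_{p_1, c_1 B} \lesssim \l f\r_{p_1, L}, \qquad \l g\r_{p_2, c_1 B} \lesssim \l g\r_{p_2, L}
\]
follow by extending integration from $c_1 B \cap L$ to $L$ and invoking the volume comparison.

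The one substantive step is the verification that the number of contributing balls is bounded. This is precisely where sparsity intervenes: each contributing $B$ carries a major subset $E_B \subset B$ with $|E_B| \ge \zeta |B| \sim \zeta |L|$, the $E_B$ are pairwise disjoint, and they all sit inside $KL$, which has measure $\lesssim |L|$ by doubling. Consequently the number of contributing balls is bounded by a constant depending only on $\zeta$ and the doubling constant $\beta$. Summing and dividing by $|L|$ gives the desired estimate.

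The maximal case follows by the identical argument with Theorem~\ref{th:max} in place of Theorem~\ref{th:SD}: for $\sigma = s$, $\tau = s+1$ the supremum on the left-hand side of the maximal sparse inequality collapses to the single term $|T(s)(f \ind_L)|$, and the same duality step---now with a nonnegative $g$ that realizes the $L^{p_2'}$-norm of $T(s)(f \ind_L)$ on $L$---recovers the desired average. I do not anticipate any obstacle beyond the counting argument above, which is entirely routine given the geometric doubling structure already built into the setup of Section~\ref{s:mr}.
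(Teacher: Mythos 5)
Your proposal is correct and takes essentially the same route as the paper: test the sparse bound at the degenerate range $\sigma=s$, $\tau=s+1$, use the support restrictions on $f\ind_L$ and $g$ together with $r_B\sim r_L$ and doubling to reduce all the averages to averages over $L$, and conclude by duality. The paper's proof is terser (it compresses the counting step — pairwise disjoint $E_B$ of comparable measure inside a fixed dilate of $L$ — into the phrase ``the doubling assumptions then imply''), whereas you spell it out explicitly; the content is the same.
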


\begin{proof}	 Using the existence of a sparse bound in the form of either Theorem~\ref{th:SD} or Theorem~\ref{th:max} we conclude that for each $s\in \Z$ there is a sparse collection $\mathcal B$ consisting of balls of radius $2^s$ such that
	\[
	|\l T(s)(f\ind_L),g\r|\lesssim \sum_{B\in\mathcal B} |E_B|\l f\ind_L\r_{p_1,B}\l g\ind_{c_oL}\r_{p_2,B}
	\]
with $\{E_B\}_{B\in\mathcal B}$ disjoint. The above estimate and the doubling assumptions on $(\mathbb X,\d,|\cdot|)$ then imply that
	\[
	|\l T(s)(f\ind_L),g\r|\lesssim  |L|\l f\ind_L\r_{p_1,L}\l g\ind_{c_oL}\r_{p_2,L}
	\]
which by duality yields the desired conclusion.
\end{proof}

Concrete realizations of Theorem \ref{th:SD} and Theorem~\ref{th:max} are postponed to Section \ref{sec:Lpimproving}, where the $(p_1,p_2')$ improving condition is suitably reinterpreted in a more familiar form. Here, we point out that Theorem \ref{th:SD} yields as a corollary quantitative weighted norm inequalities of $A_{p}\cap RH_q$ type for the operator $T$. This theme has recently been pursued for several classes of operators within and beyond the scope of Calder\'on-Zygmund theory in the Euclidean setting, see for instance \cite{BFP,CoCuDPOu,CuDPOu,LSMF,Ler2015}.

We briefly recall the definition of $A_p$ weights in the context of quasimetric measure space of homogeneous type $(\mathbb X,\d,|\cdot|)$.  These are locally integrable non-negative functions $w$ such that
\[
[w]_{A_p}\coloneqq \sup_B \l w\r_{B}\l w^{-\frac{1}{p-1}}\r_{B} ^{p-1}<\infty,\qquad 1<p<\infty,
\]
where the supremum is taken over all $\d$-balls and all the integrations are with respect to the doubling measure $|\cdot|$. For $p=1$, define $[w]_{A_1}$ to be the smallest constant $c>0$ such that for all metric balls $B$ 
\[
\l w\r_{1,B}\leq c \inf_B w
\]
The \emph{Reverse H\"older class} $\mathrm{RH}_p$ is defined for $1< p <\infty$ as the class of non-negative locally integrable functions $w$ on $\mathbb X$ such that
\[
[w]_{\mathrm{RH}_p}\coloneqq \sup_B \frac{\l w\r_{p,B}}{\l w\r_{1,B}}<\infty.
\]
The proof of the following weighted estimate is an easy consequence of the sparse domination result of Theorem~\ref{th:SD}; see for example \cite[\S6]{BFP} or \cite{CDPOUBP}. A similar corollary holds for the maximal version $T_\star(f)\coloneqq \sup_{s\in\Z}|T(s)f|$; we omit the details.
\begin{corollary} Let $T\sim \sum_{s\in\Z}T(s)$ in the sense of \eqref{eq:T} and assume that $T$ satisfies the assumptions of Theorem~\ref{th:SD} for $1\leq p_1< p_2'<\infty$. Then for any $p_1<p<p_2'$ and $w\in A_{p/p_1}\cap \mathrm{RH}_{p_2'/p}$ we have the following weighted norm estimate
\[
\|T:L^p(w)\to L^p(w)\| \lesssim \bigg([w]_{A_{\frac{p}{p_1}}}[w]_{\mathrm{RH}_{\big(\frac{ p' _2 }{p}\big)'}}\bigg)^{\max\big(\frac{1}{p-p_1},\frac{p_2' -1}{p_2 ' - p}\big)}.
\]
The implicit constant depends on the assumptions for $T$, the homogeneous structure of $(\mathbb X,\d,|\cdot|)$ and the indices $p_1,p_2,p$.
\end{corollary}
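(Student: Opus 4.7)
The hypotheses of Theorem~\ref{th:SD} are assumed to hold for $T$, so the plan is to use that theorem as a black box and deduce the claim via the by-now-standard passage from sparse forms to weighted inequalities. Fix $f,g\in\mathrm{Lip}(\mathbb X)$ of compact support, with $g$ in the unit ball of $L^{p'}(\sigma)$, where $\sigma\coloneqq w^{1-p'}$ is the dual weight. Theorem~\ref{th:SD} supplies a sparse collection $\mathcal S$ of $\d$-balls such that
\[
|\langle Tf,g\rangle|\lesssim \sum_{B\in\mathcal S} |B|\, \langle f\rangle_{p_1,B}\langle g\rangle_{p_2,B}.
\]
Density of $\mathrm{Lip}(\mathbb X)\cap L^\infty_c$ in $L^p(w)$ and $L^{p'}(\sigma)$, together with the duality $\|Tf\|_{L^p(w)}=\sup\{|\langle Tf,g\rangle|:\|g\|_{L^{p'}(\sigma)}\leq 1\}$, reduces the proof to bounding the bilinear sparse form on the right-hand side by the claimed multiple of $\|f\|_{L^p(w)}\|g\|_{L^{p'}(\sigma)}$.

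\textbf{From the sparse form to Hardy-Littlewood maximal functions.} The next step is the standard pointwise control of the sparse form by an integral of maximal operators, which does not use anything specific about $T$. Denoting by $E_B\subset B$ the disjoint major subsets from the definition of sparseness and setting $M_q^{\mathrm{HL}}h\coloneqq (M^{\mathrm{HL}}(|h|^q))^{1/q}$, with $M^{\mathrm{HL}}$ the uncentered ball-averaging Hardy-Littlewood maximal operator on $(\mathbb X,\d,|\cdot|)$, one has
\[
\sum_{B\in\mathcal S} |B|\, \langle f\rangle_{p_1,B}\langle g\rangle_{p_2,B}\lesssim \sum_{B\in\mathcal S}\int_{E_B} M_{p_1}^{\mathrm{HL}}f\cdot M_{p_2}^{\mathrm{HL}}g\,\mathrm{d} x \leq \int_{\mathbb X} M_{p_1}^{\mathrm{HL}}f\cdot M_{p_2}^{\mathrm{HL}}g\,\mathrm{d} x.
\]
H\"older's inequality in $L^p(w)\times L^{p'}(\sigma)$ then reduces the task to the two sharp weighted bounds $\|M_{p_1}^{\mathrm{HL}}f\|_{L^p(w)}\lesssim [w]_{A_{p/p_1}}^{1/(p-p_1)}\|f\|_{L^p(w)}$ and $\|M_{p_2}^{\mathrm{HL}}g\|_{L^{p'}(\sigma)}\lesssim [\sigma]_{A_{p'/p_2}}^{1/(p'-p_2)}\|g\|_{L^{p'}(\sigma)}$, both of which are instances of the Buckley-type sharp weighted estimate for the Hardy-Littlewood maximal operator, available in the present generality from the Hyt\"onen-P\'erez theory in spaces of homogeneous type.

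\textbf{Identifying the exponent and main obstacle.} The final step is to re-express the dual characteristic $[\sigma]_{A_{p'/p_2}}$ in terms of $w$ via the standard $A_p$-$\mathrm{RH}_q$ duality in spaces of homogeneous type: the assumption $w\in A_{p/p_1}\cap\mathrm{RH}_{p_2'/p}$ is equivalent to $\sigma\in A_{p'/p_2}$ with $[\sigma]_{A_{p'/p_2}}$ controlled by the symmetric product $[w]_{A_{p/p_1}}[w]_{\mathrm{RH}_{(p_2'/p)'}}$ up to a constant and to a suitable power. Substituting and using the arithmetic identity $1/(p'-p_2)=(p_2'-1)/(p_2'-p)$, the $f$-side of the Buckley bound contributes the exponent $1/(p-p_1)$ and the $g$-side the exponent $(p_2'-1)/(p_2'-p)$; passing to the worse of the two yields the announced $\max$ applied to the common base $[w]_{A_{p/p_1}}[w]_{\mathrm{RH}_{(p_2'/p)'}}$. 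The main technical point is precisely this bookkeeping of the $A_p$-$\mathrm{RH}_q$ conversion to produce exactly the symmetric product of the two weight characteristics with exponent one in the base, rather than a more asymmetric expression; every other ingredient is either Theorem~\ref{th:SD} used as a black box or a quantitative weighted maximal inequality taken off the shelf.
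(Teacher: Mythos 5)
The paper does not spell out a proof here; it simply states the corollary as a direct consequence of Theorem~\ref{th:SD} and cites \cite[\S6]{BFP} and \cite{CDPOUBP}, so the comparison is really with the argument of those references. Your reduction $\sum_{B\in\mathcal S}|B|\,\langle f\rangle_{p_1,B}\langle g\rangle_{p_2,B}\lesssim \int_{\mathbb X} M^{\mathrm{HL}}_{p_1}f\cdot M^{\mathrm{HL}}_{p_2}g$ is correct, but the subsequent step of applying H\"older in $L^p(w)\times L^{p'}(\sigma)$ and then Buckley's inequality to each factor cannot produce the exponent $\max\bigl(\frac{1}{p-p_1},\frac{p_2'-1}{p_2'-p}\bigr)$. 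That route yields a \emph{product} $[w]_{A_{p/p_1}}^{1/(p-p_1)}[\sigma]_{A_{p'/p_2}}^{1/(p'-p_2)}$, and once both factors are written as powers of the common base $[w]_{A_{p/p_1}}[w]_{\mathrm{RH}_{(p_2'/p)'}}\geq 1$, the exponents \emph{add}; they do not collapse to the maximum, so the phrase ``passing to the worse of the two'' is not justified. In the model case $p_1=p_2=1$, $p=2$ this is precisely the classical obstruction behind the sharp $A_2$ theorem: the $Mf\cdot Mg$ route gives $\|Mf\|_{L^2(w)}\|Mg\|_{L^2(\sigma)}\lesssim[w]_{A_2}[\sigma]_{A_2}=[w]_{A_2}^{2}$, strictly worse than the sharp $[w]_{A_2}$. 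A secondary issue is the arithmetic: $1/(p'-p_2)\neq (p_2'-1)/(p_2'-p)$; in fact $1/(p'-p_2)=(p-1)\cdot(p_2'-1)/(p_2'-p)$.

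The sharp exponent is obtained by estimating the sparse form directly rather than bounding it by $\int M_{p_1}f\cdot M_{p_2}g$. The argument in \cite[\S6]{BFP} tests the sparse form against $fw^{-1/p}$ and $gw^{1/p}$, uses on each ball the $A_{p/p_1}$ and $\mathrm{RH}_{(p_2'/p)'}$ conditions simultaneously to pass to weighted averages, and closes with a Carleson-embedding argument for the sparse family; only one maximal-type estimate is ultimately needed, which is what produces a $\max$ of exponents rather than a sum. You should rework the second and third steps along those lines, or else explicitly settle for a bound with the larger, additive exponent and note it is not the one claimed in the corollary.
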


\section{Applications: the $L^p$-improving property revisited} \label{sec:Lpimproving}

\subsection{Calder\'on-Zygmund theory} This subsection is a digression devoted to the  description of  classical Calder\'on-Zygmund operators in the homogeneous setup. These are themselves $L^p$-improving operators \emph{par excellence} and help illustrate  and contextualize the definitions in this paper. We make this precise below.

\subsubsection{Calder\'on-Zygmund operators} We begin by giving the formal definition of Calder\'on-Zygmund operator on a space of homogeneous type $(X,\d,|\cdot|)$. The definition below  is given for Dini continuous operators, but of course more general definitions are possible.
\begin{definition}\label{def:CZO} Let $(\mathbb X,\d,|\cdot|)$ be a space of homogeneous type. We say that $T$ is a Dini-Calder\'on--Zygmund operator on $\mathbb X$ if $T$ is bounded on $L^p(\mathbb X)$ for some $p\in(1,\infty)$ and there exists a kernel $K:\mathbb X\times \mathbb X\setminus\{x,y\in\mathbb X:\, x=y\}\to \mathbb C$ such that for all $f\in  \mathrm{Lip}({\mathbb{X}})$ with compact support 
\[
T(f)(x)= \int_{X} K(x,y)f(y)\, \d y ,\qquad \forall x\notin {\rm supp} f.
\]
The kernel $K(x,y)$ is assumed to satisfy the following size and regularity conditions: there exist constants $C_T,A>1$ such that for all $x\not= y$,
\begin{equation}\label{eq:CZOsize}
    |K(x, y)| \leq {\frac{C_T}{V(x, y)}},
\end{equation}
and for pairwise different $x,x',y\in\mathbb X$ with $\d(x, x')\leq A^{-1} \d(x, y)$ there holds 
\begin{equation}\label{eq:CZOreg}
    |  K(x, y) - K(x', y) |+|  K(y,x) - K(y,x') |  \leq \frac{C_T}{V(x,y)}\omega\left({\frac{d(x, x')}{ d(x,y)}}\right).
\end{equation}
In the estimates above   $V(x,y)\coloneqq |B(x,\d(x,y))|$, and $\omega$ is a \emph{Dini} modulus of continuity. Note that by the doubling condition on the measure $|\cdot|$ we have that $V(x,y)\simeq V(y,x)$.
\end{definition}
We first recall an easy decomposition of Calder\'on-Zygmund operators into local pieces. Given $T$ a Calder\'on-Zygmund operator on $(\mathbb X,\d,|\cdot|)$ associated with a kernel $K$ we define
\[
T(f)(x)=\sum_{s\in \mathbb Z} [T(s)f](x)\coloneqq \sum_{s\in\mathbb Z} \int_{2^s\leq \d(x,y)<2^{s+1}} K(x,y) f(y)\,\d y,\qquad x\in \mathbb X.
\]
We set
	\[
	K_s(x,y)\coloneqq K(x,y)\ind_{\{(x,y)\in\mathbb X \times \mathbb X:\, 2^s\leq \d(x,y)<2^{s+1}\}}(x,y)
	\]
so that for $s\in \Z$
\[
T(s)f(x)=\int K_s(x,y) f(y)\,\d y,\qquad x\in \mathbb X.
\]
Note that the formula above makes sense for functions $f$ which are Lipschitz with compact support as we restrict $(x,y)$ away from the diagonal and $K$ satisfies the size condition \eqref{eq:CZOsize}. Furthermore, it is well known that  maximal truncations of Calder\'on-Zygmund operators on metric spaces of homogeneous type are uniformly $L^p(\mathbb X)$-bounded, see for example~\cite[\S I.7]{stein}.

With these definitions in hand, it is easily verified that the truncations of $\hspace{.1em}T$ satisfy the localization properties \eqref{eq:localization}. Furthermore, we can readily check that $T$ is $(1,\infty)$ improving.
\begin{lemma}\label{lem:imprczo} Let $T=\sum_{s}T(s)$ be a Dini-Calder\'on-Zygmund operator on $(\mathbb X,\d,|\cdot|)$ as  defined above. Then $T$ and $T^*$ are $(1,\infty)$ improving in the sense of Definition~\ref{d.LPimprove}.	
\end{lemma}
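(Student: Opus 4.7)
The plan is to verify conditions (a) and (b) of Definition~\ref{d.LPimprove} with $p_1 = p_2 = 1$ (so $p_2' = \infty$) for both $T(s)$ and $T^*(s)$. As a preliminary convenience, I would invoke Remark~\ref{rmrk:represent} to replace the sharp annular cutoff in the definition of $T(s)$ with a smooth one: pick $\phi \in \mathrm{Lip}([0, \infty))$ supported in $[1/2, 2]$ with $\sum_{s \in \Z} \phi(2^{-s}\,\cdot) \equiv 1$ on $(0, \infty)$ and set $K_s(x, y) = K(x, y)\phi(2^{-s}\d(x, y))$. Up to replacing $\d$ by a suitable power that is equivalent to a metric, one may assume $|\d(x, y) - \d(x_0, y)| \leq \d(x, x_0)$, which will be used only to handle the Lipschitz difference of $\phi$.

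For condition (a), the $L^1 \to L^\infty$ bound at scale $s$ is immediate from the size estimate \eqref{eq:CZOsize} and doubling: $|K_s(x, y)| \lesssim C_T/|B(x, 2^s)|$ on the support $\d(x, y) \sim 2^s$, and $|B(x, 2^s)| \sim |L|$ for $x \in \gamma_2 L$ with $r_L \sim 2^s$, whence
\[
|T(s)(f \ind_L)(x)| \lesssim \frac{C_T}{|L|} \int_L |f| = C_T \l f \r_{1, L}.
\]

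For condition (b), the Dini improvement, let $b$ be a $(1, r)$-atom on $B_b = B(x_0, r)$ with $r \leq 2^s$; by localization \eqref{eq:localization}, only the case $B_b \subset \gamma_2 L$ yields a nontrivial pairing. The cancellation $\int b = 0$ gives
\[
\l T(s)(f \ind_L), b \r = \int f(y) \ind_L(y) \int b(x) [K_s(x, y) - K_s(x_0, y)] \, dx \, dy,
\]
and the kernel difference splits as
\[
|K_s(x, y) - K_s(x_0, y)| \leq |K(x, y) - K(x_0, y)|\phi(2^{-s}\d(x, y)) + |K(x_0, y)| \cdot |\phi(2^{-s}\d(x, y)) - \phi(2^{-s}\d(x_0, y))|.
\]
The first summand is $\lesssim C_T\, \omega(r/2^s)/|B(x_0, 2^s)|$ via \eqref{eq:CZOreg}; the second is $\lesssim C_T\,(r/2^s)/|B(x_0, 2^s)|$ using Lipschitz continuity of $\phi$ and the estimate $|\d(x, y) - \d(x_0, y)| \leq r$. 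Since the identity modulus $t$ is Dini, $\tilde \omega(t) = \omega(t) + t$ is a Dini modulus bounding both terms. Integrating in $(x, y)$ and applying doubling to pass between $|B(x_0, 2^s)|$, $|L|$, and $|\gamma_2 L|$ produces
\[
|\l T(s)(f \ind_L), b \r| \lesssim \tilde\omega(r/2^s)\, |L| \l f \r_{1, L} \l b \r_{1, \gamma_2 L}.
\]

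For $T^*$, the second half of \eqref{eq:CZOreg} provides the analogous regularity estimate for the adjoint kernel $K(y, x)$, so the same argument transfers verbatim with the roles of the two variables swapped. The main technical subtlety is the boundary of the dyadic cutoff: smoothing (allowed by Remark~\ref{rmrk:represent}) reduces this to the elementary Lipschitz estimate above, whereas a direct treatment with the sharp annular cutoff would require geometric control on the measure of thin annuli that is not guaranteed by doubling alone.
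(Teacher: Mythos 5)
Your proof is correct, and it in fact addresses a technical point that the paper's own proof glosses over. The paper's argument keeps the sharp annular cutoff $K_s(x,y) = K(x,y)\ind_{\{2^s\leq\d(x,y)<2^{s+1}\}}$ and directly asserts $|K_s(x,y)-K_s(c_B,y)|\lesssim \omega(r/2^s)V(x,y)^{-1}\ind_{\{2^s\leq\d(x,y)<2^{s+1}\}}$; this bound fails for pairs $(x,y)$ where exactly one of $\d(x,y)$, $\d(c_B,y)$ lies in the annulus, since then the difference equals $|K(\cdot,y)|\sim V^{-1}$ rather than $\omega(r/2^s)V^{-1}$. As you correctly note, taming that boundary term with the sharp cutoff would require quantitative control on the measure of a thin annulus of thickness $\sim r$, which doubling alone does not provide. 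Your smooth-cutoff decomposition sidesteps the problem cleanly, at the cost of a harmless extra Lipschitz contribution absorbed into the modified Dini modulus $\tilde\omega(t)=\omega(t)+t$. The rest of your argument — the size estimate for condition (a), the cancellation-plus-regularity estimate for condition (b), the transfer to $T^*$ via the symmetric condition \eqref{eq:CZOreg} — matches the paper's structure.

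Two small corrections to the framing. The justification for switching to a smooth cutoff is not Remark~\ref{rmrk:represent} (which concerns realizing $T$ as a weak limit of truncations and the resulting multiplication-operator ambiguity), but rather the fact that the decomposition $T\sim\sum_s T(s)$ entering Theorem~\ref{th:SD} is at the prover's discretion: any single-scale pieces satisfying the localization and improving hypotheses will do, and different choices of cutoff yield different but equally valid decompositions of the same $T$. The localization argument in part (b) should also read $B_b\cap c_o L\neq\varnothing$ rather than $B_b\subset\gamma_2 L$; this is what \eqref{eq:localization} actually gives, though of course the two are equivalent up to enlarging $\gamma_2$. The ``replace $\d$ by a power equivalent to a metric'' step is the Mac\'ias--Segovia theorem; stating it by name and recording how the exponent feeds into the scales $2^s$ would tighten the write-up.
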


\begin{proof} Let $s\in\Z$ and $f$ be a Lipschitz function with compact support. Let $L$ be a ball with $2^s\leq r_L\leq \gamma_1 2^s$ and $b$ be a $(1,r)$ atom supported in some ball $B=B(c_B,r)$ of radius $r\leq 2^s$. In order to verify a. of Definition~\ref{d.LPimprove} we write for $x\in \gamma_2 L$
\[
|T(s)[f\ind_L](x)|\leq C_T \int_{2^s\leq \d(x,y)<2^{s+1}} \frac{|f(y)\ind_L(y)|}{V(x,y)} \,\d y .
\]
Now $V(x,y)=|B(x,\d(x,y))|\geq |B(x,2^s)|\gtrsim |L|$  whenever $x\in\gamma_2 L$ by the doubling assumption and the observation that $L\subset B(x,c2^s)$ for some suitable constant $c>0$. 	This proves
\[
\|T(s)[f\ind_L]\|_{L^\infty(\gamma_2 L)} \lesssim_{T,\mathbb X} \l f \r_{1,L}
\]
as desired in order to verify a. of the definition. In order to verify b. we use the localization property \eqref{eq:localization} and the cancellation of $b$ to estimate
\[
\begin{split}
& \left|  \int[T(s)[f\ind_L](x)b(x) \,\d x\right|\leq \int_L  \int_{c_oL} | K_s(x,y)-K_s(c_B,y) ]f(y)b (x)| \, \d x\,  \d y
\\
&\qquad \lesssim  \omega\left(\frac{r}{2^s}\right) \int_{L} \int_{c_oL}\frac{1}{V(x,y)}|f(y)|  |b|\ind_{\{ 2^s\leq \d(x,y)<2^{s+1}\}}(x,y) \, \d x\, \d y
\\
& \qquad \lesssim  \omega\left(\frac{r}{2^s}\right) |L| \langle f\rangle_{1,L} \l b\r_{1,c_o L}
\end{split}
\]
by noting again that we can replace the term $V(x,y)$ by $|L|$ for 	$x\in L$ and $2^s<\d(x,y)\leq  2^s$. This proves the $(1,\infty)$-improving property according to Definition~\ref{d.LPimprove} with the same modulus of continuity as in the definition of $\hspace{.1em}T$; in particular here $\omega$ is assumed to satisfy the Dini condition. The conclusion for $T^*$ follows since $T$ is essentially self adjoint.
\end{proof}

Combining Lemma~\ref{lem:imprczo} with Theorem~\ref{th:SD} immediately yields sparse domination theorem for Dini-Calder\'on-Zygmund operators. Of course this result is known, see for example \cite{Kara,VZK}. However, our proof bypasses the usage of dyadic systems in spaces of homogeneous type,  unlike previous approaches.

\begin{theorem} \label{thm:czo} Let $({\mathbb{X}},\d,|\cdot|)$ be a space of homogeneous type and $T$ be a  Calder\'on--Zygmund operator on $({\mathbb{X}},\d,|\cdot|)$ which is bounded on some $L^p(\mathbb X)$, $1<p<\infty$, with Dini modulus of continuity. Then, for all $f_1,f_2\in \mathrm{Lip}({\mathbb{X}})$ with compact support there exists a sparse collection $\mathcal B$ consisting of $\d$-balls such that
\[
|\l T f_1, f_2 \r| \lesssim (C_p+\|\omega\|_{\mathrm{Dini}})\sum_{B \in \mathcal B} |B| \l f_1 \r_{1,B}  \l f_2 \r_{1, B}.
\]
The implicit constant depends on the homogeneous metric structure of $({\mathbb{X}},\d,|\cdot|)$ and on the constants in the kernel assumptions for $T$.
\end{theorem}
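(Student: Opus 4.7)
\smallskip

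\noindent\textbf{Proof proposal.} The plan is to verify that the dyadic annular decomposition of $T$ fits into the abstract framework of Theorem~\ref{th:SD} and then simply invoke it with $p_1=1$, $p_2'=\infty$. Set $T(s)f(x)\coloneqq\int K_s(x,y)f(y)\,\d y$ with $K_s=K\ind_{\{2^s\le \d(x,y)<2^{s+1}\}}$, so that $T\sim\sum_s T(s)$ formally. First I would check the structural localization \eqref{eq:localization}: if $L$ is a ball of radius $r_L=2^{s_L}$ and $s\le s_L$, then for $x\notin c_o L$ (with $c_o$ depending only on the quasi-triangle constant $c_\d$ and an absolute factor coming from the annulus width $2^{s+1}$), the kernel $K_s(x,y)$ vanishes whenever $y\in L$, which gives the support condition.

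The next step is the uniform bound \eqref{eq:unifbdd}. This is standard Calder\'on–Zygmund theory on spaces of homogeneous type: the maximal truncation operator $T^\# f=\sup_{\sigma<\tau}|T_\sigma^\tau f|$ is bounded on $L^p(\mathbb X)$ by a Cotlar-type inequality (see e.g.\ \cite[\S I.7]{stein}), yielding $\sup_{\sigma<\tau}\|T_\sigma^\tau\|_{p\to p}\lesssim C_p$. This also guarantees that, up to passing to subsequences, weak-$*$ limits of $T_\sigma^\tau$ exist and differ from $T$ by multiplication by a bounded function, producing the realization \eqref{eq:T}. The $(1,\infty)$-improving property of $T$ and $T^*$ with the same Dini modulus $\omega$ is then precisely the content of Lemma~\ref{lem:imprczo}, already proved in the paper; note $T^*$ has kernel $K^*(x,y)=\overline{K(y,x)}$, which satisfies the same size and regularity hypotheses, so the lemma applies symmetrically.

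Having verified hypotheses (1)--(3) of Theorem~\ref{th:SD} with $p_1=1$, $p_2=1$, $p_2'=\infty$, modulus $\omega$, and constants $C_p$, $I$, $I^*$ all controlled by $C_p+\|\omega\|_{\mathrm{Dini}}$, I would directly apply the second (unrestricted) assertion of Theorem~\ref{th:SD} in the form with $T$ realized as in \eqref{eq:T}. This yields a sparse collection $\mathcal S$ of quasimetric balls and the estimate
\[
|\langle Tf_1,f_2\rangle|\lesssim (C_p+\|\omega\|_{\mathrm{Dini}})\sum_{B\in\mathcal S}|B|\,\langle f_1\rangle_{1,c_1B}\,\langle f_2\rangle_{1,c_1B}.
\]
Passing from the dilated balls $c_1B$ to $B$ in the conclusion is harmless: since $|\cdot|$ is doubling, replacing $\mathcal S$ by $\{c_1B:B\in\mathcal S\}$ yields a new collection which is still sparse (with a smaller constant $\zeta$), and we may rename it $\mathcal B$.

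The main obstacle, and the only point requiring care, is the passage from the truncated statement (for $T_\sigma^\tau$) to the full operator $T$. This requires confirming that $T$ actually agrees with the weak limit of $T_\sigma^\tau$ modulo a bounded multiplication operator $m$, in the sense of \eqref{eq:T}; the multiplier term contributes a term bounded by $\|m\|_\infty\|f_1\|_1\|f_2\|_1\lesssim C_p\langle f_1\rangle_{1,B_0}\langle f_2\rangle_{1,B_0}$ for any sufficiently large ball $B_0$ containing the supports of $f_1,f_2$, which is absorbed into the sparse sum by adjoining $B_0$ to $\mathcal B$. All the hard analytic work—the single-scale cancellation estimate and the Whitney-based stopping-time construction—has already been packaged into Theorem~\ref{th:SD} and Lemma~\ref{lem:imprczo}, so the proof reduces to the bookkeeping outlined above.
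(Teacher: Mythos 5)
Your proposal follows essentially the same route as the paper: verify the localization condition \eqref{eq:localization} for the annular truncations, obtain the uniform $L^p$-bound \eqref{eq:unifbdd} from Cotlar's inequality, invoke Lemma~\ref{lem:imprczo} for the $(1,\infty)$-improving property of $T$ and $T^*$, and then apply Theorem~\ref{th:SD} with $p_1=p_2=1$. One remark on the final paragraph: once you invoke the second (untruncated) assertion of Theorem~\ref{th:SD}, the multiplier term $\l mf_1,f_2\r$ is already handled inside its proof (the ``First term'' estimate in \S\ref{s.together}), so that extra discussion is not needed; moreover the bound you propose there is incorrect, since $|\l mf_1,f_2\r|\leq\|m\|_\infty\int|f_1 f_2|$ cannot be controlled by a single term $|B_0|\l f_1\r_{1,B_0}\l f_2\r_{1,B_0}$ (take $f_1=f_2=\ind_E$ with $|E|\ll|B_0|$), nor is $\|f_1\|_1\|f_2\|_1$ an upper bound for it. Since you do not actually need that step, the argument is unaffected.
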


\subsubsection{Geometric maximal operators} A somewhat trivial application of Theorem~\ref{th:max} provides a sparse domination theorem for geometric maximal operators in metric spaces. For this, consider the maximal operator
\[
\mathrm{M}f(x)\coloneqq \sup_{x\in B} \intav_B |f(y)|\, \d y,\qquad x\in\mathbb X.
\]
First of all, note that if $B=B(c_B,r_B)$ with $2^s <r_B\leq 2^{s+1}$ then the doubling property of the measure $|\cdot|$ implies that
\[
\intav_ B |f(y)|\, \d y \lesssim \intav _{B(c_B,2^{s+1})} |f(y)|\,\d y
\]
and so we can assume that all the balls in the definition of $\mathrm{M}$ have dyadic radii. Now,  define the single scale average
\[
T(s)f(x)\coloneqq \sup_{\substack{B\ni x\\ r_B=2^s}} \intav_B |f|,\qquad \mathrm{M}f \lesssim T_\star f\coloneqq \sup_{s\in\Z} |T(s)f|,\qquad x\in\mathbb X.
\]
A well known procedure allows us to approximate $T(s)$ by a smoother operator. Take a function $f$ which is bounded and compactly supported in some ball $B$ with $r_B=2^{s_B}$ and fix some scale $ s\in \Z$ with $s\leq s_B$. Since we are working on a homogeneous space we can cover $B$ by a union of balls $L_\tau\coloneqq B(c_\tau,2^s)$ so that for every $\rho>1$ we have $\sum_{\tau }\ind_{\rho L_\tau}\lesssim 1$; see Lemma~\ref{l.cover}. Then one easily constructs a $\sim 1/2^s$-Lipschitz partition of unity $\{\psi_\tau\}_{\tau}$, $0\leq \psi_\tau \leq 1$, subordinate to the cover $\{L_\tau\}_\tau$, so that $\psi_\tau\gtrsim 1$ on every ball $c_1L_\tau$ and $\mathrm{supp}\psi_\tau \subset c_2 L_\tau$ for each $\tau$ and some structural constants $c_2>c_1>1$. The single scale operator $T(s)$ can be approximated in the form
\[
T(s)f(x)\lesssim \sum_\tau \psi_\tau (x) \intav_{B(c_\tau, c_1 2^s)} |f(y)|\, \d y\lesssim \sum_\tau \frac{\psi_{\tau }(x) }{|B(c_\tau,c_1 2^s)|} \int |f(y)| \psi_{\tau}(y) \, \d y\eqqcolon A(s)|f|(x).
\]
The process above is a version of discrete convolution which is a standard tool in harmonic analysis on homogeneous spaces; see for example \cite{KiAl}. 

\begin{lemma} 
	The operator $A(s)$ defined above is $(1,\infty)$-improving for all $s\in \Z$.
\end{lemma}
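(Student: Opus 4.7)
The plan is to pick the linear modulus $\omega(t)=t$, which is plainly Dini, and then verify conditions (a) and (b) of Definition~\ref{d.LPimprove} at scale $s$ with $p_1=p_2=1$, exploiting two complementary features of the symmetric kernel
\[
K_s(x,y)=\sum_\tau \frac{\psi_\tau(x)\psi_\tau(y)}{|B(c_\tau,c_1 2^s)|}
\]
of $A(s)$: the uniform bound $0\le\psi_\tau\le 1$ together with bounded overlap $\sum_\tau \ind_{c_2L_\tau}\lesssim 1$, and the $\sim 2^{-s}$-Lipschitz regularity of each $\psi_\tau$. Note that $A(s)$ is self-adjoint, so no separate argument is needed for $A(s)^*$.

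For condition (a), fix a ball $L$ with $2^s\le r_L\le\gamma_1 2^s$ and a point $x\in\gamma_2 L$. Only those $\tau$ whose support ball $L_\tau$ meets $\gamma_2 L$ or $L$ contribute, and the geometric constraints together with the $(2,\beta)$-doubling condition force $|B(c_\tau,c_1 2^s)|\sim|L|$ for every such $\tau$. Replacing $\psi_\tau(y)\le 1$ in the inner integral and using bounded overlap in the outer sum yields
\[
A(s)(f\ind_L)(x)\lesssim \frac{1}{|L|}\int_L|f|\,\sum_\tau\psi_\tau(x)\lesssim \l f\r_{1,L},
\]
which is precisely (a) with $p_1=1$, $p_2'=\infty$.

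For condition (b), first observe that by the support of the $\psi_\tau$ the image $A(s)(f\ind_L)$ is supported in $c_oL$ for a structural $c_o$; thus we may assume the $(1,r)$-atom $b$ is supported in some ball $B=B(c_B,r)\subset \gamma_2 L$, else the pairing vanishes. Inserting the cancellation $\int b=0$ gives
\[
\int b(x)\psi_\tau(x)\,\d x=\int b(x)\bigl(\psi_\tau(x)-\psi_\tau(c_B)\bigr)\,\d x,
\]
and the Lipschitz bound $|\psi_\tau(x)-\psi_\tau(c_B)|\lesssim \d(x,c_B)/2^s\le r/2^s$ on $\supp b$ extracts the key factor $r/2^s$. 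Only boundedly many $\tau$ contribute (overlap of $\{c_2L_\tau\}$ at $c_B$), and for each such $\tau$ the inner average against $f\ind_L$ is again controlled by $\l f\r_{1,L}$ as in (a). Using $\int|b|\sim|L|\l b\r_{1,\gamma_2 L}$ by doubling, one obtains
\[
\bigl|\l A(s)(f\ind_L),b\r\bigr|\lesssim \tfrac{r}{2^s}\,|L|\,\l f\r_{1,L}\,\l b\r_{1,\gamma_2 L},
\]
which is (b) with $\omega(t)=t$.

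There is no genuine obstacle here; the only care required is bookkeeping of the several enlargements (arising from the partition supports, from \eqref{eq:localmax}, and from doubling) so that at each step the volumes of the balls in play remain comparable to $|L|$, and that at each spatial point only $O(1)$ indices $\tau$ enter the sum. The linear modulus $\omega(t)=t$ is a direct reflection of the Lipschitz regularity of the partition of unity used to smooth out $T(s)$.
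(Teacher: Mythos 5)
Your argument is correct and follows essentially the same path as the paper's proof: for part (a) you restrict to the $O(1)$ indices $\tau$ contributing near $\gamma_2 L$, use $|B(c_\tau,c_1 2^s)|\simeq |L|$ by doubling, and bound $\sum_\tau\psi_\tau\le 1$; for part (b) you insert the cancellation $\int b=0$ to replace $\psi_\tau(x)$ by $\psi_\tau(x)-\psi_\tau(c_B)$ and harvest the factor $r/2^s$ from the $\sim 2^{-s}$-Lipschitz regularity of the partition of unity, exactly as in the paper. The only cosmetic difference is that you make the choice $\omega(t)=t$ explicit at the outset, whereas the paper leaves it implicit in the final display.
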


\begin{proof} Note that for every ball $ L$ of radius $r_L\simeq 2^s$ we have
\[
\sup_{x\in \gamma_2 L}|A(s)[f\ind_L](x)| \lesssim\sum_{\tau:\, c_2 L_\tau \cap \gamma_2 L \neq \varnothing}\frac{1}{|B(c_\tau,c_1 2^s)|}\int_{c_2L_\tau \cap L} |f(y)|\,\d y \lesssim \l f\r_{1,L}
\]
since all balls in the sum have comparable radius, they intersect $L$, and they have finite overlap. This proves a. of Definition~\ref{d.LPimprove}.

In order to prove b. of Definition~\ref{d.LPimprove} we consider for each $s$ the duality form
\[
\l A(s)[ f\ind_L],  b\r =\sum_\tau \frac{1}{|B(c_\tau,c_1 2^s)|}\int \int  \psi_\tau(x)\psi_\tau (y) f(y)\ind_L(y) b(x)\, \d y\, \d x
\]
where $r_L\eqsim 2^s$ and $b$ is a $(1,r)$ atom supported in $B=B(c_B,r)$ with $r\leq 2^s$. Then one easily verifies the $(1,\infty)$-improving property of Definition~\ref{d.LPimprove}. Indeed we have
\[
\begin{split}
& |\l A(s) [f\ind_L], b\r| \lesssim \sum_{\tau}  \frac{1}{|B(c_\tau,c_1 2^s)|}\int \bigg(\int  \psi_\tau(y)f(y)\ind_L(y) \d y \bigg) \psi_\tau (x) b(x) \, \d x\, 
\\
& \qquad \lesssim  \sum_{\tau:\,  {\substack{c_2L_\tau \cap B\neq \varnothing\\ c_2 L_\tau \cap L\neq \varnothing}}}  \frac{1}{|B(c_\tau,c_1 2^s)|}\int_B \bigg(\int_{L\cap c_2 L_\tau}  |\psi_\tau(y)f(y)|  \d y \bigg) |\psi_\tau (x)-\psi_\tau(c_B)|| b(x)|\, \d x.
\end{split}
\]
Observe that $B\cap c_2 L_\tau\neq \varnothing$ for all $\tau$ in the sum that yield non-zero terms and $r_B\leq 2^2\eqsim r_L$ so that $B\subset L_\tau$ for some $\tau$ in the sum. At the same time all $\tau$ in the sum that yield non-zero term must also satisfy $c_2L_\tau \cap L\neq \varnothing$ and these balls have comparable radii, so that $\cup_{\tau }L_\tau \subset \gamma_2 L$ for some structural constant $\gamma_2$. Combining these facts with the Lipschitz condition on the functions $\psi_\tau$ and the finite overlap of the balls $\{\rho L_\tau\}$ readily implies that
\[
 |\l A(s)  [f\ind_L], b\r|\lesssim \\ \frac{r}{2^s} |L|\l f \r_{1, L} \l b\r_{1,\gamma_2 L}
\]
which is b. of Definition~\ref{d.LPimprove}.
\end{proof}

Combining the lemma above with Theorem~\ref{th:max} and the fact that $\mathrm{M}f\lesssim \sup_s |A_s(|f|)|$ immediately yields the following sparse domination result.

\begin{corollary} \label{c:geomax}For every $f_1,f_2$ bounded with compact support there exists a sparse collection $\mathcal B$ such that
	\[
	|\l \mathrm M f_1,f_2\r |\lesssim  \sum_{B\in\mathcal B}|B| \l f_1 \r_{1,B} \l f_2 \r_{1, B}.
	\]
\end{corollary}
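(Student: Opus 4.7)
The plan is to apply Theorem~\ref{th:max} directly to the sequence of smoothed linear operators $\{A(s)\}_{s\in\Z}$ introduced just above, taking $p_1 = p_2 = 1$ (so $p_1' = p_2' = \infty$). Replacing $f_1, f_2$ by $|f_1|, |f_2|$ only increases the left-hand side and leaves the right-hand side unchanged, so I may assume both are nonnegative from the start. The pointwise domination $\mathrm{M}f_1(x)\lesssim \sup_s A(s) f_1(x)$ has already been established in the paper, so it suffices to produce a sparse collection $\mathcal B$ that controls $\bigl\langle \sup_s |A(s)f_1|, f_2 \bigr\rangle$.

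The bulk of the work is to verify the three hypotheses of Theorem~\ref{th:max} for the sequence $\{A(s)\}$. First, the localization \eqref{eq:localmax} holds: in $A(s)[g\ind_L]$ with $r_L\geq 2^s$, only indices $\tau$ with $c_2 L_\tau \cap L \neq \varnothing$ contribute, and each such $L_\tau$ has radius comparable to $2^s$, hence by the quasi-triangle inequality the output is supported in $c_o L$ for a structural $c_o$. Second, $A_\star$ is bounded on $L^\infty(\mathbb X)$: since $\psi_\tau\leq \ind_{c_2 L_\tau}$, doubling yields $|B(c_\tau,c_1 2^s)|^{-1}\int \psi_\tau \lesssim 1$, and the finite overlap of $\{c_2 L_\tau\}_\tau$ then gives $|A(s)g|\lesssim \|g\|_\infty$ pointwise, uniformly in $s$. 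Third, the required $(1,\infty)$-improving of $A(s)^*$ is exactly the statement of the preceding lemma, because the kernel
\[
K_s(x,y) = \sum_\tau \frac{\psi_\tau(x)\psi_\tau(y)}{|B(c_\tau,c_1 2^s)|}
\]
of $A(s)$ is symmetric in $(x,y)$, so $A(s)^*=A(s)$. The Lipschitz modulus $\omega(t)=t$ extracted from that lemma is trivially Dini.

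With the three hypotheses in place, the second (non-truncated) conclusion of Theorem~\ref{th:max} supplies a sparse collection $\mathcal B$ of $\d$-balls such that
\[
\bigl\langle \sup_s |A(s) f_1|, f_2 \bigr\rangle \lesssim \sum_{B\in\mathcal B} |B|\l f_1\r_{1,B}\l f_2\r_{1,B},
\]
and combining this with the pointwise bound on $\mathrm{M}f_1$ gives the corollary. There is no real obstacle: the whole argument is an application of the black box of Theorem~\ref{th:max}, once the geometric maximal operator has been repackaged as $\sup_s|A(s)(\cdot)|$. The only step requiring a moment of care is the identification $A(s)^*=A(s)$, which is immediate from the explicit symmetric kernel.
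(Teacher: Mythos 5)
Your proof is correct and matches the paper's (extremely terse) argument: the paper simply says that the corollary follows by combining the preceding lemma, Theorem~\ref{th:max}, and the pointwise domination $\mathrm{M}f\lesssim\sup_s A(s)|f|$. You have helpfully made explicit the three hypotheses of Theorem~\ref{th:max} (localization, $L^\infty$-boundedness of the maximal operator, and the $(1,\infty)$-improving of $A(s)^*=A(s)$ via symmetry of the kernel), all of which the paper leaves implicit.
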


\subsection{Singular Radon transforms along polynomial manifolds} As anticipated in the introduction, our focus is on Radon transforms as examples of $L^p$-improving operators, in particular singular integrals along free monomial varieties. For this reason we focus on the next paragraph on metric spaces of the form $(\R^n,\d,|\cdot|)$, where $|\cdot|$ is the Lebesgue measure.

\subsubsection{Homogeneous norms on $\R^n$} Consider the metric space $(\R^n,\d,|\cdot|)$ where $|\cdot|$ denotes the Lebesgue measure. In particular the underlying space is a vector space and we have translations. Furthermore we will assume that the metric $\d$ is given by a quasi-norm $\rho:\R^n\to [0,\infty)$ and that there exists a dilation structure $\delta_t:\R^n\to \R^n$, $t>0$, with respect to which the quasi-norm $\rho$ is homogeneous
\[
\d(x,y)\coloneqq \rho(x-y),\qquad \rho(\delta_tx)=t \rho(x),\qquad x,y\in\R^n,\quad t>0.
\]
For the purposes of this paragraph it will be enough to consider the special case that there exist $\alpha_1,\ldots,\alpha_n>0$ such that
\[
\delta_t(x_1,\ldots,x_n)\coloneqq (t^{\alpha_1}x_1,\ldots ,t^{\alpha_n}x_n),\qquad (x_1,\ldots,x_n)\in\R^n,\quad t>0.
\]
One of many equivalent quasi-norms compatible with $\delta_t$ can be defined as
\begin{equation}\label{eq:rho}
\rho(x)\coloneqq \Big(\sum_{j=1} ^n |x_j|^{\frac{2}{\alpha_j}}\Big)^{\frac12},\qquad x=(x_1,\ldots,x_n)\in\R^n,
\end{equation}
and  $\rho$ is homogeneous with respect to $\delta_t$. Clearly $\rho$ is symmetric and satisfies a quasi-triangle inequality. Furthermore for any ball $B(x,r)$ given by $\rho$ we have $|B(x,r)|=|B(0,r)|\eqsim_n r^\alpha$ where $\alpha\coloneqq \alpha_1+\cdots+\alpha_n$ will be referred to as the \emph{homogeneous dimension} of $(\R^n,\rho,|\cdot|)$. In this context will will write $(\R^n,\rho,|\cdot|)$ for the homogeneous metric structure on $\R^n$ described by these definitions. We note that this setup is classical and further details can be found in several references, see for example \cites{SW,stein,Par} and the references therein.

This scenario is particularly useful for the applications to singular operators given by integration against a measure supported on appropriate sub-manifolds of $\R^n$. For this reason we shall show an alternative way to deduce the $(p_1,p_2 ')$-improving property, which is arguably the most crucial assumption in Theorem~\ref{th:SD} given above.

As mentioned in the introduction, several operators of interest, such as singular integrals given by convolution with measures supported on lower dimensional sets, satisfy a stronger $(p_1,p_2 ')$-improving property described in Lemma~\ref{lem:thenewlemma} below. Our first task here is to deduce the $(p_1,p_2 ')$-improving property of Definition~\ref{d.LPimprove} in that case.

\begin{lemma}\label{lem:thenewlemma} Consider the space $(\R^n,\rho,|\cdot|)$ and a dilation semi-group $\{\delta_{r}\}_{r>0}$ such that the quasi-norm $\rho$ is homogeneous with respect to $\delta_r$. For each $s\in\Z$ let $m^s$ be a Borel probability measure supported on the unit $\rho$-ball of $\,\R^n$ and define the Borel measures $\widehat{\mu_s}(\xi)\coloneqq \widehat{m^s}(\delta_{2^s} \xi)$ and $T(s)f\coloneqq f*\d\mu_s$. The following hold.
\begin{itemize}
	\item [(i)] Suppose that $|\widehat {m^0}(\xi)|\lesssim |\xi|^{-\beta}$ for some $\beta>0$ and that $T(0):L^{p_1}\to L^{p_2'}$ for every $(p_1 ^{-1},p_2 ^{-1})\in\Omega$ for some open set $\Omega$. Then $T\sim \sum_s T(s)$ is $(p_1,p_2')$-improving and $T^*\sim  \sum_s T(s) ^*$ is $(p_2,p_1 ')$-improving in the sense of Definition~\ref{d.LPimprove} for all indices in the same open set $\Omega$.
	\item [(ii)] If the conclusion of either Theorem~\ref{th:SD} or Theorem~\ref{th:max} holds for the truncations \[
	T_\sigma^\tau= \sum_{\sigma\leq s<\tau} T(s)\] or for the maximal operator \[ T_\star f=
	\sup_{s\in \mathbb Z} |T(s)f|\]  and every $(p_1 ^{-1},p_2 ^{-1})\in\Omega$ for some open set $\Omega$,  then $T(0):L^{p_1}\to L^{p_2 '}$  for all indices in the same open set $\Omega$.
\end{itemize}
\end{lemma}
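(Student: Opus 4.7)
My approach is first to use the dilation structure to reduce improving at scale $s$ to improving at scale $0$. Since $\rho$ is $\delta_{2^s}$-homogeneous and normalized $L^p$-averages pull back under $f\mapsto f\circ\delta_{2^s}$, the $(p_1,p_2')$-improving property of $T(s)$ at scale $2^s$ is equivalent to the analogous property of $T(0)=(\cdot)*m^0$ at scale $1$. Part (a) of Definition~\ref{d.LPimprove} then follows directly from the hypothesis $T(0):L^{p_1}\to L^{p_2'}$ combined with the unit support of $m^0$, which forces $T(0)(f\ind_L)$ to be supported in a bounded dilate $\gamma_2 L$ whenever $1\le r_L\le\gamma_1$, with normalized averages on $\gamma_2 L$ comparable to the $L^{p_2'}$-norm divided by a constant.

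The main content of (i) is the atomic estimate (b). For a $(p_2,r)$-atom $b$ with $r\le 1$, dualizing reduces matters to showing
\[
\|\tilde m^0*b\|_{p_1',\gamma_2 L}\lesssim r^\delta\,\langle b\rangle_{p_2,\gamma_2 L}
\]
for some fixed $\delta>0$. I would first establish the $L^2$-gained estimate $\|\tilde m^0*b\|_2\lesssim r^{\delta_0}\|b\|_2$ via Plancherel by splitting the frequency variable at $|\xi|\sim R$ and optimizing: the range $|\xi|\gtrsim R$ is handled by the Fourier decay $|\widehat{m^0}(\xi)|\lesssim|\xi|^{-\beta}$, and the range $|\xi|\lesssim R$ by the mean-zero cancellation $|\hat b(\xi)|\lesssim r|\xi|\,\|b\|_1$ combined with the scale-$r$ support of $b$. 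Since $(p_1^{-1},p_2^{-1})$ lies in the open set $\Omega$, I can then select a nearby pair $(\tilde p_1^{-1},\tilde p_2^{-1})\in\Omega$ positioned so that $(1/p_1,1/p_2')$ lies on the segment from $(1/2,1/2)$ to $(1/\tilde p_1,1/\tilde p_2')$, and complex-interpolate the no-gain hypothesis bound at this nearby pair against the $L^2\to L^2$ gained bound to land at the target pair with a H\"older-type gain $r^\delta$ for some $\delta>0$. Setting $\omega(t)=t^\delta$, a Dini modulus, concludes the argument for $T$; the conclusion for $T^*\sim\sum_s T(s)^*$ follows symmetrically since $\widehat{\tilde m^0}(\xi)=\overline{\widehat{m^0}(-\xi)}$ has the same decay rate.

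\textbf{Plan for (ii) and main obstacle.} From the sparse conclusion (in either the $\ell^1$-sum or maximal form), Proposition~\ref{prop:partial} applied at $s=0$ yields the local improving bound $\langle T(0)(f\ind_L)\rangle_{p_2',L}\lesssim\langle f\rangle_{p_1,L}$ for every unit $\rho$-ball $L$. To globalize to $T(0):L^{p_1}(\R^n)\to L^{p_2'}(\R^n)$, I cover $\R^n$ by a finitely overlapping family $\{L_\tau\}$ of unit $\rho$-balls as furnished by Lemma~\ref{l.cover}; translation invariance together with the unit support of $m^0$ gives $T(0)f\cdot\ind_{L_\tau}=T(0)(f\ind_{c_o L_\tau})\cdot\ind_{L_\tau}$; and summing the $p_2'$-th powers of the local $L^{p_2'}$-norms, combined with the elementary embedding $\ell^{p_1}\hookrightarrow\ell^{p_2'}$ (valid since $p_1\le p_2'$) and the bounded overlap of $\{c_o L_\tau\}$, produces $\|T(0)f\|_{p_2'}\lesssim\|f\|_{p_1}$. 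The hardest part is the interpolation step in (i): openness of $\Omega$ is essential to position the target pair on a non-degenerate interpolation segment and thereby convert the raw Fourier decay exponent $\beta$ into a positive H\"older gain $\delta$ uniformly at arbitrary interior indices; the remaining steps, including part (ii), are routine consequences of localization, doubling, and duality.
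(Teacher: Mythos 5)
Your plan for (ii) coincides with the paper's argument (Proposition~\ref{prop:partial} for the local improving bound, a bounded-overlap cover of $\R^n$ by unit $\rho$-balls via Lemma~\ref{l.cover}, localization from the unit support of $m^0$, and $\ell^{p_1}\hookrightarrow\ell^{p_2'}$ to sum up). For (i) the architecture also matches---rescale to $s=0$, read (a) off the $L^{p_1}\to L^{p_2'}$ hypothesis, and obtain (b) by interpolating a no-gain bound against an $L^2$-gain coming from the Fourier decay, with openness of $\Omega$ supplying a nondegenerate interpolation segment---but the \emph{mechanism} for the $L^2$-gain is genuinely different. The paper first establishes a bona fide operator bound for the translation commutator,
\[
\|T(0)-\mathrm{Tr}_yT(0):L^2\to L^2\|\le\sup_\xi\big|(e^{-iy\cdot\xi}-1)\widehat{m^0}(\xi)\big|\lesssim\rho(y)^{\varepsilon'},
\]
interpolates this against the (also operator) no-gain bound at a nearby pair in $\Omega$ to reach the continuity estimate~\eqref{eq:cont}, and \emph{only afterward} invokes the mean-zero property of the atom $b$ to rewrite $\langle T(s)(f\ind_L),b\rangle$ as an average of pairings with $T(s)-\mathrm{Tr}_yT(s)$. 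You instead put the cancellation of $b$ into the $L^2$ estimate from the start, proving $\|\tilde m^0*b\|_2\lesssim r^{\delta_0}\|b\|_2$ by Plancherel with a frequency split, and try to interpolate afterward.

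That last step has a subtle but genuine gap. The estimate $\|\tilde m^0*b\|_2\lesssim r^{\delta_0}\|b\|_2$ is \emph{not} an $L^2\to L^2$ operator-norm bound for a fixed linear operator: it holds only for $b$ mean-zero and supported in a fixed ball $B$ of radius $r$, whereas Riesz--Thorin (or Stein) interpolation requires operator norms on the full $L^p$ scale at both endpoints. So you cannot feed this restricted estimate directly against the unrestricted no-gain bound $\|\tilde m^0*(\cdot)\|_{L^{\tilde p_2}\to L^{\tilde p_1'}}\lesssim 1$. The standard repair is to pre-compose with the mean-subtraction $R_Bf\coloneqq(f-\langle f\rangle_B)\ind_B$, which is uniformly bounded on every $L^q$, and interpolate the honest linear operator $S\coloneqq\tilde m^0*(R_B\,\cdot)$: then $\|S\|_{L^2\to L^2}\lesssim r^{\delta_0}$, $\|S\|_{L^{\tilde p_2}\to L^{\tilde p_1'}}\lesssim 1$, and $Sb=\tilde m^0*b$ on $(p_2,r)$-atoms, so Riesz--Thorin yields the desired gain. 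With that correction your route works; the paper's commutator formulation sidesteps the issue entirely by producing an operator bound at the outset, which is the main thing it buys. A minor further point: in the anisotropic setting the low-frequency cancellation estimate should read $|\hat b(\xi)|\lesssim r^{c}|\xi|\,\|b\|_1$ for some structural $c>0$ rather than $r|\xi|\,\|b\|_1$, since $\rho(y)\le r$ controls $|y|$ only up to a power---this is cosmetic and does not affect the positivity of the final exponent.
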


\begin{proof} We begin with (i). The first step is to note the following scale-invariant continuity estimate: for every scale $s\in \Z$ and $y\in B(0,c2^s)$ and every $\rho$-ball $L$ of radius $r_L\eqsim 2^s$  
\begin{equation}\label{eq:cont}
	\left\langle   [ T(s)- \mathrm{Tr}_y T(s)] (f\ind_L)   \right\rangle_{p_2',c_oL}  \leq \omega\left(\rho(\delta^{-1}_{2^s} y)\right)   \langle f \rangle_{p_1,L},\qquad (p_1 ^{-1},p_2 ^{-1})\in\Omega,
\end{equation}	
where   $[\mathrm{Tr}_yg](x)\coloneqq g(x-y)$ for $x,y\in\R^n$, $\omega(t)=t^\eps$ for $t$ small and $\eps$ depends on the assumptions for $T$, the quasi-metric structure implied by $\rho$ and the exponents $p_1,p_2$. Now by translation invariance and scale invariance it suffices to prove \eqref{eq:cont} for $s=0$ and the ball $L=B(0,1)$. This estimate for $T(0)$ without the decay term $\omega\left(\rho(\delta^{-1}_{2^s} y)\right) $ is then a direct consequence of the assumption in (i).  On the other hand we have that
\[
\|T(s)- \mathrm{Tr}_y T(s):L^2(\R^n)\to L^2(\R^n)\| \leq \sup_{\xi\in\R^n} |(e^{-iy\cdot \xi}-1)\widehat{m^0}(\xi)|\lesssim \rho(y)^{\eps'}
\]
for some $\eps'>0$ depending on $\beta$ in the decay assumption for $\widehat{m^0}$ and the constants involved in the definition of $\rho$; this last dependence comes from the fact that we implicitly used the estimate $|y|\leq \rho(y)^c$ for $|y|\lesssim 1$. Since $\Omega$ is open we can use interpolation to conclude \eqref{eq:cont} with a power modulus of continuity for the same open set of indices $\Omega$. 

In order to complete the proof of (i) we show that \eqref{eq:cont} implies Definition~\ref{d.LPimprove}. Note that a. of Definition~\ref{d.LPimprove} is an immediate consequence of the $L^{p_1}\to L^{p_2 '}$ hypothesis for $T(0)$ and scale invariance. In order to prove b. of Definition~\ref{d.LPimprove} we let $b$ be a $(p_2,r)$-atom where $b\in L^{p_2}(B)$, $B=B(c_B,r)$ is a ball, $\int_{B} b=0$, and $r \leq 2^s$. Let $L$ be a ball of radius $r_L\eqsim 2^s$. By the mean zero condition on  $b$ we have  
\[
\begin{split}
\l T(s)	(f\ind_L), b  \r& = \l T(s)(f\ind_L),  b \r
\\
& =   \frac{1}{|B|}\int_{B} \bigg(\int_{B} [T(s)(f\ind_L )(x)-\mathrm{Tr}_{x-x'}T(s)(f\ind_L )(x)] b(x)\, \d x' \bigg)\, \d x
\\
&=   \frac{1}{|B|}\int_{B} \bigg(\int_{x-B} [T(s)(f\ind_L )(x)-\mathrm{Tr}_{y}T(s)(f\ind_L )(x)] b(x)\, \d y \bigg)\, \d x.
\end{split}
\]
Now we remember that $|\cdot|$ is Lebesgue measure and each $\rho$-ball $B$ has radius $r_B$ so that $|B|=|B(0,r)|\eqsim r_B ^\alpha$ for every $j$. Furthermore, for $x\in B$ we have that $x-B\subset B-B\subset B(0,c_2r)$ for some constant $c_2$ depending on the quasi-metric constant. These remarks and the previous calculation show that
\[
\begin{split}
\l T(s)	(f\ind_L), b  \r &=\frac{1}{|B(0,r)|} \int \bigg( \int_{B(0,c_2 r)}[T(s)(f\ind_L )(x)-\mathrm{Tr}_{y}T(s)(f\ind_L )(x )]  b (x)\ind_{x-B}(y)    \, \d y \bigg) \, \d x
\\
& =\frac{1}{|B(0,r)|}  \int_{B(0,c_2r)}  \Big\l [T(s)-\mathrm{Tr}_{y}T(s)], (f\ind_L ) \ind_{c_3 L}    b \ind_{y+B}  \Big\r  \, \d y
\end{split}
\]
with $c_3$ depending on the quasi-metric constant. Note that in the last line we are allowed to insert the indicator $\ind_{c_3L}$. Indeed, since $T$ satisfies \eqref{eq:localization} and $r_L\eqsim 2^s$, $r\leq 2^s$, we have  for $y\in B(0,c_2r)\subset B(0,c_2 2^s)$ that
\[
\mathrm{supp}[ T(s)-\mathrm{Tr}_{y}T(s)]\subset c_o L \cup (c_o L+y)\subset c_3 L
\]
for some constant $c_3$ depending on $c_2,c_o$, the implicit constants in $r_L\eqsim 2^s$, and the quasi-metric constant of $\d$. Using \eqref{eq:cont} we can now conclude
\[
\begin{split}
|\l T(s)	(f\ind_L), b  \r|&\lesssim_{\mathbb X} \intav_{B(0,c_2r)} \omega(\rho(\delta^{-1} _{2^s}y))|L|^{\frac{1}{p_2 '}-\frac{1}{p_1}}\|f\ind_L\|_{p_1}\big\|  \ind_{c_3L} b \big\|_{p_2} \, \d y
\\
& \lesssim \omega(c_2r/2^s) |L| \l f \r_{p_1,L} \l b\r_{p_2,c_3 L}.
\end{split}
\]
This proves that $T$ is $(p_1,p_2 ')$-improving with modulus $\omega(t)=t^\eps$ for some $\eps>0$ and indices $(p_1 ^{-1},p_2 ^{-1})\in \Omega$.  

We now prove (ii). By Proposition~\ref{prop:partial} we can conclude the scale-invariant estimate
\[
\l T(0)(f\ind_L)\r_{p_2 ',L}\lesssim \l f \r_{p_1,L}
\]
whenever $L$ is a ball of radius $r_L\eqsim 1$. But then one can decompose $\R^n$ into a finitely overlapping collection of balls $\{L_\tau\}_{\tau}$ with $r_{L_\tau} \eqsim 1$ for all $\tau$ which yields
\[
\begin{split}
\| T(0)f\|_{L^{p_2 '}} ^{p_2 '} &\lesssim \sum_\tau \int_{c_o L_\tau} |T(0)[f\ind_{L_\tau}]|^{p_2 '} \lesssim \sum_{\tau}|L_\tau|^{1-\frac{p_2'}{p_1}} \l f\r_{p_1,L_\tau} ^{\frac{p_2'}{p_1}}\eqsim \sum_\tau \Big( \int_{L_\tau}|f|^{p_1}\Big)^{\frac{p_2'}{p_1}} \lesssim \|f\|_{p_1} ^{p_2 '}
\end{split}
\]
since $|L_\tau|\simeq 1$ and $p_2'\geq p_1$.
\end{proof}

\subsubsection{$L^p$-improving properties for kernels with Fourier decay} With Lemma~\ref{lem:thenewlemma} in hand we now derive as an application the $L^p$-improving property for a class of operators which are given by convolution with measures supported on lower dimensional manifolds on $\R^n$. The main assumption is the suitable Fourier decay of the measures, which ultimately relies on suitable curvature assumptions on their support.

\begin{lemma}\label{lem:Lpimp*mu} Consider the space $(\R^n,\rho,|\cdot|)$ where $\rho$ is the quasi-norm given in \eqref{eq:rho} and $|\cdot|$ denotes the Lebesgue measure. For each $s\in \Z$ let $m^s$ be a Borel measure supported on the metric ball $B(0,1)$. Assume that there exists $\beta>0$ such that uniformly in $s\in \Z$ we have
\begin{equation}
\label{e:decay}
 \left|\widehat{m^s}(\xi)\right|\lesssim |\xi|^{-\beta}, \quad \xi\in\R^n,\qquad \int_{\R^n}|\d m^s|\leq 1.
\end{equation}
For each $s\in\Z$ define the scaled measure $\mu_s$ as $\widehat {\mu_s}(\xi)\coloneqq \widehat{m^s}(2^s\xi)$ for $\xi \in \R^n$ and $T(s)f\coloneqq f*\d\mu_s$. Then there exists a modulus of continuity $\omega(t)=|t|^{\varepsilon}$ for some $\varepsilon>0$ depending only on $n,\beta$ and the metric $\rho$ such that $T\sim \sum_s T(s)$ is $(p_1,p_2')$-improving at scale $s$ and $T^*\sim\sum_s T^*(s)$ is $(p_2,p_1' )$-improving in the sense of Definition~\ref{d.LPimprove} whenever $1< p_1,p_2<2$ and $\frac{\beta+n}{n}>\frac{1}{p_1}+\frac{1}{p_2}$.
\end{lemma}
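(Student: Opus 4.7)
The plan is to invoke Lemma~\ref{lem:thenewlemma}(i), for which the only ingredient needed beyond the standing assumptions is the $L^{p_1}(\R^n) \to L^{p_2'}(\R^n)$ boundedness of convolution with $m^s$, uniformly in $s$, on an open neighborhood of every admissible pair $(1/p_1,1/p_2)$. Since the total variation and Fourier-decay hypotheses on $m^s$ are uniform in $s$, it suffices to establish this boundedness for an arbitrary Borel measure $m$ with $\supp m \subset B_\rho(0,1)$, $\|m\|_{\mathrm{TV}} \leq 1$, and $|\widehat m(\xi)| \lesssim |\xi|^{-\beta}$, with constant depending only on these data. Once in hand, Lemma~\ref{lem:thenewlemma}(i) will deliver the $(p_1,p_2')$-improving of $T\sim \sum_s T(s)$ and the $(p_2,p_1')$-improving of $T^*$, with the power-type modulus $\omega(t) = t^\varepsilon$.

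To produce the convolution bound, I would run a standard Littlewood--Paley decomposition in frequency. Pick $\phi \in C_c^\infty(\R^n)$ equal to $1$ on the Euclidean unit ball and vanishing outside the Euclidean ball of radius $2$, set $\psi = \phi - \phi(2\,\cdot)$, and split $m = m_{-1} + \sum_{k \geq 0} m_k$ with $\widehat{m_{-1}} = \phi\widehat m$ and $\widehat{m_k}(\xi) = \psi(\xi/2^k)\widehat m(\xi)$. Convolution with the low piece $m_{-1}$ is harmless: its kernel is Schwartz, hence maps $L^p \to L^q$ for all $1 \leq p \leq q \leq \infty$. For each $k \geq 0$, the pieces $T_k f \coloneqq f\ast m_k$ satisfy four elementary estimates: by Plancherel, $\|T_k\|_{L^2\to L^2} \lesssim 2^{-k\beta}$; by the Hausdorff--Young bound on $\|m_k\|_{L^\infty} \leq \|\widehat{m_k}\|_{L^1}$ combined with the Fourier decay, $\|T_k\|_{L^1\to L^\infty} \lesssim 2^{k(n-\beta)}$; and since $m_k = \Psi_k \ast m$ with $\Psi_k = (\psi(\cdot/2^k))^\vee$ having uniformly bounded $L^1$ norm, Young's inequality yields $\|T_k\|_{L^p\to L^p} \lesssim 1$ for every $1 \leq p \leq \infty$.

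The next step is Riesz--Thorin interpolation among these four exponent corners $(1,0)$, $(1/2,1/2)$, $(1,1)$, $(0,0)$. Writing $(1/p_1,1/p_2') = \alpha(1,0)+\delta(1/2,1/2)+\gamma(1,1)+\mu(0,0)$ with nonnegative coefficients summing to $1$, one finds $\alpha = 1/p_1 + 1/p_2 - 1$ and retains freedom in the choice of $\delta$; taking the optimal $\delta = \min(2/p_1',2/p_2')$ keeps $\gamma,\mu \geq 0$ and gives
\[
\|T_k\|_{L^{p_1}\to L^{p_2'}} \lesssim 2^{k[\alpha(n-\beta) - \delta\beta]}.
\]
A direct arithmetic check shows that the exponent is strictly negative under the hypothesis $1<p_1,p_2<2$ together with $1/p_1 + 1/p_2 < (n+\beta)/n$; the freedom in $\delta$ is exactly what separates the full off-diagonal statement from the easier symmetric case $p_1=p_2$ (which reduces to $2/p < (n+\beta)/n$). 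Summing the resulting geometric series in $k$ and adding the $T_{-1}$ contribution delivers the uniform bound on convolution with $m$, and hence with each $m^s$.

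The sole genuine obstacle is the bookkeeping in the interpolation: one has to verify that the strict inequality $1/p_1 + 1/p_2 < (n+\beta)/n$, combined with $1 < p_1,p_2<2$, forces the exponent $\alpha(n-\beta) - \delta \beta$ to be strictly negative for some admissible choice of the interpolation weights, and hence for the optimal one. Beyond this, the proof is a clean feed of the uniform $L^{p_1}\to L^{p_2'}$ bound into Lemma~\ref{lem:thenewlemma}(i), which carries all of the metric-geometric complication.
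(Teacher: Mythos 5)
Your reduction to a uniform $L^{p_1}\to L^{p_2'}$ estimate for $T(0)$, followed by the appeal to Lemma~\ref{lem:thenewlemma}(i), matches the paper's route. However, the Littlewood--Paley and multi-corner interpolation argument you propose does \emph{not} produce that estimate on the full stated region; the arithmetic at the end fails. With the corners $(1,0),(1/2,1/2),(1,1),(0,0)$ at exponents $n-\beta,-\beta,0,0$, the weight $\alpha$ of $(1,0)$ is forced to equal $1/p_1-1/p_2'=1/p_1+1/p_2-1$ (no freedom there), while the weight $\delta$ of the Plancherel corner is capped at $\delta_{\max}=2\min(1-1/p_1,\,1/p_2')$. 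The best interpolated exponent is therefore $\alpha(n-\beta)-\delta_{\max}\beta$, and when $\beta<n$ this is \emph{not} strictly negative throughout $\{1/2<1/p_1,\,1/p_2<1,\,1/p_1+1/p_2<1+\beta/n\}$. For a concrete failure, take $n=2$, $\beta=1$, and $(1/p_1,1/p_2')=(0.95,0.48)$; then $1/p_1+1/p_2=1.47<3/2$, yet $\alpha=0.47$, $\delta_{\max}=0.1$, and the exponent equals $0.37>0$, so the geometric series in $k$ diverges. More generally, as $1/p_1-1/p_2'$ approaches $\beta/n$ with $\min(1-1/p_1,\,1/p_2')$ small, the exponent stays bounded away from negativity.

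The root cause is that the $L^1\to L^\infty$ corner at norm $\lesssim 2^{k(n-\beta)}$ is too lossy: all of the Fourier decay goes into controlling frequency mass, and none is converted into smoothing of the input. The paper's proof avoids this corner entirely and instead establishes the endpoint $L^{q_1}\to L^{q_2'}$ bound on the line $1/q_1-1/q_2'=\beta/n$ by a fractional-integration argument: Hardy--Littlewood--Sobolev reduces $\|T(0)g\|_{L^{q_2'}}$ to $\||\nabla|^\gamma T(0)g\|_{L^2}$ for suitable $\gamma$, Plancherel lets the decay $|\widehat m(\xi)|\lesssim|\xi|^{-\beta}$ absorb the factor $|\xi|^\gamma$, and the dual Sobolev embedding controls the remaining negative-order $L^2$ norm by $\|g\|_{L^{q_1}}$. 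Interpolating this endpoint against the trivial $L^2\to L^2$ bound fills out the entire open region claimed, without a Littlewood--Paley decomposition. To repair your argument you would need to import such a Sobolev-type gain, for instance by proving a sharper $L^{q_1}\to L^{q_2'}$ estimate on each frequency shell so that the off-diagonal exponent becomes summable, or simply replace the multi-corner interpolation with the paper's endpoint-plus-interpolation scheme.
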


\begin{proof} By Lemma~\ref{lem:thenewlemma} it suffices to prove that $\|T(0):L^{p_1}\to L^{p_2 '}\|\lesssim 1$ for $p_1,p_2$ as in the conclusion of the lemma. This in turn will follow by interpolation with the easy $L^2\to L^2$ bound and the estimate
\begin{equation}\label{eq:endpoint}
\|T(0) g\|_{L^{q_2 {'}}(\R^n)}\lesssim \|g\|_{L^{q_1}(\R^n)},\qquad 1<q_1\leq 2, \quad 2\leq q_2 '<\infty, \quad \frac{1}{q_1}+\frac{1}{q_2}=1+\frac\beta n.
\end{equation}
Assume first that $q_2 ' >2$. Using the Hardy-Littlewood-Sobolev inequality  we get the following whenever $\beta\geq \gamma >0$ and $\frac{1}{2}= \frac{1}{q'_{2}}+\frac{\gamma}{n}$
\[
\begin{split}
 \|[T(0)g]\|_{L^{q'_2}(\R^n)}& = \||\nabla|^{-\gamma}(|\nabla|^\gamma [T(0)g])\|_{L^{q'_2}(\R^n)} \eqsim_{s,n} \||x|^{\gamma-n}* (|\nabla|^\gamma [T(0)g])\|_{L^{q'_2}(\R^n)}
 \\
& \lesssim \||\nabla|^\gamma [T(0)g]\|_{L^{2}(\R^n)}.
\end{split}
\]
Now the right hand side in the display above can be further estimated as follows using Plancherel's theorem
\[
\begin{split}
\||\nabla|^\gamma [T(0)g]\|_{L^{2}(\R^n)} ^2 &=\int_{\R^n} |\hat g(\xi)|^2 |\xi|^{2\gamma} |\widehat{\mu_1}(\xi)|^{2}\,\d \xi	
\\
& \leq \int_{|\xi|\leq 1}|\hat g(\xi)|^2 \,\d \xi+\int_{|\xi|>1}  |\hat g(\xi)|^2 |\xi|^{2(\gamma-\beta)}\,\d \xi.
\end{split}
\]
The first summand above can be estimated by H\"older's inequality combined with the Hausdorff-Young inequality by $\|g\|_{L^{q_1}}$ for any $1\leq q_1\leq 2$. Furthermore, if $q_1=2$ then the claim follows by the estimate above for $\beta=\gamma$. Now suppose that $1<q_1< 2$ and choose $0<\gamma<\beta$ so that $\beta-\gamma = n(1/q_1-1/2)$ which is always possible. Then we can estimate
\[
\int_{|\xi|>1}  |\hat g(\xi)|^2 |\xi|^{2(\gamma -\beta)}\,\d \xi\lesssim \int_{\R^n}  |\hat g(\xi)|^2 (1+|\xi|^2)^{(\gamma-\beta)}\,\d \xi \lesssim \|g\|_{L^{q_1}(\R^n)} ^2
\]
by the dual Sobolev embedding theorem. Finally if $q_2 ' =2$ then the claim follows directly by the $L^2$-estimate above and the dual form of the Sobolev embedding theorem which embeds $H^{-\beta}$ into $L^{q_1}$ for $\beta=n(1/q_1-1/2)$. This proves \eqref{eq:endpoint} and thus the $(p_1,p_2 ')$-improving property for $T\sim\sum_s T(s)$. As $T$ is essentially self-adjoint we get for free that $T^*$ satisfies the dual of \eqref{eq:endpoint} and is thus $(p_2,p_1')$-improving and the proof is complete.
\end{proof}

We recall the known fact that operators given at each \emph{dyadic scale} by a convolution with a measure that has Fourier decay as in Lemma~\ref{lem:Lpimp*mu} are bounded on $L^p(\R^n)$. 

\begin{lemma}\label{lem:Lpbound*mu}  Let $\{m^s\}_{s\in\Z}$ be a sequence of Borel measures all supported on the metric ball $B(0,1)$ and such that $\int_{\R^n}|\d m^s|\leq 1$. Assume that there exists $\beta>0$ such that for every $s\in \Z$
\[
 \left|\widehat{m^s}(\xi)\right|\lesssim |\xi|^{-\beta} , \qquad \xi\in\R^n.
\]
For each $s\in\Z$ define the scaled measure $\mu_s$ as $\widehat {\mu_s}(\xi)\coloneqq \widehat{m^s}(2^s\xi)$ for $\xi \in \R^n$.
\begin{itemize}
\item[(i)] The operator $T_\star f\coloneqq \sup_s |f*\d \mu_s|$ extends to a bounded operator on $L^p(\R^n)$ for all $1<p<\infty$.
\item[(ii)] If in addition we have that $\int_{\R^n}\d m^s=0 $ for all $s\in \Z$ then $Tf\coloneqq \sum_{s\in\Z}f*\d\mu_s$ extends to a bounded operator on $L^p(\R^n)$ for all $1<p<\infty$. The same holds uniformly for the truncations $T_{\sigma} ^\tau f =\sum_{\sigma\leq s <\tau}T(s)f$.
\end{itemize}	
\end{lemma}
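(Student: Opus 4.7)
The statement is an anisotropic restatement of the classical maximal and multiplier theorems of Duoandikoetxea and Rubio de Francia \cite{DR}, and my plan is to follow their scheme. The auxiliary tools I will invoke are the $\rho$-ball Hardy--Littlewood maximal operator $M$, bounded on $L^p(\R^n)$ for $1<p\leq \infty$ thanks to the anisotropic doubling; the anisotropic Littlewood--Paley decomposition $\{\Delta_j\}_{j\in\Z}$ adapted to $\{\delta_{2^j}\}$; and the Fefferman--Stein vector-valued maximal inequality. All three are classical in this setting, see \cite{SW,stein} and the references therein.

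For part (ii), the cancellation $\int\d m^s=0$ together with $\supp m^s\subset B(0,1)$ and $\int |\d m^s|\leq 1$ yields the two-sided Fourier bound
\[
\bigl|\widehat{\mu_s}(\xi)\bigr|\lesssim \min\bigl(2^s|\xi|,\,(2^s|\xi|)^{-\beta}\bigr), \qquad \xi\in\R^n,
\]
uniformly in $s$, where $2^s|\xi|$ is interpreted in the appropriate anisotropic sense. I then decompose $T=\sum_{k\in\Z}T_k$ with $T_k f\coloneqq\sum_{s\in\Z}(\Delta_{s+k}f)*\d\mu_s$. Plancherel combined with the bounded overlap of the frequency supports of $\{\Delta_{s+k}\}_s$ produces the geometric decay $\|T_k\|_{2\to 2}\lesssim 2^{-|k|\delta}$ with $\delta\coloneqq\min(1,\beta)$, since on $\supp\widehat{\Delta_{s+k}}$ one has $2^s|\xi|\sim 2^{-k}$ and therefore $|\widehat{\mu_s}(\xi)|\lesssim 2^{-|k|\delta}$. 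For a uniform-in-$k$ $L^p$ bound, the summands of $T_kf$ have almost-disjoint anisotropic Fourier annuli, so by the Littlewood--Paley square-function characterization
\[
\|T_k f\|_p\lesssim \Bigl\|\bigl(\textstyle\sum_s|(\Delta_{s+k}f)*\d\mu_s|^2\bigr)^{1/2}\Bigr\|_p,
\]
and I control the right-hand side by the Fefferman--Stein vector-valued inequality applied to a maximal function majorizing $|\cdot*\d\mu_s|$, followed by the Littlewood--Paley characterization of $\|f\|_p$. Interpolating the $L^2$ geometric decay against the uniform $L^p$ bound makes $\sum_k T_k$ absolutely convergent in $L^p$-operator norm, and the same argument applied to partial sums in $s$ gives uniform bounds on the truncations $T_\sigma^\tau$.

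For part (i), the absent cancellation is replaced by a low-frequency subtraction. Fix a Schwartz $\rho$-radial $\phi$ with $\widehat\phi(0)=1$ and $\widehat\phi$ supported in a small neighborhood of the origin, and set $\phi^s\coloneqq \widehat{m^s}(0)\phi_s$, where $\phi_s$ is the $\delta_{2^s}$-dilate of $\phi$. Since $\widehat{\phi^s}(0)=\widehat{\mu_s}(0)$, the difference $\d\mu_s-\phi^s$ enjoys exactly the two-sided Fourier estimate from (ii), and
\[
T_\star f\leq \sup_s|f*\phi^s|+\Bigl(\sum_s|f*(\d\mu_s-\phi^s)|^2\Bigr)^{1/2}\eqqcolon M_\phi f+G f.
\]
The first term is pointwise dominated by $Mf$ because $|\widehat{m^s}(0)|\leq 1$ and $\phi^s$ is an $L^1$-normalized approximation of identity; the square function $Gf$ is handled on every $L^p$, $1<p<\infty$, by the same Littlewood--Paley-plus-vector-valued-maximal argument as in (ii), now applied to the single-indexed family $\{\d\mu_s-\phi^s\}_s$. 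The main technical obstacle in both parts is the vector-valued Fefferman--Stein step: the naive pointwise bound $|g*\d\mu_s|\lesssim Mg$ fails for genuinely singular $\d\mu_s$, and must be recovered by splitting $\d\mu_s=\nu_s^{\mathrm{sm}}+\nu_s^{\mathrm{sing}}$ into a smooth $L^1$-normalized piece (dominated by $M$) and a Fourier-small remainder (absorbed via Plancherel and the $|\xi|^{-\beta}$ hypothesis). Once this splitting is in place, the remainder of the argument is the standard Duoandikoetxea--Rubio de Francia scheme and yields parts (i) and (ii) simultaneously.
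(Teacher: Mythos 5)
The paper does not include a proof of Lemma~\ref{lem:Lpbound*mu}; it refers the reader to Stein's treatise, which treats exactly the Duoandikoetxea--Rubio de Francia scheme your proposal follows. So your chosen route is the intended one, and the high-frequency decomposition $T=\sum_k T_k$, the Plancherel-based geometric decay $\|T_k\|_{2\to2}\lesssim 2^{-\delta|k|}$, and the low-frequency subtraction $\mu_s\mapsto \mu_s-\phi^s$ in part (i) are all correct ingredients. However, there is a genuine gap in the step you describe as "the Fefferman--Stein vector-valued inequality applied to a maximal function majorizing $|\cdot*\d\mu_s|$".

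You correctly observe that there is no pointwise bound $|g*\d\mu_s|\lesssim Mg$ for a singular measure, but the remedy you propose --- split $\d\mu_s=\nu_s^{\mathrm{sm}}+\nu_s^{\mathrm{sing}}$ and handle the singular remainder "via Plancherel and the $|\xi|^{-\beta}$ hypothesis" --- does not close the argument: Plancherel only yields an $L^2$ estimate, whereas what is required at this juncture is a \emph{uniform-in-$k$ $L^{p}$ bound on $T_k$ for some $p\neq 2$}, precisely so that interpolation with the geometrically decaying $L^2$ bound produces summability in $k$. The actual mechanism in Duoandikoetxea--Rubio de Francia is a vector-valued inequality of the form
\begin{equation*}
\Bigl\|\bigl(\textstyle\sum_s|\mu_s*g_s|^2\bigr)^{1/2}\Bigr\|_{p}
\lesssim \bigl\|\sup_s |\mu_s|*|\,\cdot\,|\bigr\|_{L^{(p/2)'}\to L^{(p/2)'}}^{1/2}
\Bigl\|\bigl(\textstyle\sum_s|g_s|^2\bigr)^{1/2}\Bigr\|_{p},\qquad p\geq 2,
\end{equation*}
obtained by duality, Cauchy--Schwarz in the measure, and H\"older --- \emph{not} by the Fefferman--Stein inequality for the Hardy--Littlewood maximal function. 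Its operator constant involves the norm of the maximal operator built from $|\mu_s|$ on a \emph{different} Lebesgue space, and this is what forces the bootstrap: one starts at $p=2$, where the square function is controlled directly by Plancherel, proves a first maximal bound there, feeds it back into the vector-valued estimate to enlarge the range, and iterates until the whole interval $1<p<\infty$ is captured (which the positivity of $\beta$ guarantees after finitely many steps). Your proposal presents this as a single pass, omitting the iteration, and also reverses the logical order: in the standard treatment (ii) is deduced \emph{after} (i) (or at least after a maximal estimate for $\sup_s|\mu_s|*|f|$), whereas you derive (i) by invoking "the same argument as in (ii)," which is circular. To repair the proof you should (a) state and prove the Duoandikoetxea--Rubio de Francia vector-valued lemma above (or cite it explicitly as a black box), (b) carry out the bootstrap for the maximal operator in (i) first, and (c) then use the resulting maximal bounds as input in the uniform $L^p$ estimate for $T_k$ in (ii). A secondary point worth flagging: the vector-valued lemma sees the total-variation measures $|\mu_s|$, not $\mu_s$, and the Fourier-decay hypothesis on $\widehat{m^s}$ does not automatically transfer to $\widehat{|m^s|}$; in the paper's applications this is harmless because $|m^s|$ is dominated by an explicit positive measure with the same decay, but in the abstract formulation one should either add a positivity assumption or dominate $|m^s|$ by such a measure before running the scheme.
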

We omit the well known proof of Lemma~\ref{lem:Lpbound*mu} and refer the reader to \cite[\S XI.2.5]{stein} for the maximal version of (i), and to \cite[\S XI.4.4]{stein} for the singular integral version of (ii) above. 

With the ingredients above it is now easy to conclude a sparse domination theorem for operators given by convolutions with suitable measures possessing Fourier decay as above. 

\begin{theorem}\label{thm:sparseFT} Consider the space $(\R^n,\rho,|\cdot|)$ where $\rho$ is a quasi-norm and $|\cdot|$ denotes the Lebesgue measure. For each $s\in \Z$ let $m^s$ be a Borel measure supported on the metric ball $B(0,1)$. Assume that there exists $\beta>0$ such that 
\[
\sup_{s\in \mathbb Z}\int_{\R^n}|\d m^s|\leq 1, \qquad
\sup_{s\in \mathbb Z} \sup_{\xi \in \R^n} |\xi|^\beta\left|\widehat{m^s}(\xi)\right|\leq  1
\]
 For each $s\in\Z$ define the scaled measure $\mu_s$ as $\widehat {\mu_s}(\xi)\coloneqq \widehat{m^s}(\delta_{2^s}\xi)$ and let 
\[
T_{\star,\sigma,\tau} f \coloneqq \sup_{\sigma<s\leq \tau} |f*\d\mu_s|,\qquad T_\sigma ^\tau f = \sum_{\sigma\leq s<\tau } f* d\mu_{s}.
\]
\begin{itemize}
\item[(i)] For all $f_1,f_2\in\mathcal S(\R^n)$ with compact support and $\sigma,\tau\in\Z$ with $\sigma<\tau$ there exists a sparse collection $\mathcal B_{\star,\sigma,\tau}$ consisting of balls $B$ with $2^\sigma \leq r_B \leq \tau$ such that
\[
|\l T_{\star,\sigma,\tau} f_1,f_2 \r|\lesssim \sum_{B\in\mathcal B_{\star,\sigma,\tau}}|B|\l f_1\r_{p_1,B}\l f_2 \r_{p_2B}
\]
whenever $\frac{\beta+n}{n}>\frac{1}{p_1}+\frac{1}{p_2}\geq 1$.
\item[(ii)] If in addition we have that \[\int\d m^s=0\qquad \forall s\in\Z,\] then  for all $\sigma,\tau\in\Z$ with $\sigma<\tau$ and for all $f_1,f_2\in\mathcal S(\R^n)$ with compact support there exists a sparse collection $\mathcal B_{\sigma,\tau}$  consisting of balls $B$ with $2^\sigma\leq r_B \leq 2^\tau$ such that
\[
|\l T_{\sigma} ^\tau f_1,f_2 \r|\lesssim \sum_{B\in\mathcal B}|B|\l f_1\r_{p_1,B}\l f_2 \r_{p_2B}
\]
whenever $1\leq p_1,p_2\leq 2$ and $\frac{\beta+n}{n}>\frac{1}{p_1}+\frac{1}{p_2}\geq 1$. 
\end{itemize}
The corresponding conclusions hold for the untruncated versions with sparse collections consisting of balls of all radii.
\end{theorem}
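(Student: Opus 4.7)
The plan is to deduce Theorem~\ref{thm:sparseFT} by feeding the single-scale decomposition $T(s)f\coloneqq f*d\mu_s$ into Theorem~\ref{th:SD} (for (ii)) and Theorem~\ref{th:max} (for (i)). All of the structural and quantitative hypotheses of those abstract results are supplied by the lemmas proved earlier in this section.

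First, \emph{localization}: since $m^s$ is supported in the unit $\rho$-ball, the homogeneity $\widehat{\mu_s}(\xi)=\widehat{m^s}(\delta_{2^s}\xi)$ gives $\supp\mu_s\subset B(0,C_\rho 2^s)$, so that $\supp T(s)(f\ind_L)\subset c_o L$ for every ball $L$ with $r_L\geq 2^s$ by the quasi-triangle inequality. This is \eqref{eq:localmax}, and summing over $s$ yields \eqref{eq:localization}. Second, \emph{uniform norm bounds}: in case (i), the total-variation hypothesis trivially gives $\|T_\star\|_{L^\infty\to L^\infty}\leq 1$, while in case (ii) the mean-zero assumption places us in the scope of Lemma~\ref{lem:Lpbound*mu}(ii), which delivers \eqref{eq:unifbdd} for the truncations $T_\sigma^\tau$ uniformly in $\sigma,\tau$. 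Third, and most substantively, the $(p_1,p_2')$-improving hypothesis of Definition~\ref{d.LPimprove} with a polynomial (hence Dini) modulus of continuity, uniformly in $s\in\Z$ and in the stated range of exponents, is exactly the content of Lemma~\ref{lem:Lpimp*mu}; the corresponding $(p_2,p_1')$-improving estimate for $T^*$ follows automatically because each $T(s)$ is essentially self-adjoint as convolution with a signed measure.

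With these three hypotheses in hand, Theorems~\ref{th:max} and~\ref{th:SD} produce the claimed sparse domination for the truncated operators $T_{\star,\sigma,\tau}$ and $T_\sigma^\tau$, with sparse collections consisting of $\rho$-balls of radii in $[2^\sigma,2^\tau]$. The untruncated statements recorded at the end of the theorem follow from the second assertions of those theorems via the representation \eqref{eq:T}, which realizes $T$ (respectively $T_\star$) as a weak limit of its truncations modulo a bounded pointwise multiplier; the sparse collection is then the exhaustion limit furnished by the abstract theorems. The main obstacle in this plan is therefore encoded in Lemma~\ref{lem:Lpimp*mu}: upgrading the Fourier decay of $m^s$ into single-scale $L^{p_1}\to L^{p_2'}$ boundedness uniformly in $s$ and then, via Lemma~\ref{lem:thenewlemma}, into the full improving condition with a quantitative modulus. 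Once that input is available, the remaining steps are a mechanical bookkeeping against the abstract sparse domination machinery built in Section~\ref{s:mr}.
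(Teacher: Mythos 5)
Your proposal is correct and follows precisely the route the paper (implicitly) takes: the paper itself offers no detailed argument at this point, merely observing that the theorem follows ``easily'' from Lemmas~\ref{lem:thenewlemma}, \ref{lem:Lpimp*mu}, and \ref{lem:Lpbound*mu} together with Theorems~\ref{th:SD} and~\ref{th:max}. You have correctly identified and verified all three required inputs (localization via the support of $\mu_s$, uniform $L^p$/$L^\infty$ bounds via Lemma~\ref{lem:Lpbound*mu} and the total-variation hypothesis, and the $(p_1,p_2')$-improving property with power-Dini modulus via Lemma~\ref{lem:Lpimp*mu}), so the proof is essentially what the paper intends the reader to supply.
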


\subsubsection{Sparse domination for singular Radon transforms} We culminate the considerations of this section by describing a class of singular Radon transforms given by convolution with measures supported on polynomial subvarieties of $\R^n$. To make this specific we fix some positive integer $d$ and consider the polynomial map
\begin{equation}\label{eq:rhogamma}
\gamma:\R^k \to \R^N,\qquad \gamma(t)=(t^\alpha)_{1\leq |\alpha|\leq d},
\end{equation}
where $N$ is the number of monomials $t^\alpha=t^{\alpha_1}\cdots t^{\alpha_k}$ with $|\alpha|=\alpha_1+\cdots+\alpha_k\leq d$. It is convenient to describe points $x\in \R^N$ in the form $x=(x_\alpha)_{1\leq |\alpha|\leq d}$.  With these conventions in hand we define dilations $\delta_r((x_\alpha)_\alpha)\coloneqq ((r^{|\alpha|}x_\alpha)_\alpha)$ and $\rho$ by 
\[
\rho(x) =\bigg( \sum_{1\leq |\alpha| \leq d} |x_\alpha| ^\frac{2}{|\alpha|}\bigg)^\frac12,\qquad x=(x_\alpha)_\alpha \in \R^N,
\]
which is just formula \eqref{eq:rho} in current notation. We can always compare the quasi-norm $\rho$ with the Euclidean one by means of
\begin{equation}\label{eq:comparison}
\begin{cases}
 |x|^{\frac1d}\lesssim \rho(x) \lesssim   |x|,& \quad\text{if}\quad |x|> 1,\vspace{.5em}
\\
 |x| \lesssim \rho(x) \lesssim |x|^{\frac1d},&\quad \text{if}\quad |x|\leq 1.
\end{cases}
\end{equation}
The homogeneous dimension of $(\R^N,\rho,|\cdot|)$ is
\[
\Delta\coloneqq \sum_{1\leq |\alpha|\leq d} |\alpha|.
\]
Note the following basic behavior of $\rho(\gamma(t))$ with respect to dilations: for every $r>0$ we have
\begin{equation}\label{eq:dil}
r\rho(\gamma(t))= \rho(\delta_r \gamma(t))=\rho(\gamma(rt)),\qquad rt=(rt_1,\ldots,rt_k)\in\R^k.
\end{equation}
Now let $\Omega:\mathbb S^{k-1}\to \R$ be a $0$-homogeneous function with mean zero on $\mathbb S^{k-1}$ and  $\Omega \in C^{\infty}(\mathbb S^{k-1})$.  We define the singular Radon transform
\begin{equation}
\label{e:meo}
T_{\gamma}f(x)\coloneqq \mathrm{p.v.} \int_{\R^k} f(x-\gamma(t))\frac{\Omega(t)}{|t|^k}\d t,\qquad x\in\R^N,
\end{equation}
and we have that
\[
	T_{\gamma}f(x)=\sum_{s\in\Z}[T(s)f](x) \coloneqq  \sum_{s\in \Z}\int_{\R^k} f(x-\gamma(t))\psi\left(\frac{|t|}{2^s}\right)\frac{\Omega(t)\d t}{|t|^k} ,\qquad x\in\R^N,
\]
with $\psi\in\mathcal S(\R)$, $0\leq \psi\leq 1$, $\psi$ is compactly supported in $[1/2,4]$ and identically one in $[1,2]$ and such that $\sum_{s\in\Z} \psi\big(\frac{|t|}{2^s}\big)\eqsim 1$.
Consider the Borel measure $m$ defined as 
\[
\int_{\R^N} \phi(y) \,\d m(y) = \int_{\R^k} \phi(\gamma(t))\, \frac{\Omega_1(t)}{|t|^k} \d t\qquad \forall \phi \in\mathcal S(\R^n),\qquad \Omega_1(t) \coloneqq \Omega(t) \psi\big(|t|\big),
\]
which by \eqref{eq:comparison} is compactly supported in some $\rho$-ball in $\R^N$ of fixed radius and centered at $0$. By \eqref{eq:dil} we have that 
\[
T_\gamma f = \sum_{s\in \Z} T(s) f,\qquad T(s)f= f* \d\mu_s,\qquad \widehat{\d \mu_s}(\xi)\coloneqq \widehat m(\delta_{2^s} \xi),\qquad \xi \in \R^N.
\]
For each $s\in \Z$ the measure $\d\mu_s$ is a rescaling of the measure $\d\mu_0=m$ 
\[
\int_{\R^N}\phi(x)\, \d\mu_s(x)=\int_{\R^k} \phi(\delta_{2^s}\gamma(t))\frac{\Omega_1(t)}{|t|^k}\,\d t=\int_{\R^k} \phi(\gamma(t))\frac{\Omega(t)}{|t|^k}\psi\big(\frac{|t|}{2^s}\big)\,\d t\qquad \forall \phi \in\mathcal S(\R^N).
\]
With these definitions in mind and using \eqref{eq:dil} and \eqref{eq:comparison} it is routine to verify that $T$ obeys \eqref{eq:localization}. 
We also record the basic calculation
\[
\widehat{\d \mu_s}(\xi) = \int_{\R^k} e^{-i\xi \cdot \gamma(2^st)}\frac{\Omega(t)}{|t|^k}\psi(|t|)\,\d t =  \int_{\mathbb S^{k-1}} \bigg(\int_0 ^\infty e^{-i\xi \cdot \gamma(2^sr t')}\frac{\psi(r)\d r}{r} \bigg)\, \Omega(t')\d\sigma_{k-1} ,\quad \xi \in \R^N,
\]
whence 
\[
\int_{\R^N}\d\mu_s=\int_{\R^N} \d m = \widehat m(0) \eqsim \int_{\mathbb S^{k-1}} \Omega(t')\,\d\sigma_{k-1}(t')=0
\]
by our cancellation assumption for $\Omega$.  The previous calculation also implies that for all $s\in\Z$ we have $\|m\|=\|\d\mu_s \|\lesssim \|\Omega\|_{L^1(\mathbb S^{k-1})}$. 
The companion maximal operator is given as
\[
\mathcal M f(x)\coloneqq \sup_{r>0} \frac{1}{r^k} \int_{|t|\leq r} |f(x-\gamma(t))|\, \d t.
\]
As before letting $\psi_{2^s}(|t|)\coloneqq 2^{-ks}\psi(|t|/2^s)$ we can bound
\[
\mathcal M_{\gamma} f(x) \lesssim \sup_{s\in \Z} \frac{1}{2^{sk}} \int_{2^{s-1}\leq|t|<2^s}|f(x-\gamma(t)|\,\d t \lesssim \int|f(x-\gamma(t)|\psi_{2^s}(|t|)\,\d t   \eqqcolon \sup_s|f|*\d\nu_s
\]
with
\[
\int_{\R^n}\phi(t)\d\nu_s = \frac{1}{2^{sk}}\int_{\R^k} \phi(\gamma(t))\psi_{2^s}(|t|)\,\d t\qquad \forall \phi \in\mathcal S(\R^n).
\]
As before it can be easily seen that $\d \nu_s$ is the $\delta_{2^s}$ scaling of the measure $\nu=\nu_0$  so that $\widehat{\d\nu_s}(\xi)=\widehat{\d\nu}(\delta_{2^s} \xi)$ and $\d\nu$ is a compactly supported Borel measure with $\|\d\nu\|=\|\d\nu_s\|\lesssim 1$ for all $s\in\Z$.

We only miss one main ingredient in order to apply Theorem~\ref{thm:sparseFT} for the singular and maximal Radon transforms, $T$ and $\mathcal M$, respectively; that is,  the Fourier decay of the generating measure. However such estimates are standard in the context above since the polynomial map $\gamma$ is of finite type $d$.

\begin{lemma}\label{lem:fourierdecay} Let $\d \mu,\d\nu$ denote the Borel measures defined above. Then
	\[
	|\widehat{\d \nu}(\xi)|\lesssim |\xi|^{-\frac{1}{d}},\qquad |\widehat{\d\mu}(\xi)|\lesssim  |\xi|^{-\frac1d},\qquad \xi \in\R^N.
	\]	
\end{lemma}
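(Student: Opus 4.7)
By the definitions of $\d\mu$ and $\d\nu$ given just before the statement, both Fourier transforms can be rewritten as real oscillatory integrals on $\R^k$ with polynomial phase. More precisely, with $\gamma(t)=(t^\alpha)_{1\leq |\alpha|\leq d}$, we set, for $\xi=(\xi_\alpha)_\alpha\in\R^N$,
\[
P_\xi(t)\coloneqq \xi\cdot \gamma(t)= \sum_{1\leq |\alpha|\leq d} \xi_\alpha t^\alpha,\qquad t\in\R^k,
\]
so that
\[
\widehat{\d\nu}(\xi)=\int_{\R^k} e^{-iP_\xi(t)}\psi(|t|)\,\d t,\qquad \widehat{\d\mu}(\xi)=\int_{\R^k}e^{-iP_\xi(t)}\,\psi(|t|)\frac{\Omega(t)}{|t|^k}\,\d t.
\]
Since $\psi$ is compactly supported in $[1/2,4]$, in both cases the amplitude is a $C^\infty(\R^k)$ function supported in an annulus of $\R^k$ bounded away from the origin, uniformly in $\xi$. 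Therefore it suffices to show the uniform bound
\[
\sup_{\phi\in\mathcal F}\left|\int_{\R^k} e^{-iP_\xi(t)}\phi(t)\,\d t\right|\lesssim |\xi|^{-\frac 1d},
\]
where $\mathcal F$ is the family of smooth amplitudes so produced.

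The regime $|\xi|\leq 1$ is trivial since then $|\widehat{\d\mu}(\xi)|+|\widehat{\d\nu}(\xi)|\lesssim \|\d\mu\|+\|\d\nu\|\lesssim 1 \eqsim |\xi|^{-1/d}$. In the regime $|\xi|\geq 1$, the core step is the multidimensional van der Corput estimate for polynomial phases: for any real polynomial $P$ on $\R^k$ of degree at most $d$ containing a monomial $c\, t^\alpha$ with $|\alpha|=m\geq 1$, and for any $\phi\in C_c^\infty(\R^k)$,
\[
\left|\int_{\R^k} e^{-iP(t)}\phi(t)\,\d t\right|\lesssim_{d,k,\phi} |c|^{-\frac 1m},
\]
see for instance Stein \cite[Ch.\ VIII, \S2, Prop.\ 5]{stein}. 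I would apply this with $c=\xi_\alpha$ for an index $\alpha$ realizing $|\xi_\alpha|=\max_\beta |\xi_\beta|$; since $|\xi|^2=\sum_\beta |\xi_\beta|^2$, we have $|\xi_\alpha|\geq N^{-1/2}|\xi|$, and the decay rate $|\xi_\alpha|^{-1/|\alpha|}\leq |\xi|^{-1/d}$ follows because $|\alpha|\leq d$ and $|\xi|\geq 1$.

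The only genuine work is to make sure the implicit constant in the van der Corput estimate can be chosen uniformly over the family $\mathcal F$ of amplitudes that arise; but these amplitudes are controlled in $C^M(\R^k)$ for every $M$ by constants depending only on $\psi$ and $\Omega\in C^\infty(\mathbb S^{k-1})$, so the uniformity is automatic from the standard proof of the cited estimate via integration by parts in a well-chosen direction. Combining the two regimes yields the claimed bound $|\widehat{\d\mu}(\xi)|+|\widehat{\d\nu}(\xi)|\lesssim |\xi|^{-1/d}$ for all $\xi\in\R^N$.
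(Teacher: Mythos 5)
The paper itself does not spell out a proof; it simply appeals to Stein's book (\S XI.2.2 and \S VII.3.2), i.e.\ to the fact that $t\mapsto\gamma(t)$ is of finite type $\leq d$ and to the resulting oscillatory--integral estimates for amplitudes supported away from the origin. Your plan goes by the same route, just made explicit: you rewrite $\widehat{\d\mu}$ and $\widehat{\d\nu}$ as oscillatory integrals with polynomial phase $P_\xi(t)=\xi\cdot\gamma(t)$ and amplitude controlled in $C^M$ uniformly in $\xi$, and then invoke a polynomial van der Corput estimate. The reduction, the treatment of $|\xi|\leq 1$, the uniformity of the implicit constants in the amplitude family, and the final scaling are all correct.

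There is, however, one imprecision in the key oscillatory--integral estimate as you state it. You claim that if $P$ contains a monomial $c\,t^\alpha$ with $|\alpha|=m$ then $\bigl|\int e^{-iP}\phi\bigr|\lesssim |c|^{-1/m}$. This is false: higher-degree monomials can cancel the contribution of $c\,t^\alpha$ to $\partial^\alpha P$ on the support. Already in one variable, $P(t)=ct-\tfrac{c}{2}t^2$ has a first-order monomial with coefficient $c$, yet the phase is stationary at $t=1$ and the oscillatory integral only decays like $|c|^{-1/2}$, i.e.\ the rate is governed by $\deg P$, not by the degree of the chosen monomial. The correct version of the multidimensional van der Corput estimate for polynomial phases (which is what Stein's finite--type references ultimately provide) is
\[
\Bigl|\int_{\R^k} e^{-iP(t)}\phi(t)\,\d t\Bigr|\ \lesssim_{d,k,\phi}\ \Bigl(\sum_{1\leq |\beta|\leq d}|c_\beta|\Bigr)^{-1/d},
\]
with $d=\deg P$. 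Fortunately this weaker (but true) bound suffices for your argument: with $c_\beta=\xi_\beta$ one has $\sum_\beta|\xi_\beta|\gtrsim|\xi|$, so the right-hand side is $\lesssim |\xi|^{-1/d}$, which is exactly the decay rate required. Thus your conclusion is correct and the only change needed is to replace the stated estimate $|c|^{-1/m}$ (and the attendant choice of a single maximal coefficient $\xi_\alpha$) by the correct $(\sum_\beta|c_\beta|)^{-1/d}$ bound, after which the argument closes. Compared with the paper, which simply cites the finite--type machinery, your write-up is more self-contained but otherwise identical in substance.
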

The proof of the lemma above is classical and relies on the fact that the smooth polynomial map $\R^k\ni t\mapsto \gamma(t)$ is of finite type (at most $d$) at each point; see \cite[\S XI.2.2]{stein} and \cite[\S VII.3.2]{stein}.

Combining the estimates of Lemma~\ref{lem:fourierdecay} and the properties of the operators $T_\gamma,\mathcal M_{\gamma}$ yields the following sparse domination result.

\begin{corollary} \label{c:radon1} Let $\gamma:\R^k\to \R^N$ be the map $\gamma(t)=(t^\alpha)_{1\leq |\alpha|\leq d}$ with $N$ denoting the dimension of the space spanned by the monomials of degree at most $d$, and let $\rho$ be given by \eqref{eq:rhogamma}. For every $f_1,f_2\in \mathcal S(\R^n)$ with compact support there exists sparse collections $\mathcal B_\star,\mathcal B$ such that
	\[
	|\l \mathcal M_\gamma f_1,f_2\r| \lesssim \sum_{B\in\mathcal B_\star}|B| \l f \r_{p_1,B}\l f_2\r_{p_2,B},\qquad 	|\l T_\gamma f_1,f_2\r| \lesssim \sum_{B\in\mathcal B}|B| \l f \r_{p_1,B}\l f_2\r_{p_2,B},
	\]
whenever $1\leq p_1,p_2\leq 2$ and $1\leq \frac{1}{p_1}+\frac{1}{p_2}\leq 1+\frac{1}{Nd}$. Corresponding statements hold for the truncated versions as above.
\end{corollary}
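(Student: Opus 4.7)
The plan is to deduce both sparse bounds directly from Theorem \ref{thm:sparseFT}, applied to $(\R^N,\rho,|\cdot|)$ with Fourier-decay exponent $\beta = 1/d$ and ambient dimension $n = N$. The identity $\frac{\beta+n}{n} = 1 + \frac{1}{Nd}$ matches the upper threshold appearing in the corollary's statement exactly, so what remains is essentially a verification of the three hypotheses of Theorem \ref{thm:sparseFT}: (a) the generating measures are supported, after a harmless normalisation, in the unit $\rho$-ball and have bounded total variation; (b) their Fourier transforms have uniform algebraic decay; and (c) for the singular integral they have zero mean.

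First I would make the generating measures explicit. From the discussion preceding the corollary, $T_\gamma f = \sum_s f * \mathrm{d}\mu_s$ with $\widehat{\mathrm{d}\mu_s}(\xi) = \widehat{m}(\delta_{2^s}\xi)$, and pointwise $\mathcal{M}_\gamma f \lesssim \sup_s |f| * \mathrm{d}\nu_s$ with $\widehat{\mathrm{d}\nu_s}(\xi) = \widehat{\nu}(\delta_{2^s}\xi)$. The comparison \eqref{eq:comparison} applied to $\gamma(t)$ for $|t|$ in the compact support of $\psi$ shows that both $m$ and $\nu$ are compactly supported in a fixed $\rho$-ball $B(0,R_0)$, with total variations respectively $\lesssim \|\Omega\|_{L^1(\mathbb{S}^{k-1})}$ and $\lesssim 1$. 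A single fixed rescaling $m \mapsto m \circ \delta_{R_0}$, which only shifts the dyadic scale index and alters implicit constants, reduces to supports in the unit $\rho$-ball; this yields (a). Lemma \ref{lem:fourierdecay} supplies (b) with $\beta = 1/d$ uniformly in $s$, since each $\mathrm{d}\mu_s$, $\mathrm{d}\nu_s$ is the $\delta_{2^s}$-pushforward of $m$, $\nu$. The cancellation (c) for $m$ is the identity $\widehat{m}(0) = 0$ already verified in the paragraph preceding the corollary via the mean-zero hypothesis on $\Omega$.

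With the hypotheses in place, part (i) of Theorem \ref{thm:sparseFT} applied to the dominating form $\sup_s |f_1| * \mathrm{d}\nu_s$ yields the sparse bound for $\mathcal{M}_\gamma$, while part (ii), using the cancellation of $m$, gives the bound for $T_\gamma$; the untruncated statements follow from the concluding sentence of that theorem. The index condition $1 + \frac{1}{Nd} > \frac{1}{p_1} + \frac{1}{p_2} \geq 1$ together with $1 \leq p_1, p_2 \leq 2$ is precisely what Theorem \ref{thm:sparseFT} demands, with the non-strict endpoint at $1 + \frac{1}{Nd}$ recovered by the usual open-range-to-closed-range argument. The real substance --- the finite-type Fourier decay of $\widehat{m}$ and $\widehat{\nu}$ along the polynomial image $\gamma(\R^k)$ --- has been isolated in Lemma \ref{lem:fourierdecay}, so no substantive obstacle is expected here; the only points requiring attention are the normalisation bookkeeping to fit the unit-ball support hypothesis and the harmless passage from $\mathcal{M}_\gamma f$ to its majorant $\sup_s |f|*\mathrm{d}\nu_s$, which is compatible with sparse forms because these depend on $f_1$ only through its $L^{p_1}$-averages.
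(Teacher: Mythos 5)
Your proposal matches the paper's approach exactly: the paper derives Corollary \ref{c:radon1} by combining the finite-type Fourier decay from Lemma \ref{lem:fourierdecay} (giving $\beta = 1/d$ in the ambient space $\R^N$) with Theorem \ref{thm:sparseFT}, and your verification of the support, total variation, decay, and cancellation hypotheses is precisely the bookkeeping the paper leaves implicit. One small caveat: there is no ``usual open-range-to-closed-range argument'' available for sparse bounds, so the non-strict inequality at the threshold $1 + \tfrac{1}{Nd}$ cannot actually be recovered this way; that endpoint appears to be a minor imprecision in the corollary's own statement, since both Theorem \ref{thm:sparseFT} and Lemma \ref{lem:Lpimp*mu} require the strict inequality $\tfrac{1}{p_1}+\tfrac{1}{p_2} < 1 + \tfrac{\beta}{n}$.
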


Variations of the sparse domination result are possible with weaker conditions on $\Omega$ for example but we do not pursue those here. Furthermore one can provide a sparse domination theorem whenever some $L^{p_1}\to L^{p_2 '}$ improving property is known. We give one such example below.

\begin{corollary} Let $\gamma:\R^2\to \R^5$ denote the polynomial map $\gamma(t)=(t_1,t_2,t_1 ^2,2t_1t_2,t_2 ^2)$ and define the singular Radon transform
	\[
		T_\sigma ^\tau f (x) \coloneqq\int_{2^\sigma\leq |t|< 2^\tau} f(x-\gamma(t)) \frac{\Omega(t)}{|t|^2}\, \d t,
	\]
with $\int_{\mathbb S^1}\Omega(t)\, \d \sigma_1(t)=0$ and $\Omega\in L^\infty(\mathbb S^1)$.
For every $f_1,f_2$ with compact support and every $\sigma,\tau\in\Z$ with $\sigma,\tau$ there exists a sparse collection $\mathcal B_{\sigma,\tau}$ consisting of $\rho$-balls  $B$ with $2^\sigma\leq r_B\leq 2^\tau$ such that
\[
\l T_\sigma ^\tau f_1,f_2\r \lesssim \sum_{B\in \mathcal B_{\sigma,\tau}} |B|\l f_1\r_{B,p_1}\l f_2 \r_{B,p_2}
\]
whenever $(\frac{1}{p_1},\frac{1}{p_2})$ is in the interior of the triangle with vertices $(0,1)$, $(1,0)$, and $(\frac58,\frac58)$. A similar estimate holds for the maximal operator
\[
\mathcal Mf(x)\coloneqq \sup_{r>0}\frac{1}{r^2}\int_{|t|\leq r} |f(x-\gamma(t))|\d t.
\]
 Furthermore no such sparse bound can hold outside the closed triangle with vertices as above.
\end{corollary}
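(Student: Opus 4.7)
The plan is to apply Theorem \ref{th:SD} (respectively Theorem \ref{th:max}) to the natural single-scale decomposition of $T_\sigma^\tau$ (respectively $\mathcal M$), after verifying the crucial single-scale $L^{p_1} \to L^{p_2'}$ improving property, and to obtain the sharpness statement from Lemma \ref{lem:thenewlemma}(ii).

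First, I would decompose $T_\sigma^\tau f = \sum_{\sigma \leq s < \tau} T(s) f$ with $T(s)f = f * \d\mu_s$, where $\d\mu_s$ is the $\delta_{2^s}$-dilate of the compactly supported Borel measure $\d m$ on the surface $\gamma(\R^2)$, exactly as in the setup preceding Corollary \ref{c:radon1}. The localization condition \eqref{eq:localization}, the uniform $L^p$-boundedness \eqref{eq:unifbdd} of the truncations, and the cancellation $\int \d m = 0$ all follow from that discussion: the cancellation uses that $\Omega$ has mean zero on $\mathbb S^1$, while the uniform $L^p$-bound is Lemma \ref{lem:Lpbound*mu}(ii). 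For the maximal variant one works with the corresponding smooth measures $\d\nu_s$ and appeals to Theorem \ref{th:max}.

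The crux is to verify the single-scale $L^{p_1} \to L^{p_2'}$ improving property for $T(0)$ on the open triangle $\mathcal T^{\circ}$ with vertices $(0,1)$, $(1,0)$, $(5/8,5/8)$. Once this is in hand, Lemma \ref{lem:thenewlemma}(i) upgrades it to the $(p_1,p_2')$-improving property of Definition \ref{d.LPimprove} with a power modulus of continuity, after which Theorem \ref{th:SD} (respectively Theorem \ref{th:max}) immediately yields the sparse bound. The bound $T(0): L^{p_1} \to L^{p_2'}$ on $\mathcal T^{\circ}$ is a classical $L^p$-improving estimate: $T(0)$ is convolution with a smooth, compactly supported measure on the $2$-surface $\gamma(\R^2) \cap \{|t| \sim 1\}$, whose second fundamental form has full rank since the quadratic forms $t_1^2$, $2t_1 t_2$, $t_2^2$ span all quadratic forms on $\R^2$. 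The sharp endpoint $L^{8/5} \to L^{8/3}$ at the vertex $(5/8,5/8)$ is the standard sharp $L^p$-improving estimate for this model non-degenerate quadratic $2$-surface in $\R^5$; interpolation with the trivial bounds at $(0,1)$ and $(1,0)$ fills out the whole open triangle.

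The sharpness claim is obtained by contradiction: if a sparse bound of the asserted form were to hold for $(1/p_1, 1/p_2)$ in an open neighborhood of a point outside the closed triangle, Lemma \ref{lem:thenewlemma}(ii) would force $T(0): L^{p_1} \to L^{p_2'}$, contradicting the well-known Knapp-type obstruction obtained by testing on functions supported in dual slabs around a point of $\gamma(\R^2)$. The main obstacle in the forward direction is the sharp endpoint $L^{8/5} \to L^{8/3}$ for $T(0)$: the Fourier-decay argument of Lemma \ref{lem:Lpimp*mu}, which uses only $|\widehat{\d m}(\xi)| \lesssim |\xi|^{-1/2}$ coming from Lemma \ref{lem:fourierdecay}, merely recovers the smaller triangle already covered by Corollary \ref{c:radon1}, so obtaining the sharper vertex really requires invoking a specialized sharp bound tailored to the quadratic geometry of $\gamma$.
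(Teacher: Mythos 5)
Your proposal is correct and follows the same route as the paper's own proof: decompose into single-scale pieces, invoke the sharp $L^{8/5}\to L^{8/3}$ improving bound for the quadratic $2$-surface in $\R^5$ (the paper cites Gressman for this) together with the Fourier decay $|\widehat{\d m}(\xi)|\lesssim|\xi|^{-1/2}$ from Lemma~\ref{lem:fourierdecay}, feed these into Lemma~\ref{lem:thenewlemma}(i) and Theorems~\ref{th:SD}/\ref{th:max}, and run the converse direction via Lemma~\ref{lem:thenewlemma}(ii) (or Proposition~\ref{prop:partial}, which applies at a single pair of exponents and is the cleaner tool here) against an explicit example. You correctly identify the key obstacle — that the Fourier-decay argument of Lemma~\ref{lem:Lpimp*mu} is too weak to reach the vertex $(5/8,5/8)$ so a geometry-specific bound must be invoked — which is precisely the distinction between this corollary and the earlier Corollary~\ref{c:radon1}; the only small deviation is that the paper proves sharpness by testing $T(0)$ on $f_\delta=\ind_{B(\gamma(3/4),\delta)}$ and computing directly, rather than quoting a Knapp-type obstruction.
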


\begin{proof} Let $T(s)$ denote the single scale operator $T_{s} ^{s+1}$. Observe that
	\[
	|T(s)f|\lesssim \|\Omega\|_{L^\infty(\mathbb S^1)} \frac{1}{2^{2s}}\int_{2^s \leq |t|<2^{s+1}} |f(x-\gamma(t))| \, \d t
	\]	
By \cite{gress} we know that $T(0)$ maps $L^{p_1}\to L^{p_2 {'}}$ whenever $(\frac{1}{p_1},\frac{1}{p_2})$ is in the open triangle of the statement. It is also well known that the measure $\d m$
\[
\int_{\R^5}\phi(x)\d m = \int_{\frac12\leq |t|<1} \phi(\gamma (t)) \frac{\d t}{t}
\]
satisfies $|\widehat m(\xi)|\lesssim |\xi|^{-\frac{1}{2}}$ as in Lemma~\ref{lem:fourierdecay}. Since $T(s)f=f*\d\mu_s$ with $\widehat{\d\mu_s}(\xi)\coloneqq \widehat m(\delta_{2^s}\xi)$, Lemma~\ref{lem:thenewlemma} shows that $T\sim\sum_s T(s)$ is $(p_1,p_2 {'})$ improving in the sense of Definition~\ref{d.LPimprove}. Furthermore these operators are singular Radon transforms along polynomial varieties of finite type so they are known to be bounded on $L^p(\R^5)$ for $1<p<\infty$; see for example \cite[\S XI]{stein}. The sparse domination follows by an application of Theorem~\ref{th:SD}.

We prove the sharpness of the sparse region for $T_\sigma ^\tau$ by recalling a well known example. Let $f_\delta\coloneqq \ind_{B(c_o,\delta)}$ with $\delta$ small and $c_o\coloneqq \gamma(\frac34)$. Consider also $\Omega$ bounded and with mean zero on $\mathbb S^1$ and such that $\Omega \equiv 1$ on the positive quadrant of $\R^2$. We can then easily calculate that $|[T(s)f_\delta](x)|= [T(s)f_\delta](x) \gtrsim \delta^2$ on the set of $x$ in the positive quadrant of $\R^5$ such that $|x-\gamma(t)|\leq \delta/2$ for some $t\in(1/2,1)$. The set of such $x$ has measure $\gtrsim \delta^{3}$ and so $T(0):L^{p_1}\to L^{p_2 '}$ implies that
\[
\delta^2 \delta^{\frac{3}{p_2'}}\lesssim \|T(0)f_\delta \|_{ L^{p_2 '}}\lesssim \|f_\delta\|_{L^{p_1}} \delta^\frac{5}{p_1}
\]
which together with the symmetric estimate which follows by self-duality yields the restrictions
\[
2+\frac{3}{p_2 '} \geq \frac{5}{p_1},\qquad 2+\frac{3}{p' _1}\geq \frac{5}{p_2}.
\]
The restrictions above describe the closure of the triangle in the statement so for $(p_1 ^{-1},p_2 ^{-1})$ outside the closed triangle the sparse domination result of the corollary has to fail. The example proving the sharpness of the sparse form for $\mathcal M$ is similar but simpler.
\end{proof}

\section{Whitney covers in geometrically doubling metric spaces and sparse collections}\label{sec2} In this section we describe the covering argument that will be employed in the proof of the main theorem as a way to obtain appropriate stopping balls. The covering argument is a \emph{Whitney-type decomposition} in suitable metric spaces. The construction of such Whitney coverings and allied issues occupies the first half of this section. In the second half we will use the Whitney decomposition in order to construct \emph{sparse collections} that will be central in the proofs of the main results of this paper.

\subsection{Whitney covers in metric spaces}\label{sec:whit}  We recall the notion of a \emph{geometrically doubling metric space}. 

\begin{definition}\label{def:geomdoub} We will say that the quasi-metric space $({\mathbb{X}},\d)$ is \emph{geometrically doubling} if there exists some positive integer $N$ such that every ball of radius $r$ may be covered by at most $N$ balls of radius $r/2$.
\end{definition}

Before proceeding to the description of the Whitney covering lemma, some remarks are in order. Firstly we note that if $({\mathbb{X}},\d,|\cdot|)$ is a doubling quasi-metric measure space, then ${\mathbb{X}}$ is automatically geometrically doubling. Secondly, we note that the definition of the geometric doubling property does not really depend on the quasi-metric. Indeed, if $({\mathbb{X}},\d)$ has the geometric doubling property then so does $({\mathbb{X}},\d ')$ for any quasi-metric $\d '$ which is equivalent to $\d$. In that case the number $N$ appearing in the definition of geometric doubling will depend on the choice of quasi-metric; see \cite[\S 2.1]{AlvMi} for an extensive discussion on the geometry of quasi-metric spaces. We shall not pursue these subtle issues in the current paper as for us the consideration of a single quasi-metric in ${\mathbb{X}}$ will be sufficient, and the precise value of the relevant constant is unimportant.

We state below the Whitney-type covering result that will be used throughout the paper. In the formulation below this Whitney decomposition is contained in \cite[Theorem 2.4]{AlvMi}.

\begin{lemma}[Whitney-type decomposition, \cite{AlvMi}]\label{lem:whitney} Let $({\mathbb{X}},\d)$ be a geometrically doubling quasi-metric space. Then for every $\eta\in(1,\infty)$ there exist $\Lambda\in(\eta,\infty)$ and $M\in\N$, both depending on $\d$, $\eta$, and the geometric doubling constant of $({\mathbb{X}},\d)$, and which have the following significance.
	
	For each proper, nonempty, open subset $\Omega\subset {\mathbb{X}}$ there exists a sequence of points $\{c_j\}_{j\in\N}$ in $\Omega$ and a sequence of positive radii $\{r_j\}_{j\in\N}$, such that the following hold:
	\begin{enumerate}
		\item [\emph{(i)}] $ \Omega=\cup_{j\in\N} B(c_j,r_j)$.
		\item[\emph{(ii)}] We have that $\sum_{j\in\N} \ind_{B(c_j,\eta r_j)}\leq M.$
		\item[\emph{(iii)}] For each $j\in \N$ we have that $B(c_j,\eta r_j)\subset \Omega$ and $B(c_j,\Lambda  r_j)\cap ({\mathbb{X}}\setminus \Omega)\neq\varnothing$.
		\item[\emph{(iv)}] If $B(c_j,\eta r_j)\cap B(c_i,\eta r_i)\neq \varnothing$ for $i,j\in\N$ then $r_i\eqsim r_j$, with implicit constants independent of $i,j\in\N$.		
		\item[\emph{(v)}] The collection of balls $\{B(c_j,\frac15 r_j)\}_{j\in\mathbb N}$ is pairwise disjoint.
		\item[\emph{(vi)}] For every $j\in\N$ we have that $r_j = 2^{s_j}$ for some $s_j\in\R$.
	\end{enumerate}
\end{lemma}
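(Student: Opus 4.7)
\bigskip
\noindent\textbf{Proof proposal.} The plan is a two-stage construction: first assign to each point in $\Omega$ a dyadic Whitney radius proportional to the distance to $\mathbb{X}\setminus \Omega$, then extract a locally finite, sparse subfamily via a Vitali-type selection. Concretely, for $x\in\Omega$ set $d(x)\coloneqq \inf_{y\notin\Omega}\d(x,y)\in (0,\infty)$ and let $s(x)\in\Z$ be the largest integer with $A\cdot 2^{s(x)}\leq d(x)$, where $A=A(\eta,c_{\d})\geq \eta$ will be fixed later. Define $r(x)\coloneqq 2^{s(x)}$, which immediately gives property (vi). Using the quasi-triangle inequality one checks $B(x,\eta r(x))\subset \Omega$ (since $\eta r(x)\leq d(x)$) and $B(x,2Ac_{\d} r(x))\cap (\mathbb{X}\setminus\Omega)\neq\varnothing$ (since $d(x)<2A r(x)$), so choosing $\Lambda\coloneqq 2Ac_{\d}$ sets up property (iii) for \emph{every} $x\in\Omega$.

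Next, consider the family $\mathcal{F}=\{B(x,r(x)/5)\}_{x\in\Omega}$ and extract a maximal pairwise disjoint subcollection $\mathcal{F}'=\{B(c_j,r_j/5)\}_{j\in\N}$, yielding (v). Because radii are $2^{s(x)}$ with $s(x)$ integer and bounded above on each compact piece of $\Omega$, one can carry out the selection greedily scale by scale, in decreasing order of $s$, extracting at each scale a maximal disjoint subfamily among balls not intersecting any previously chosen one. Geometric doubling ensures each scale produces a countable family, so the total family is countable. For any $x\in\Omega$, maximality forces $B(x,r(x)/5)\cap B(c_j,r_j/5)\neq\varnothing$ for some $j$ with $r_j\geq r(x)$ (this scale constraint follows from the top-down selection). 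The quasi-triangle inequality then places $x$ in $B(c_j,c_{\d}(r(x)+r_j)/5)\subset B(c_j,C r_j)$ for a structural constant $C=C(c_{\d})$; by a mild enlargement (absorbing $C$ into the $\Lambda$ and $\eta$ parameters) we obtain (i).

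The remaining properties follow from a \emph{quasi-Lipschitz} estimate for $d$: for any $y,z\in\Omega$, the quasi-triangle inequality applied to $\d(\cdot,w)$ with $w\notin\Omega$ and then taking infimum yields $d(y)\leq c_{\d}\d(y,z)+c_{\d}d(z)$ and symmetrically. If $B(c_j,\eta r_j)\cap B(c_i,\eta r_i)\neq\varnothing$, combining this estimate with $d(c_i)\asymp A r_i$ and $d(c_j)\asymp A r_j$ and with the upper bound $\d(c_i,c_j)\leq c_{\d}\eta(r_i+r_j)$ forces $r_i\asymp r_j$ provided $A$ is chosen sufficiently large relative to $\eta$ and $c_{\d}$; this is property (iv). Property (ii) then combines (iv) with geometric doubling: only boundedly many balls of comparable radius can pass through a given point, the bound $M$ depending only on the doubling constant, $\eta$, and $c_{\d}$.

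The main technical obstacle is the careful choice of the constants $A$ and $\Lambda$: they must simultaneously be large enough that the quasi-Lipschitz propagation of $d$ forces overlapping Whitney balls to have comparable radii, while the greedy Vitali selection still produces a covering with $\eta$-dilates contained in $\Omega$. All quantitative thresholds ultimately depend only on $\eta$, $c_{\d}$, and the geometric doubling constant $N$, so the conclusion is uniform across proper open subsets $\Omega\subset \mathbb{X}$, as the statement requires.
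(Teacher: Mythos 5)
Your proposal takes a genuinely different route from the paper's. The paper cites \cite{AlvMi}*{Theorem~2.4} verbatim for the core Whitney properties (i)--(iv), then post-processes: an application of the $5R$-covering lemma from \cite{Hein} to a Whitney cover with $\eta>5$ yields (v), and replacing the radii by the nearest dyadic value inside a $2\eta$-cover yields (vi). You instead build the decomposition from scratch: assign a dyadic radius $r(x)=2^{s(x)}$ comparable to $d(x)=\dist(x,\mathbb X\setminus\Omega)$, extract a maximal disjoint subfamily of $\{B(x,r(x)/5)\}_{x\in\Omega}$, and verify all six properties directly, with the quasi-Lipschitz estimate for $d$ doing visible work in (iv) and geometric doubling plus (v) doing the work in (ii). Your approach is more self-contained and buys a cleaner view of where each structural constant enters; the trade is that it shifts the burden of proving (i)--(iv) onto you, so the constants must be tracked carefully rather than absorbed into a citation.

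Two soft spots remain. First, the ``mild enlargement'' you invoke for (i): your argument gives $x\in B(c_j,Cr_j)$ with $C=2c_\d/5$, which is $\leq 1$ only if $c_\d\leq 5/2$; for a general quasi-metric, replacing $r_j$ by $Cr_j$ to salvage (i) degrades the disjointness factor in (v) from $\frac15$ to $\frac1{5C}$. The fix is to shrink the selection radius at the outset to $r(x)/(5C)$ while keeping the reported radius $r_j=r(c_j)$, so (v) and (vi) are preserved exactly; alternatively, note that $\frac15$ in (v) should be read as a structural constant depending on $c_\d$, which is the same convention implicitly used when the paper quotes Heinonen's (metric-space) $5R$-lemma in a quasi-metric setting. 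Second, your ``greedy top-down'' Vitali extraction presupposes a largest scale, but $\sup_{x\in\Omega}d(x)$ may be infinite (take $\Omega=\R^n\setminus\{0\}$), so there is no top scale to start from. Replace it by the standard maximal-chain (Zorn) selection of a pairwise disjoint subfamily; the key fact you need --- that every $B(x,r(x)/5)$ meets some chosen $B(c_j,r_j/5)$ with $r_j\geq r(x)$ --- still follows from maximality since otherwise $B(x,r(x)/5)$ could be added to the family, and the dyadic quantization of the radii keeps the scale comparison clean. With these two repairs your construction is sound and recovers the lemma without appeal to \cite{AlvMi}.
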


\begin{proof} The Whitney decomposition with properties (i)--(iv) is \cite[Theorem 2.4]{AlvMi}, verbatim; notice that (iv) above follows easily from (iii).  Property (v) follows from a well known reduction that we sketch here. Assume that we are given an $\eta$-Whitney cover $\{B(c_j,r_j)\}_{j\in \N}$ with $\eta>5$. Then by the $5R$-covering lemma, see for example \cite[Theorem 1.2]{Hein}, there exists a pairwise disjoint subcollection $\{B{c_{j_k},r_{j_k}}\}_{k\in\N}$ such that
	\[
	\Omega=\bigcup_{j\in\N} B(c_j,r_j) \subset \bigcup_{k\in\N} 5B(c_{j_k},r_{j_k}) \subset \Omega
	\]
by property (iii) and the assumption $\eta>5$. Now we can replace the original collection $\{B(c_j,r_j)\}_{j\in\N}$ with the collection $\{B(c_{j_k},5r_{j_k})\}_{k\in\N}$ and note that it satisfies properties (i)--(v). To see (vi) we just consider a $2\eta$-Whitney decomposition of some open set $\Omega$ with $\eta>5$ and replace $r_j$ by $2^{s_j}$ where $r_j < 2^{s_j}<2r_j$.
\end{proof}
For future reference we record here an easy estimate for balls in the Whitney cover of an open set $\Omega$.

\begin{proposition}\label{prop:dist} There exist constants $b,D_1,D_2,D_3>1$ depending only on the quasi-metric constant $c_\d$ such that the following holds. For every $q\geq 1$, for every $\eta> 4(c_\d)^2 q$, and for every ball $L$ in the $\eta$-Whitney cover of some open set $\Omega$ we have for $x\in qL$ that
\[
\frac{\eta}{b}\leq \frac{\dist(x,\partial \Omega)}{r_L} \leq b\Lambda,
\]
and
\[
B\left(x,\frac{\dist(x,\partial \Omega)}{\Lambda}\right)\subset  D_1 L \subset D_2   B\left(x,\frac{\dist(x,\partial \Omega)}{\eta}\right)\subset  D_3 \frac{\Lambda}{\eta} L.
\]
In the displays above we have $\Lambda=\Lambda(\eta)$ as provided by the Whitney decomposition of Lemma~\ref{lem:whitney}.
\end{proposition}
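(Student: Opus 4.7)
The plan is to derive both conclusions by direct quasi-triangle computations, using the hypothesis $x\in qL$ together with the two Whitney containment properties from item (iii) of Lemma~\ref{lem:whitney}: $B(c_L,\eta r_L)\subset\Omega$ and $B(c_L,\Lambda r_L)\cap(\mathbb X\setminus\Omega)\neq\varnothing$. No measure-theoretic or covering input beyond (iii) is needed, so the proposition is purely geometric.

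First I would establish the two-sided estimate for $\dist(x,\partial\Omega)/r_L$. For the lower bound, if $\d(y,x)<\rho$ then $\d(y,c_L)\le c_\d(\rho+qr_L)$ by the quasi-triangle inequality, so $B(x,\rho)\subset B(c_L,\eta r_L)\subset\Omega$ provided $\rho\le(\eta/c_\d-q)r_L$. The hypothesis $\eta>4c_\d^2 q$ gives $\eta/c_\d-q\ge 3\eta/(4c_\d^2)$, hence $\dist(x,\partial\Omega)\ge(\eta/b)r_L$ for a $b$ that is a fixed multiple of $c_\d^2$. For the upper bound, pick $z\in B(c_L,\Lambda r_L)\setminus\Omega$ from (iii); then $\dist(x,\partial\Omega)\le\d(x,z)\le c_\d(q+\Lambda)r_L\le 2c_\d\Lambda r_L$, where at the last step we use $q<\eta\le\Lambda$. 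Taking $b$ as the larger of the two constants yields the first assertion.

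The three ball inclusions follow by routine applications of the quasi-triangle inequality combined with the two distance bounds just proved. For $B(x,\dist(x,\partial\Omega)/\Lambda)\subset D_1 L$, note $\dist(x,\partial\Omega)/\Lambda\le br_L$, so any $y$ in the left ball satisfies $\d(y,c_L)\le c_\d(b+q)r_L$, and $D_1\ge c_\d(b+q)$ works. For $D_1 L\subset D_2 B(x,\dist(x,\partial\Omega)/\eta)$, any $y\in D_1 L$ obeys $\d(y,x)\le c_\d(D_1+q)r_L$, while $\dist(x,\partial\Omega)/\eta\ge r_L/b$; hence $D_2$ can be taken as a multiple of $c_\d b(D_1+q)$. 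For the last inclusion, $D_2\dist(x,\partial\Omega)/\eta\le D_2 b(\Lambda/\eta)r_L$, so any $y$ in the middle ball satisfies $\d(y,c_L)\le c_\d(D_2 b\Lambda/\eta+q)r_L\le D_3(\Lambda/\eta)r_L$ provided $D_3$ is a suitable multiple of $c_\d(D_2 b+q)$, using $\Lambda/\eta>1$ to absorb the $qr_L$ term.

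The only genuine difficulty is bookkeeping of the constants, making sure that $b,D_1,D_2,D_3$ can be chosen to depend only on $c_\d$ (with the parameter $q$ implicitly controlled through the range $\eta>4c_\d^2 q$), rather than on $\eta$ or $\Lambda$. Once the chain of inequalities above is laid out, the constants propagate mechanically and the proof concludes.
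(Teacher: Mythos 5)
The paper leaves Proposition~\ref{prop:dist} without proof, so there is no written argument to compare against; your computation is exactly the elementary quasi-triangle chase the authors evidently considered routine, and the chain of inclusions you set up (passing from $B(x,D/\Lambda)$ through $D_1L$ to the $\eta$-ball and back out to $D_3(\Lambda/\eta)L$, always via the center $c_L$ and the bound $\d(x,c_L)<qr_L$) is the intended one.

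One point in your final paragraph does not hold up, however, and is worth flagging because it exposes a small imprecision in the proposition itself: your own formulas force $D_1\geq c_\d(b+q)$, and in turn $D_2, D_3$ grow linearly in $q$ as well, so the constants \emph{cannot} be chosen to depend only on $c_\d$. This is not an artifact of your method but is genuinely unavoidable, since $x\in qL$ may sit at $\rho$-distance of order $qr_L$ from $c_L$ while the radius $\dist(x,\partial\Omega)/\Lambda$ stays comparable to $r_L$, so the hull of $B(x,\dist(x,\partial\Omega)/\Lambda)$ centered at $c_L$ necessarily has radius $\gtrsim q\,r_L$. The parenthetical remark you make (that $\eta>4c_\d^2q$ ``controls'' $q$) does not rescue the claim, because $\eta$ is a free large parameter and $q$ grows with it. The correct and harmless reading, consistent with how the authors invoke the proposition inside Lemma~\ref{lem:tedious} (``the constants $b,D_1,D_2,D_3$ from Proposition~\ref{prop:dist}, with input the (sufficiently large) value of $q$ of the statement''), is that the constants depend on $c_\d$ \emph{and} $q$, but not on $\eta$, $\Lambda$, $\Omega$, or $L$. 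What is essential, and what your argument does establish, is precisely this independence of $\eta$ and $\Lambda$: the lower bound $\dist(x,\partial\Omega)\geq(\eta/b)r_L$ uses the hypothesis $\eta>4c_\d^2q$ to absorb $q$, the upper bound uses $q<\eta<\Lambda$, and the last inclusion uses $\Lambda/\eta>1$ to absorb $c_\d q\,r_L$. With that single correction to the stated dependence of the constants, your proof is complete and correct.
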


Whenever we apply the Whitney decomposition above for some value of $\eta\in(1,\infty)$ in order to produce a covering $\{B_j\}_j$ of some open set $\Omega$ we will say that $\{B_j\}_j$ is an $\eta$-\emph{Whitney covering} of $\Omega$, and we will use properties (i)-(iv) above with no particular mention. 

%

We note below the well known \emph{finite number of neighbors} property of the Whitney covering in the case that the metric space supports a doubling measure.
\begin{lemma}\label{l.finneigb} Let $({\mathbb{X}},\d,|\cdot|)$ be a space of homogeneous type, $\eta\in(1,\infty)$, and let $\{B\}_{B\in\mathcal B_{\Omega}}$ be an $\eta$-Whitney cover of an open set $\Omega$. Then for any $B\in\mathcal B_{\Omega}$ we have
	\[
	\sharp\{B' \in\mathcal B_{\Omega}:\, \eta B \cap \eta B' \neq \varnothing \} \lesssim_{\mathbb{X},\eta} M,
	\]
with the implicit constant depending on the doubling constant of $|\cdot|$ and the chosen $\eta$ of the Whitney cover.
\end{lemma}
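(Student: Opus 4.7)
Fix $B=B(c,r)\in\mathcal B_{\Omega}$ and write $\mathcal N(B)\coloneqq\{B'=B(c',r')\in\mathcal B_{\Omega}:\eta B\cap \eta B'\neq\varnothing\}$. The plan is a purely geometric volume-packing argument: I will show that the $\frac{1}{5}$-shrinks of the balls in $\mathcal N(B)$ are pairwise disjoint, have measures uniformly comparable to $|B|$, and lie in a fixed dilate of $B$. Doubling then caps their number by a structural constant.

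First, for any $B'\in\mathcal N(B)$, property (iv) of Lemma~\ref{lem:whitney} yields $r'\eqsim r$ with a ratio depending only on $c_\d$ and $\eta$; fix $C_0\geq 1$ with $C_0^{-1} r\leq r'\leq C_0 r$. Pick $z\in\eta B\cap\eta B'$. The quasi-triangle inequality gives
\[
\d(c,c')\leq c_\d(\d(c,z)+\d(z,c'))\leq c_\d\eta(r+r')\leq c_\d\eta(1+C_0)\, r,
\]
so for every $y\in B(c',r'/5)$ we have $\d(c,y)\leq c_\d(\d(c,c')+r'/5)\leq C_1 r$ for a constant $C_1=C_1(c_\d,\eta)$. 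Thus $B(c',r'/5)\subset C_1 B$ for every $B'\in\mathcal N(B)$.

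Next, property (v) of Lemma~\ref{lem:whitney} says the balls $\{B(c',r'/5)\}_{B'\in\mathcal B_{\Omega}}$ are pairwise disjoint; a fortiori this holds over $B'\in\mathcal N(B)$. By the $(2,\beta)$-doubling assumption on $|\cdot|$, $r'\geq r/C_0$ implies $|B(c',r'/5)|\gtrsim |B|$ with an implicit constant depending only on $\beta$ and $C_0$. Summing disjoint measures inside $C_1 B$,
\[
\#\mathcal N(B)\cdot|B|\;\lesssim\;\sum_{B'\in\mathcal N(B)}\!\!|B(c',r'/5)|\;\leq\;|C_1 B|\;\lesssim\;|B|,
\]
where the last inequality is again doubling. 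Dividing by $|B|$ yields $\#\mathcal N(B)\lesssim_{\mathbb X,\eta}1\leq M$, which is the claimed bound (recall $M\geq 1$ is the Whitney overlap constant from Lemma~\ref{lem:whitney}).

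There is no real obstacle beyond bookkeeping of constants: the two ingredients one needs, namely the comparability of radii among overlapping Whitney balls and the disjointness of their $\frac{1}{5}$-shrinks, are already supplied by Lemma~\ref{lem:whitney}, and the rest is doubling applied to a nested pair $B(c,r/C_0')\subset B(c',r'/5)\subset C_1 B$. The only care required is tracking that the comparability constants from property (iv) and the $\frac{1}{5}$-shrinking from property (v) are preserved through the quasi-triangle inequality with the constant $c_\d$, which the estimates above handle explicitly.
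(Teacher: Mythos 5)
Your argument is correct, but it travels a different road than the paper does. The paper's proof of Lemma~\ref{l.finneigb} integrates the indicator sum $\sum_{B'\in J_B}\ind_{\eta B'}$ and invokes property (ii) of Lemma~\ref{lem:whitney} -- the bounded overlap constant $M$ -- to compare $\sum_{B'\in J_B}|\eta B'|$ with $M\,|\bigcup_{B'\in J_B}\eta B'|$; the constant $M$ thus enters the estimate directly. You instead use property (v) -- the pairwise disjointness of the $\frac15$-shrinks -- together with property (iv) and doubling, to run a pure volume-packing argument that yields $\#\mathcal N(B)\lesssim_{\mathbb X,\eta}1$ without touching $M$ at all. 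Both approaches are short and valid; yours has the advantage of producing a numerical bound that visibly depends only on the doubling constant, the quasi-triangle constant, and $\eta$ (with $M$ appearing in the conclusion only because $M\geq 1$), whereas the paper's argument is marginally cleaner in that it does not need to verify $|B(c',r'/5)|\gtrsim|B|$ for non-concentric balls.

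One small imprecision in your closing remark: the nesting chain ``$B(c,r/C_0')\subset B(c',r'/5)$'' need not hold, since $c$ and $c'$ are different centers. What you actually use -- correctly -- is that $B(c,r)\subset B(c',Cr')$ for a structural $C$, followed by doubling to compare $|B(c',Cr')|$ with $|B(c',r'/5)|$; the body of the argument already handles this, so the final sentence should simply be reworded to match.
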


\begin{proof} Let $J_B\coloneqq \{B':\,\eta B' \cap\eta B \neq \varnothing \}$. Then
	\[
	\int \sum_{B' \in J_B}\ind_{\eta B'}=\sum_{B'\in J_B}|\eta B'|\eqsim_{\mathbb X} \sharp J_B |\eta B|
	\]
using (iv) of the Whitney decomposition together with the fact that $\eta B \cap \eta \B'\neq \varnothing $ for all $B' \in J_B$ and that $|\cdot|$ is doubling. On the other hand we have that $\cup_{B'\in J_B}\eta B' \subset c\eta B$ for some constant $c$ depending on ${\mathbb{X}}$, as $r_B\eqsim r_{B'}$, uniformly in $B'\in J_B$. We conclude that
\[
\sharp J_B |\eta B| \eqsim\int \sum_{B' \in J_B}\ind_{\eta B'} \leq M \Big|\bigcup_{B' \in J_B}\eta B' \Big|\leq M |c\eta B|\lesssim_{\mathbb X} M |\eta B|
\]
and the lemma follows.
\end{proof}

In what follows we will need to split the support of our functions into essentially disjoint balls of fixed scale. This is done in the following lemma which follows by more or less standard arguments in spaces of homogeneous type. In fact it is essentially contained in \cite[Theorem 1.16]{Hein}.

\begin{lemma}\label{l.cover} Let $({\mathbb{X}},\d,|\cdot|)$ be a space of homogeneous type and let $B$ be a ball. For each $s\in\Z$ with $2^s\leq r_B$ there exists a finite collection of balls $\{L_\tau\}_\tau$ whose union covers $B$, with $r_{L_\tau}=2^s$, and such that $L_\tau\subset c_1B$ for each $\tau$, and for every $\rho>0$ we have $\sum_\tau \ind_{\rho L_\tau}\lesssim_{\rho,{\mathbb{X}}} 1$. The constant $c_1>0$ and the implicit constant depend on the homogeneous metric structure of ${\mathbb{X}}$ and on $\rho$.
\end{lemma}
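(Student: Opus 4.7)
The plan is to build the collection $\{L_\tau\}_\tau$ by a maximal $\eps$-separation argument in the spirit of the Vitali/$5R$-covering lemma, exploiting both the doubling measure (for finiteness) and geometric doubling (for bounded overlap). Concretely, fix a small constant $\eps_0\in(0,1/(2c_\d))$ depending only on the quasi-metric constant $c_\d$, and choose a maximal collection of points $\{x_\tau\}_\tau\subset B$ such that the balls $\{B(x_\tau,\eps_0 2^s)\}_\tau$ are pairwise disjoint. Such a maximal family exists by Zorn's lemma. Then I set $L_\tau:=B(x_\tau,2^s)$.

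The covering property $B\subset \bigcup_\tau L_\tau$ follows from maximality: if $y\in B$ were not in any $L_\tau$, then by maximality $B(y,\eps_0 2^s)$ must meet some $B(x_\tau,\eps_0 2^s)$, which via the quasi-triangle inequality yields $\d(y,x_\tau)\leq 2c_\d\eps_0 2^s<2^s$, placing $y\in L_\tau$. For the containment $L_\tau\subset c_1 B$ with $c_1=2c_\d$, use $x_\tau\in B$, $2^s\leq r_B$, and the quasi-triangle inequality. Finiteness is obtained from the disjoint balls $B(x_\tau,\eps_0 2^s)\subset c_1 B$ together with the fact that each has measure bounded below by a positive constant depending on $\mathbb X$, $r_B$, and $2^s$ (by iterating the doubling condition from $B(x_\tau,\eps_0 2^s)$ up to a ball of radius $\sim r_B$ containing $B$), so that at most $|c_1B|/\min_\tau|B(x_\tau,\eps_0 2^s)|<\infty$ indices $\tau$ are possible.

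The crux is the bounded overlap estimate. Fix $x\in\mathbb X$ and $\rho>0$; by symmetry of $\d$,
\[
\sum_\tau \ind_{\rho L_\tau}(x)=\#\{\tau\st x_\tau\in B(x,\rho 2^s)\}.
\]
Disjointness of $B(x_\tau,\eps_0 2^s)$ forces $\d(x_\tau,x_{\tau'})\geq\eps_0 2^s$ for $\tau\neq\tau'$. On the other hand, iterating the geometric doubling property (Definition~\ref{def:geomdoub}) a number of times depending only on $\rho$, $\eps_0$ and $c_\d$, one covers $B(x,\rho 2^s)$ by $N=N(\rho,\mathbb X)$ balls of radius $\eps_0 2^s/(3c_\d)$. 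Any two $x_\tau,x_{\tau'}$ lying in the same such small ball would satisfy $\d(x_\tau,x_{\tau'})\leq 2c_\d\cdot\eps_0 2^s/(3c_\d)<\eps_0 2^s$, contradicting the separation. Hence each small ball contains at most one $x_\tau$ and the above cardinality is $\leq N(\rho,\mathbb X)$, which is exactly the claimed bound $\sum_\tau \ind_{\rho L_\tau}\lesssim_{\rho,\mathbb X}1$.

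The main (minor) obstacle is book-keeping the quasi-metric constant $c_\d$: the $\eps_0$ must be chosen small enough relative to $c_\d$ so that the maximality argument delivers the covering with radii $2^s$, and the rescaling factor in the geometric doubling step has to be tuned so that two separated centers cannot share a covering ball. No deep ideas are required beyond this careful scaling; the rest is standard homogeneous-space technology, essentially as in \cite[Theorem 1.16]{Hein}.
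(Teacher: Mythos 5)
Your proof is correct, and it follows a route genuinely different from the paper's in one important step. The paper builds the balls $L_\tau$ by applying the $5R$-covering lemma to the family $\{B(x,\tfrac15 2^s):x\in B\}$ and then establishes the bounded overlap $\sum_\tau \ind_{\rho L_\tau}\lesssim 1$ by a \emph{measure-theoretic} argument: for $x$ in many $\rho L_\tau$'s, the disjoint cores $B_\tau$ all have measure comparable to $|B(x,\tfrac15 2^s)|$ and are all contained in a fixed dilate of that ball, so the count of indices is bounded by a quotient of measures. You instead build a maximal $\eps_0 2^s$-separated net in $B$ (which, under the hood, is essentially what the proof of the $5R$-covering lemma does anyway), but then derive the overlap bound \emph{purely metrically}: you iterate the geometric doubling property to cover $B(x,\rho 2^s)$ by $N(\rho,\mathbb X)$ balls so small that each can hold at most one net point, and the separation finishes the count. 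Your version is arguably tidier, since the overlap bound becomes a statement about the quasi-metric alone and never invokes the measure $|\cdot|$ except for the (inessential here) finiteness of the family; this also makes visible that geometric doubling, rather than the doubling measure, is the relevant hypothesis for the overlap claim. All the scaling bookkeeping in your argument checks out: $\eps_0<1/(2c_\d)$ gives the covering from maximality, $c_1=2c_\d$ works because $2^s\leq r_B$ and $x_\tau\in B$, the disjointness of the $B(x_\tau,\eps_0 2^s)$ does imply $\d(x_\tau,x_{\tau'})\geq\eps_0 2^s$ once $\d$ is symmetrized as the paper assumes, and the number of geometric-doubling iterations needed to reach radius $\eps_0 2^s/(3c_\d)$ from $\rho 2^s$ depends only on $\rho$ and $c_\d$, hence only on $\rho$ and $\mathbb X$.
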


\begin{proof} Let $\mathcal B$ be the collection of balls $\{B(x,\frac{1}{5}2^s):\, x\in B\}$. Obviously this collection covers $B$ and $\sup_{B\in\mathcal B}r_B<\infty$. By the $5R$-covering lemma, see for example \cite[Theorem 1.2]{Hein}, there exists a disjoint subcollection $\mathcal B'=\{B_\tau\}_\tau \subset \mathcal B$ such that 
	\[
	\bigcup_{B\in\mathcal B} B\subset \bigcup_{B\in\mathcal B'}5B.
	\]
Here one can easily check that $\mathcal B'$ is necessarily finite. Let $L_\tau\coloneqq 5B_\tau$ for each $\tau$. Obviously we have that $L_\tau \subset c_1 B$ for some $c_1>0$ depending on the quasi-metric $\d$ as $r_{L_\tau}=2^s \leq r_B$. It remains to show the bounded overlap property. This follows by a well known argument that we include here for completeness. 

For $x\in {\mathbb{X}}$ and $\rho>0$ let $I_x\coloneqq \{\tau:\, x\in 5\rho B_\tau\}$. We have that $B(x,5^{-1}2^s)\subset cB_\tau\subset \tilde c B(x,5^{-1} 2^s)$ for all $\tau \in I_x$, where $c,\tilde c$ depend on the quasi-metric of ${\mathbb{X}}$ and on $\rho$. Since $|\cdot|$ is doubling and the balls $B(x,5^{-1}2^s) $ and $5\rho B_\tau$ intersect and have comparable radii we get that $|B(x,5^{-1}2^s)|\eqsim_\rho |B_\tau|$ for all $\tau \in I_x$, with implicit constants depending on the homogeneous metric structure of ${\mathbb{X}}$ and on $\rho$. Since the balls $B_\tau$ are disjoint we now have
\[
|B(x,5^{-1}2^s)|\gtrsim_{\rho,{\mathbb{X}}} \Big|\bigcup_{\tau \in I_x}B_\tau\Big|=\sum_{\tau\in I_x} |B_\tau| \gtrsim_{\rho,{\mathbb{X}}} \sharp I_x |B(x,5^{-1}2^s)|
\]
and since $|\cdot|$ is doubling we get $\sharp I_x\lesssim_{\rho,{\mathbb{X}}} 1$ uniformly in $x$, with implicit constants depending on the homogeneous metric structure of $({\mathbb{X}},\d,|\cdot|)$ and on $\rho$. Thus for $x\in {\mathbb{X}}$ we have
\[
\sum_\tau \ind_{L_\tau}(x)=\sum_{\tau} \ind_{5B_\tau}(x)=\sharp I_x\lesssim_{\rho,{\mathbb{X}}} 1
\]
and the proof is complete.
\end{proof}

Finally we record a standard estimate for doubling measures that allows us to compare the ratio of radii of nested balls by the corresponding ratio of their measures; see for example \cite[(4.16)]{Hein}.

\begin{lemma}\label{l.ratio} Let $(\mathbb X,|\cdot|,\d)$ be a quasi-metric space of homogeneous type, that is, $|\cdot|$ is doubling. Then there exist constants $\Delta_{\mathbb X},\delta_{\mathbb X}>0$ depending only on the homogeneous metric structure of $\, \mathbb X$ such that for every pair of metric balls $B(x,r)\subset B(z,R)$ we have
	\[
	\frac{|B(x,r)|}{|B(z,R)|} \geq \Delta_{\mathbb X} \big(\frac{r}{R}\big)^{\delta_{\mathbb X}}.
	\]
In fact one can take $\Delta_{\mathbb X}\eqsim 1/\beta$ and $\delta_\mathbb X\eqsim \log_2 \beta$ with $\beta$ the doubling constant of $|\cdot|$ and the implicit constants depending on the quasi-metric constant of $\d$.
\end{lemma}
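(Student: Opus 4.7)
The plan is a direct iteration of the $(2,\beta)$-doubling property after first re-centering $B(z,R)$ around $x$ via the quasi-triangle inequality. Since $B(x,r)\subset B(z,R)$ gives $\d(x,z)<R$, for any $y\in B(z,R)$ one has
\[
\d(x,y)\leq c_\d\bigl(\d(x,z)+\d(z,y)\bigr)<2c_\d R,
\]
so that $B(z,R)\subset B(x,2c_\d R)$. This reduces the task to comparing $|B(x,r)|$ with $|B(x,2c_\d R)|$, where doubling can be applied directly.

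Next I would let $k$ be the smallest non-negative integer with $2^k r\geq 2c_\d R$ (which exists under the natural normalization $r\leq R$, and is $0$ otherwise) and iterate the $(2,\beta)$-doubling property $k$ times to obtain
\[
|B(z,R)|\leq |B(x,2c_\d R)|\leq |B(x,2^kr)|\leq \beta^k |B(x,r)|.
\]
The choice of $k$ gives $k\leq \max\{0,\log_2(2c_\d R/r)\}+1$, hence
\[
\beta^k\leq \beta\cdot (2c_\d R/r)^{\log_2\beta},
\]
and rearranging yields
\[
\frac{|B(x,r)|}{|B(z,R)|}\geq \beta^{-1}(2c_\d)^{-\log_2\beta}\left(\frac{r}{R}\right)^{\log_2\beta},
\]
which is exactly the claimed inequality with $\delta_\mathbb X=\log_2\beta$ and $\Delta_\mathbb X=\beta^{-1}(2c_\d)^{-\log_2\beta}$. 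In particular $\Delta_\mathbb X\eqsim 1/\beta$ and $\delta_\mathbb X\eqsim \log_2\beta$, with the implicit constants depending only on $c_\d$, as asserted.

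There is no real obstacle: the argument is a bookkeeping exercise in which the quasi-triangle constant $c_\d$ enters only through the re-centering step, while the exponent $\log_2\beta$ is forced by the factor of $2$ in the doubling hypothesis. The only mildly delicate point is handling the trivial regime where $2c_\d R\leq r$ (so $k=0$), where $B(z,R)\subset B(x,r)$ gives $|B(x,r)|/|B(z,R)|\geq 1$ directly and the claimed inequality is subsumed by the previous computation.
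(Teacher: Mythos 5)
Your argument is correct and is the standard iteration-of-doubling proof. The paper itself does not supply a proof of Lemma~\ref{l.ratio}, but simply cites \cite[(4.16)]{Hein}, which records precisely this fact; the argument there is the same two-step reasoning you give (re-center $B(z,R)$ about $x$ using the quasi-triangle inequality so that $B(z,R)\subset B(x,2c_\d R)$, then apply $(2,\beta)$-doubling $O(\log_2(R/r))$ times). All the steps check out: $x\in B(x,r)\subset B(z,R)$ gives $\d(x,z)<R$, the re-centering then gives the inclusion $B(z,R)\subset B(x,2c_\d R)\subset B(x,2^k r)$ for your choice of $k$, and the passage from $\beta^k$ to $\beta\,(2c_\d R/r)^{\log_2\beta}$ is the identity $\beta^{\log_2 t}=t^{\log_2\beta}$. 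This reproduces $\delta_{\mathbb X}=\log_2\beta$ and $\Delta_{\mathbb X}=\beta^{-1}(2c_\d)^{-\log_2\beta}\eqsim 1/\beta$ exactly as claimed.

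One small remark on your closing paragraph: the regime $2c_\d R\leq r$ is not really ``subsumed by the previous computation'' as you say, because when $k=0$ the displayed bound $\beta^k\leq\beta(2c_\d R/r)^{\log_2\beta}$ may fail (the right-hand side tends to $0$ as $r/R\to\infty$). The cleaner way to say it is that the lemma is implicitly stated and used under the natural normalization $r\leq R$ (as is standard, and as in Heinonen), where $k\geq 1$ and your estimate on $\beta^k$ applies; without some such restriction the inequality as literally stated cannot hold uniformly (e.g.\ in a bounded space, take both radii larger than the diameter). This is a convention issue with the statement rather than a defect in your argument.
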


\subsection{Sparse collections of stopping balls} In this subsection we employ the Whitney decomposition  in order to construct stopping collections of metric balls associated with a given pair of functions. As the underlying measure is doubling,  the maximal operators
\[
\mathrm{M}_p f \coloneqq \sup_{B} \l f  \r_{p,B}\ind_{B},\qquad p\geq 1,
\]
with the supremum take over all metric balls in $\mathbb X$, satisfy
\[
\|\mathrm{M}_p\|_{p\to p,\infty} \lesssim_{ {\mathbb{X}}} 1, \qquad \|\mathrm{M}_p\|_{q\to q} \lesssim_{{\mathbb{X}},q} 1, \quad q>p\geq 1.
\]
A local version  of the maximal function $\mathrm{M}_p$ is introduced as follows. Given $\Delta>1$,  a non-empty open set $\Omega\subset \mathbb X$, $f\in L^p_{\mathrm{loc}} (\mathbb X), $ 
\[
\mathrm{M}_p ^{\Omega,\Delta} f(x) \coloneqq \sup_{B:\, \mathrm{dist}(B,\partial \Omega)\geq \Delta  r_B} \l  f  \r_{p,B}\ind_{B}(x), \qquad x\in \Omega.
\]
Note that all the balls contributing in the supremum defining $\mathrm{M}_p ^\Omega$ are well inside $\Omega$. We now construct the sparse collection of stopping balls that will be use for the proofs of our main results.

\begin{lemma}\label{lem:tedious} Let $K>1$ be a positive integer, $1\leq p_1,p_2<\infty$ and for $i\in\{1,2\}$ let $f_i\in L^{p_i} _{\mathrm{loc}}(\mathbb X)$ be a pair of functions supported in $c_oB_0$. For every $q>1$ sufficiently large there exist a constant $c_1>1$ depending only on $c_o,q$ and the homogeneous structure of $\, \mathbb X$, open sets \[E_K\subset E_{K-1}\subset \cdots \subset E_1\subset E_0\coloneqq c_o B_0\] and collections of metric balls $\mathcal B_1,\ldots,\mathcal B_K$ with the following properties.
\begin{enumerate}
\item [\emph{(i)}]  $\mathcal B_0\coloneqq \{c_oB_0\}$, and for $k\geq 1$ each collection $\mathcal B_k$ is a $q$-Whitney cover of $E_k$. In particular $\mathcal B_k$ satisfies properties \emph{(i)--(vi)} of Lemma~\ref{lem:whitney} with $\eta=q$.  Furthermore $q$ has the property \eqref{eq:qcont} below.
\item[\emph{(ii)}] Denoting
\[
\mathcal B_k(B)\coloneqq\{L\in \mathcal B_k: c_o L\cap B\neq \varnothing\}, \qquad B\in \mathcal B_{k+1},\quad k\in\{0,\ldots,K-1\},
\]
we have 
$r_B\leq \frac12 r_L$ for all $L \in \mathcal B_{k}(B)$. In particular
\[
\sup_{B\in\mathcal B_{k+1}} r_{B}\leq \frac12 \sup_{B\in\mathcal B_{k}}r_{B}.
\]
Furthermore we have that $|E_{k+1}|\leq |E_k|/2$ for all $k\geq 0$.  
\item[\emph{(iii)}] There exists $\zeta=\zeta(\mathbb X)>0$ such that the collection $\bigcup_{k=0} ^K \mathcal B_k$ is $\zeta$-sparse.	
\item[\emph{(iv)}] For every $k\in\{0,\ldots,K-1\}$ and $x\in E_k\setminus E_{k+1}$ there holds
\[
|f_i(x)| \lesssim_{\mathbb X}  \inf_{\substack{B\in\mathcal B_k\\q B\ni x}} \l  f_i  \r_{p_i,c_1B}, \qquad i=1,2.
\]
\item[\emph{(v)}] For every $k\in\{0,\ldots,K-1\}$ $B\in\mathcal B_{k+1}$, $L\in \mathcal B_{k}(B)$, there holds 
\[
\langle  f_i  \rangle_{p_i,qB}\lesssim_{\mathbb X}  \langle  f_i  \rangle_{p_i,c_1 L}, \qquad i=1,2.
\]
\end{enumerate}
\end{lemma}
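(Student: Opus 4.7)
The plan is to iteratively construct the sets $E_k$ and collections $\mathcal{B}_k$ by a local stopping-time argument, with Whitney covers connecting adjacent generations. I will initialize $E_0 = c_oB_0$ and $\mathcal{B}_0 = \{c_oB_0\}$. Inductively, supposing $\mathcal{B}_k$ is a $q$-Whitney cover of $E_k$ for a large parameter $q$ to be fixed at the end, for each $L \in \mathcal{B}_k$ I introduce the exceptional set
\[
H_L \coloneqq \bigcup_{i=1,2}\Big\{ x \in qL \st \mathrm{M}_{p_i}^{qL}(f_i\ind_{c_1L})(x) > C_0 \langle f_i \rangle_{p_i, c_1L}\Big\},
\]
where $\mathrm{M}_{p_i}^{qL}$ is a local maximal operator supported on balls well inside $qL$. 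By the weak-type $(p_i,p_i)$ bound and the overlap constant $M$ from Lemma~\ref{lem:whitney}, a large choice of $C_0$ will guarantee $|H_L| \leq |qL|/(4M)$. Setting $E_{k+1} \coloneqq \bigcup_{L \in \mathcal{B}_k} H_L$ then yields $|E_{k+1}| \leq |E_k|/2$ by finite overlap, and I take $\mathcal{B}_{k+1}$ to be a $q$-Whitney cover of $E_{k+1}$; this takes care of (i) and the measure part of (ii).

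Next I would tackle the radius estimate in (ii). Given $B \in \mathcal{B}_{k+1}$ and $L \in \mathcal{B}_k(B)$, fixing $z \in c_oL \cap B$, Proposition~\ref{prop:dist} gives $\dist(z,\partial E_k) \leq b\Lambda r_L$, while the Whitney definition of $B$ forces $\dist(c_B, \partial E_{k+1}) \geq q r_B /b$. Exploiting $E_{k+1} \subset E_k$ and the quasi-triangle inequality yields $q r_B / b \leq c_\d(r_B + b\Lambda r_L)$, hence $r_B \leq C\Lambda r_L / q$. Combining with the dyadic form (vi) of Whitney radii, choosing $q$ large enough that $C\Lambda(q)/q < 1/2$ will deliver $r_B \leq r_L/2$, provided the underlying Whitney lemma permits making $\Lambda(q)/q$ small.

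Properties (iv) and (v) then follow from the stopping-time control. For (iv), any Lebesgue point $x \in qB \setminus H_B$ satisfies the bound $|f_i(x)| \lesssim \langle f_i\rangle_{p_i,c_1 B}$ by definition. For (v), I would use the Whitney contact property: there is $y \in \Lambda B \cap E_{k+1}^c$, and the radius bound from the previous paragraph shows $\Lambda r_B \ll \dist(c_B, \partial E_k)$, so $y \in E_k$ and therefore $y \in E_k \setminus E_{k+1}$. A distance estimate places $y \in c_2 L$ with $c_2 \leq c_1$, whence $y \notin H_L$; the maximal bound at $y$ applied to the ball $B(c_B,(q+\Lambda)r_B)$, which contains $qB$ and has comparable measure by doubling, gives $\langle f_i\rangle_{p_i,qB} \lesssim \langle f_i\rangle_{p_i, c_1 L}$.

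For sparsity (iii), I will set $E_B \coloneqq B \setminus E_{k+1}$ for $B \in \mathcal{B}_k$; the bound $|H_L|/|qL| \leq 1/(4M)$ together with finite overlap ensures $|E_B| \gtrsim |B|$, while the $E_B$'s across distinct generations are disjoint by the strict decrease of $\{E_k\}$, and within a single generation the $1/5$-disjointness from (v) of Lemma~\ref{lem:whitney} (with $q>5$) gives disjointness. The principal obstacle will be the precise calibration of $q$ in the radius step: all constants $\Lambda$, $M$, and $c_1$ depend on $q$, and they must be arranged so that $\Lambda(q)/q$ sits below the threshold for the dyadic radius halving. This interdependence is the technically delicate aspect, requiring flexibility in the Whitney lemma that the Alvarado--Mitrea construction provides.
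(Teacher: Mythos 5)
Your overall blueprint matches the paper's: iterated stopping time driven by local maximal functions, Whitney covers of the successive exceptional sets $E_k$, measure decay by the weak-type bound, and sparsity from the $1/5$-disjointness and the nesting $E_{k+1}\subset E_k$. The stopping condition $H_L$ (per Whitney ball, with a threshold relative to $\langle f_i\rangle_{p_i,c_1L}$) plays the role of the paper's intrinsic condition \eqref{e:chdelta}, and (iv), (v) are deduced the same way (Lebesgue differentiation off the exceptional set for (iv); find a non-stopped point $y$ near $B$ and apply the maximal bound at $y$ for (v)).

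The genuine gap is in your radius step. You argue from the two Whitney distance estimates that $qr_B/b\leq c_\d(r_B+b\Lambda r_L)$ and hence $r_B\lesssim (\Lambda(q)/q)\,r_L$, then propose to choose $q$ large so that $\Lambda(q)/q<1/2$. This is not available: in the Whitney covering lemma (Lemma~\ref{lem:whitney}, or any variant for quasi-metric spaces) $\Lambda(q)$ grows at least linearly in $q$ (typically $\Lambda\sim c_\d^2 q$), so $\Lambda(q)/q$ is bounded below by a structural constant and cannot be pushed below $1/2$. Your own distance estimate only shows $r_B\lesssim r_L$, which is not the halving. The paper bypasses this by never trying to make a geometric ratio small: it shows from the maximal theorem that $|qL\cap E_{k+1}|\leq 2^{-\Theta/2}|L|$, deduces (via the containment $qB\subset qL$, which uses \eqref{eq:qcont}) that $|qB|\leq 2^{-\Theta/2}|L|$, and then converts this measure ratio into a radius ratio by Lemma~\ref{l.ratio}. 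The radius halving is therefore achieved by taking the stopping threshold (your $C_0$, the paper's $2^\Theta$) large, not $q$ large. You already have the measure ingredient $|H_L|\leq |qL|/(4M)$; the missing idea is to route through Lemma~\ref{l.ratio} with a large $C_0$ rather than through the distance estimate.

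A smaller slip: for (iii) you set $E_B\coloneqq B\setminus E_{k+1}$ and invoke the $1/5$-disjointness. But the Whitney balls $B\in\mathcal B_k$ at full radius overlap, so these $E_B$'s need not be disjoint within a generation. You should take $E_B\coloneqq \tfrac15 B\setminus E_{k+1}$ as the paper does; the lower bound $|E_B|\gtrsim |B|$ then requires both doubling (to compare $|\tfrac15 B|$ with $|B|$) and the measure decay $|B\cap E_{k+1}|\ll|B|$ with $C_0$ large enough.
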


\begin{proof} During the course of this proof, the  following positive constants will be so chosen.

\begin{itemize}[leftmargin=5.5mm]
\item[$\cdot$]  Remember that we denote by $c_\d$  the quasimetric constant of $\,\mathbb X$ and by $c_o$ the localization constant in \eqref{eq:localization}. 
 \item[$\cdot$]We will impose the condition that $q>1$ is sufficiently large, depending only on $c_o$ and the quasi-metric constant $c_\d$, so that the following holds: If $L,B$ are two metric balls then
 \begin{equation}\label{eq:qcont}
 c_o L \cap B \neq \varnothing\quad\text{and}\quad L\nsubseteq qB \implies B\subset qL.
 \end{equation}
In fact taking  $q\geq 10c_\d ^2 c_o$ will suffice. 
\item[$\cdot$] The constants $b,D_1,D_2,D_3$ from Proposition~\ref{prop:dist}, with input the (sufficiently large) value of $q$ of the statement, will be used throughout the proof. A Whitney parameter $\eta$  will be chosen to be sufficiently large in $[ 4(c_\d)^2 b q,\infty )$ and $\Lambda =\Lambda(\eta)$ is the corresponding value from  Lemma~\ref{lem:whitney}. 
\item[$\cdot$] $\Theta$ is a large constant depending on the weak-type inequality for the geometric maximal operator $\mathrm{M}_1$ and thus ultimately on $\mathbb X$, and on $\eta$. 
\end{itemize}

Before we begin the proof we notice that for (iii) it will be enough to prove:
\begin{itemize}
	\item [(iii$^\prime$)]  There exists $\zeta=\zeta(\mathbb X)>0$ such that for each $k\geq 0$ and each $B\in\mathcal B_k$ there exists $E_B\subset E_{k}\setminus E_{k+1}$ with $|E_B|>\zeta |B|$, and the sets $\{E_B\}_{B\in\mathcal B_k}$ are pairwise disjoint.
\end{itemize}
Clearly (iii$^\prime$) will imply (iii). Indeed, for fixed $k$ the sets $\{E_B\}_{B\in\mathcal B_k}$ are pairwise disjoint and $|E_B|>\zeta|B|$ for all $B\in\mathcal B_k$ because of property (iii$^\prime$). Furthermore
	\[
	\bigcup_{B\in\mathcal B_k}E_B \subset E_k\setminus E_{k+1}
	\]
and the collection $\{E_k\setminus E_{k+1}\}_{k\geq 1}$ is clearly pairwise disjoint since the sequence of sets $\{E_k\}_k$ is decreasing.

Let us fix a parameter $\eta>q$ 
as described above. Having already set $E_0\coloneqq c_o B_0$ and $\mathcal B_0\coloneqq \{c_o B_0\}$, start the proof proper by defining
	\[
	E_1\coloneqq \bigcup_{i=1,2}\Big\{x\in c_o B_0:\,  \mathrm{M}_{p_i} f_i(x) > 2^\Theta \l f_i\r_{p_i,c_oB_0}\Big\}.
	\]
Then, let $\mathcal B_1$ be the $\eta$-Whitney cover of $E_1$ provided by Lemma~\ref{lem:whitney}. Note that $\eta B\subset E_1\subset E_0=c_oB_0$ for all $B\in\mathcal B_1$ by the properties of the Whitney decomposition. Since $\eta>c_o$ this proves (i) for the base step of our inductive construction.

Next is the verification of    properties (ii) to (v) in the base step $k=0$.
In order to prove (ii) note that the maximal theorem provides the estimate
\[
|E_1|\lesssim_{\mathbb{X}}  2^{-\Theta}  |B_0| 
\]
and thus   choosing $\Theta$ sufficiently large we will have that $|E_1|\leq |B_0|/2 \leq |E_0|/2$.

Since $B\subset  B_0$ for every $B\in\mathcal B_1$, Lemma~\ref{l.ratio} implies that 
\[
\Big(\frac{r_B}{ r_{B_0}}\Big)^{\delta_{\mathbb{X}}} \lesssim_{\mathbb{X}}  \frac{| B |}{| B_0|}\leq\frac{|E_1|}{|B_0|}\lesssim  2^{-\Theta}
\]
and thus (ii) follows if $\Theta$ is chosen sufficiently small depending on the homogeneous structure of $\,\mathbb X$.
For property (iii$^\prime$)  we define $E_{B_0}\coloneqq B_0\setminus E_1$. As  $|E_1|\leq  2^{-1}|B_0|$ by our previous choice 
\[
E_{B_0}\subset B_0\setminus E_1,\qquad |E_{B_0}| \geq |B_0|-|E_1|\geq 2^{-1}|B_0| 
\]
as desired.

To prove (iv), note that  $x\in E_0\setminus E_1=c_oB_0\setminus E_1$ the Lebesgue differentiation theorem yields
\[
|f_i(x)|\leq \mathrm{M} ^{B_0} _{p_i}f_i(x)\leq 2^\Theta \l  f_i  \r_{p_i,c_oB_0}.
\]
To prove (v), notice that by the properties of the Whitney decomposition we may find $x\in qB\in \mathcal B_1$ and $y\in E_0\setminus E_1=c_oB_0\setminus E_1$ with $\d(x,y)\leq \Lambda r_{B}$. It is easy to check that the ball $B'$ of radius $2c_\d\Lambda r_B$ and center $x$ contains $qB$ and $y\in B'$ as well. Therefore  for $i=1,2$ we will have
\[
 \langle  f_i \rangle_{p_i, qB} \lesssim_{\mathbb X}   \langle  f_i \rangle_{p_i, B'} \leq     \mathrm{M}_{p_i} f_i(y) \lesssim   \langle| f_i|\rangle_{p_i,c_oB_0}
\]
thus concluding the proof of (v) and the treatment of the $k=0$ case.

Now assume inductively that that $(E_1,\mathcal B_1),\ldots, (E_k,\mathcal B_{k})$ with the desired properties have been constructed,  and proceed with the construction of $(E_{k+1},\mathcal B_{k+1})$. To that end let
\begin{equation}
\label{e:chdelta}
E_{k+1}\coloneqq \bigcup_{i=1,2} \bigg\{x\in E_k:\, \mathrm{M}_{p_i} ^{E_k,\Delta}f_i(x)  >  2^\Theta  \l  f_i \r_{p_i,D_2B\left (x,\frac{\d(x,\partial E_k)}{\eta}\right)}  \bigg\}, \qquad \Delta\coloneqq 2c_\d\Lambda,
\end{equation}
and we let $\mathcal B_{k+1}$ be the $\eta$-Whitney collection associated with the open set $E_{k+1}$. Since $\eta>q$ we have that (i) is automatically satisfied. 
In order to prove (ii) let $L\in\mathcal B_{k}$ and $x\in q L \cap E_{k+1}$.  Proposition~\ref{prop:dist} tells us that
\begin{equation}
\label{e:supradius}
\begin{split}
&\frac{\eta}{b}\leq \frac{\d(x,\partial E_k)}{r_L} \leq b\Lambda\qquad\text{and}
\\
& B(x) \coloneqq  B\left (x,\frac{\d(x,\partial E_k)}{\Lambda}\right) \subset D_1 L\subset D_2  B\left (x,\frac{\d(x,\partial E_k)}{\eta}\right)\subset D_3\frac{\Lambda}{\eta}L.
\end{split}
\end{equation}
 We will have for either $i=1$ or $i=2$ that
\begin{equation}
\label{e:sup}
\l  f_i  \r_{p_i,D_1 L} \lesssim_{\mathbb X,\eta}  \l  f_i \r_{p_i,D_2B\left (x,\frac{\d(x,\partial E_k)}{\eta}\right)}  <  2^{-\Theta} \sup_{\substack{B'\ni x\\ \d(B',\partial E_k)>\Delta r_{B'}}} \l  f_i  \r_{p_i,B'};
\end{equation}
the second inequality is the membership $x\in E_{k+1}.$
The choice of $\Delta$ in \eqref{e:chdelta} ensures that  the balls appearing in the above supremum   are all contained in $B(x)$, and \emph{a fortiori} in  $D_1 L$. This entails
\[
 q L\cap E_{k+1} \subset \bigcup_{i=1,2}\Big\{x\in\mathbb X:\, \mathrm M_{p_i}(f_i\ind_{D_1L})(x)  \gtrsim_{\mathbb X,\eta}  {2^\Theta}  \l  f_i  \r_{p_i,D_1L}\Big\}
\]
so that, by the maximal theorem, we get the estimate $| q L\cap E_{k+1}|\lesssim 2^{-\Theta} |L|$, so that
\begin{equation}
\label{e:reduce}
| q L\cap E_{k+1}|\leq 2^{-\frac{\Theta}{2}} |L| 
\end{equation}
provided that $\Theta$ is chosen large enough depending only on $\mathbb X$ and $\eta$. Summing over $L\in\mathcal B_k$ and using the finite overlap of the balls in $\mathcal B_k$ and the fact that they cover $E_{k+1}$ yields $|E_{k+1}|\lesssim 2^{-\frac{\Theta}{2}}|E_k|$ and so $|E_{k+1}|\leq |E_k|/2$ if $\Theta$ was chosen sufficiently large.

Consider now any $B\in \mathcal B_{k+1}$ and recall that $\mathcal B_k(B)=\{L\in\mathcal B_k:  c_oL\cap B\neq \varnothing.\}$ As $B\subset E_{k+1}\subset E_k$ and $E_k$ is covered by the balls in $\mathcal B_k$, $\mathcal B_k(B)$ is not an empty collection. 
If $L\in \mathcal B_k(B) $, then $L\nsubseteq q B$ otherwise, since $q B\subset \eta B\subset  E_{k+1}$, the contradiction
\[
|L |=|  L \cap q B|\leq |L \cap E_{k+1}|\leq  2^{-\frac{\Theta}{2}} |L|
\]
is reached in view of the containment  $q B\subset  E_{k+1}$ and \eqref{e:reduce}. Therefore $L\nsubseteq qB$ and $c_oL\cap B\neq \varnothing$ which by the definition of $q$ implies that $qB\subset q L$ and thus
$
 | qB| \leq  |q L\cap E_{k+1}|\leq   2^{-\frac{\Theta}{2}}  | L|
$, again by  \eqref{e:reduce}. 
The last two observations and an application of Lemma~\ref{l.ratio}  yields 
\begin{equation}
\label{e:kappause}
L\in \mathcal B_k(B)\implies
r_B\leq \beta_12^{-\frac{\Theta\beta_2}{2}} r_L
\end{equation}  which yields (ii) if $\Theta$ was chosen sufficiently large depending on the homogeneous structure of $\mathbb X$.
Finally, for each $B=B(c_B,r_B)\in\mathcal B_k$ we set $E_B\coloneqq B(c_B,\frac15 r_B)\setminus E_{k+1}$. Clearly $E_B\subset B\setminus E_{k+1}\subset E_k\setminus E_{k+1}$ and in addition, 
\[
|E_B|\geq |B(c_B,\frac15 r_B)|- |B\cap E_{k+1}|\geq c_{\mathbb X}^{-1} |B|-2^{-\frac{\Theta}{2}} |B| \geq \zeta |B|. \]
 In the second inequality  $c_{\mathbb X}>1$ is some   some structural constant dependent on by the doubling property of the measure $|\cdot|$,  and \eqref{e:reduce} has been used. Then     (iii$^\prime$) has been achieved, provided that $\Theta$ is sufficiently large and $\zeta$ is sufficiently small, depending only on $\mathbb X$.

The argument for (iv) is as follows. Let $x\in c_o L\setminus E_{k+1}\subset qL\setminus E_{k+1}$ for $L\in \mathcal B_k$. Define
\[
B'(x)\coloneqq D_2B\left (x,\frac{\dist(x,\partial E_k)}{\eta}\right)\subset c_1 L,\qquad c_1\coloneqq\Lambda D_3/\eta
\]
with $D_3$ as in Proposition~\ref{prop:dist}.
Using the Lebesgue theorem for the first inequality and the definition of the set $E_{k+1}$ for the second, we get
\[
|f_i(x)| \leq  2^\Theta   \l  f_i \r_{p_i,B'(x)}   \lesssim  \l  f_i \r_{p_i,c_1L}, \qquad i=1,2.
\]

Turning to the proof of (v), fix $B\in \mathcal B_{k+1}$ and $L\in \mathcal B_k(B)$, $x_L\in B\cap L$. By the Whitney property of $B$ we may find  $y\in \Lambda B\setminus E_{k+1}$ so that by the triangle inequality $\d(x_L,y)\leq c_\d(\Lambda+c_\d) r_B\leq 2c_\d \Lambda r_B$. By another application of the triangle inequality, it follows that 
\[\d(c_L,y)\leq c_\d \d(x_L,y) + c_\d r_L \leq   2(c_\d)^2 \Lambda r_B +c_\d r_L \leq q r_L
\]
 provided that $\Theta$ is chosen large enough, in virtue of \eqref{e:kappause}. Therefore $y\in q L\setminus E_{k+1} $, and since $L\in\mathcal B_k$ Proposition~\ref{prop:dist} yields
\[
\frac{1}{b}\eta \leq \frac{\dist(y,\partial E_k)}{r_L}\leq b \Lambda.
\]
Let $z\notin E_k$ such that $\dist(y,z)\geq b^{-1}\eta r_L$ and let $h\in \Lambda B$. Then by the triangle inequality
\[
\d(h,z)\geq c_\d ^{-1} b^{-1} \eta r_L - 2 c_\d \Lambda r_B - 2qc_\d r_L >2c_\d \Lambda r_B
\]
provided that $\Theta $ is large enough. Thus the ball $\Lambda B\supset \eta B\supset q B$ satisfies $\dist(\Lambda B,\partial E_k)>\Delta r_B$ and also contains the point $y$. Since $\Lambda>\eta>1$ we get for $j=1,2$
\[
 \langle  f_i \rangle_{p_i,q B} \lesssim_{\mathbb X}    \langle  f_i \rangle_{p_i, \Lambda B} \leq   \mathrm{M}_{p_i}^{E_k,\Delta} f_i(y) \lesssim   \langle| f_i|\rangle_{p_i,c_1 L}, \qquad i=1,2
\]
completing the proof of (v) and in turn of the Lemma.
\end{proof}

\section{Proof of Theorem \ref{th:SD}}\label{sec:proofmain} Throughout the section we fix a space of homogeneous type $({\mathbb{X}},\d,|\cdot|)$.

\subsection{Stopped forms} \label{ss:sf}

Let $c_o$ be the constant in \eqref{eq:localization}, $L\subset\mathbb X$ be a ball of radius $r_L$, and  $\mathcal  B$ be a Whitney collection with the property that 
\begin{equation}\label{eq:stopwhit}
B\in \mathcal B, \, c_o L \cap B \neq \varnothing \implies r_B \leq r_L/2,\quad B\subseteq qL.
\end{equation}
We set $E\coloneqq \bigcup \{B:B\in \mathcal B\}$ and for $1\leq p<\infty$, $h\in L^\infty(\mathbb X)$, introduce the $p$-stopping norm with data $(L,\mathcal B)$ by
\begin{equation}
\label{e:stop} \|h\|_{p,(L,\mathcal B)}\coloneqq \left\|h \ind_{c_o L\setminus E}\right\|_\infty + \sup_{B\in 
\mathcal B}  \l  h  \r_{p,B}.
\end{equation}

\begin{remark}\label{rmrk:lp<stopped} The stopping norm controls the local $L^p$ norms, in the following sense. Notice that the balls $ \{B\in \mathcal B:B\cap c_o L\neq \varnothing\}$ cover $c_o L\cap E$, are contained in $ q L$ and have bounded overlap. Therefore
\[\begin{split}
|L|\l  h \ind_E\r_{p,c_o L}^p  &\leq   \sum_{\substack{ B\in \mathcal B\\ B\cap c_o L\neq \varnothing} }|B| \l h \r_{p,B}^p   
\leq  \left(\sup_{B\in 
\mathcal B}  \l h \r_{p,B}^p\right)  \sum_{B\in \mathcal B_L} |B| 
  \lesssim |L|   \|h\|_{p,(L,\mathcal B)}^p.
\end{split}
\]
As $\l h\ind_{c_o L\setminus E} \r_{p,c_o L} \leq  \|h\|_{p,(L,\mathcal B)}$, it follows that
\begin{equation}
\label{e:stopbd1}  \langle h\ind_E\rangle_{p,c_o L} \lesssim \|h\|_{p,(L,\mathcal B)}.
\end{equation}
\end{remark}

\begin{remark}\label{rmrk:stopped<lp} Let $\{\mathcal B_k\}_k$ denote the Whitney collections constructed in Lemma~\ref{lem:tedious} for a pair of functions $f_1,f_2$ supported in $c_o B_0$ and $q>c_o$, and let $ L=B_k\in\mathcal B_k$ and $\mathcal B=\mathcal B_{k+1}$ for some $k\geq 0$. We then know by property (ii) of the lemma that if $c_o B_k \cap B \neq \varnothing$ for $B\in\mathcal B_{k+1}$ we will have that $r_L \leq r_B/2$ so the setup of \eqref{eq:stopwhit} applies. Then the estimate of Remark~\ref{rmrk:lp<stopped} can be reversed in the following sense:
\[
\begin{split}
\| f_i \ind_{c_o B_k }\|_{p_i,(B_k,\mathcal B_{k+1})} &\leq \left\|f_i \ind_{c_oB_k\setminus E_{k+1}}\right\|_\infty + \sup_{B\in 
\mathcal B_{k+1}}  \langle f_i \ind_{c_oB_k} \rangle_{p_i,B} \lesssim \l f \r_{p_i,c_1 B_k}.
\end{split}
\]
The estimate for the first summand follows by the fact that  $ c_o B_k \setminus E_{k+1}\subset qB_k\setminus E_{k+1}\subset  E_k\setminus E_{k+1}$ and a use of (iv) of Lemma~\ref{lem:tedious}. For the second summand above note that all the balls $B\in\mathcal B_{k+1}$ that participate in the supremum must intersect the ball $c_o B_k$, namely we have that $B_k\in \mathcal B_k(B)$. The estimate then follows by property (v) of Lemma~\ref{lem:tedious}. 
\end{remark}
We now consider an operator $T$ of form
\[
T_{\sigma} ^\tau f (x)= \sum_{\sigma\leq s <\tau} [T(s)f](x), \qquad x\in\mathbb X,\quad \sigma<\tau,
\]
satisfying \eqref{eq:localization}. For $L,\mathcal B$ as above, a  partition of unity $\{\phi_B:B\in \mathcal B\}$ subordinate to $\mathcal B$, and for $\sigma> s_L$, we define the stopping form
\begin{equation}
\label{e:stofo}
\Lambda^\sigma _{(L,\mathcal B)}(h_1,h_2)\coloneqq  \left\langle T_\sigma ^{s_{L}}\left[ h_1\ind_{L\setminus E}\right],  h_2 \right\rangle  +\sum_{B\in\mathcal B}\left\langle  T_{ s_{B}\vee \sigma} ^{s_{L}}\left[h_1 \ind_{L} \phi_{B}\right], h_2\right\r.
\end{equation}
where $a\vee b\coloneqq \max\{a,b\}$.
\begin{lemma}\label{lemma:iter} Let  $T$ satisfy the assumptions of Theorem~\ref{th:SD} and $L,\mathcal B$ be as in \eqref{eq:stopwhit}. Then
\[
\sup_{\sigma>0}\big|\Lambda^\sigma _{(L,\mathcal B)}(h_1,h_2)\big| \lesssim_{X} \left[C_{ p }+  \|\omega\|_{\mathrm{Dini}}\right]|L| \|h_1\|_{p_1,(L,\mathcal B)} \|h_2\|_{p_2,(L,\mathcal B)}.
\]
\end{lemma}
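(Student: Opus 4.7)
The plan is to analyze the two summands in the definition of $\Lambda_{(L,\mathcal B)}^\sigma(h_1,h_2)$ separately: the first is bounded by the uniform constant $C_p$, while the sum over $B\in\mathcal B$ is where the cancellation of the atom form of the improving hypothesis delivers $\|\omega\|_{\mathrm{Dini}}$.

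For the first summand $\langle T_\sigma^{s_L}[h_1\ind_{L\setminus E}],h_2\rangle$ I would use the defining bound $\|h_1\ind_{L\setminus E}\|_\infty\le \|h_1\|_{p_1,(L,\mathcal B)}$ of the stopping norm. Combining the uniform bound \eqref{eq:unifbdd} for $T_\sigma^{s_L}$ with the localization \eqref{eq:localization} (which confines the image to $c_oL$), pairing with $h_2\ind_{c_oL}$, and using Remark~\ref{rmrk:lp<stopped} together with Jensen to reduce local averages of $h_2$ to the stopping norm, produces
\[
|\langle T_\sigma^{s_L}[h_1\ind_{L\setminus E}],h_2\rangle|\lesssim C_p\,|L|\,\|h_1\|_{p_1,(L,\mathcal B)}\,\|h_2\|_{p_2,(L,\mathcal B)}.
\]
If the exponent available through \eqref{eq:unifbdd} is incompatible with $p_2'$, the analogous bound on the required exponent is recovered by interpolation with the single-scale $L^{p_1}\to L^{p_2'}$ improving hypothesis, before summing.

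The second summand is attacked scale-by-scale. By \eqref{eq:stopwhit}, for every $B\in\mathcal B$ the function $f_B:=h_1\ind_L\phi_B$ is supported in a ball of radius $r_B\le r_L/2$; hence for every scale $s$ appearing in $T_{s_B\vee\sigma}^{s_L}$ we have $r_B\le 2^{s-1}\le 2^s$, so $f_B$ sits at a strictly smaller scale than the operator and qualifies for the atom form of the improving. I would decompose $f_B=a_B+c_B\ind_B$ with $c_B=|B|^{-1}\int f_B$, so that $a_B$ is, up to a harmless constant, a $(p_1,r_B)$-atom. For the atom piece, duality combined with the $(p_2,p_1')$-improving of $T^*$ at a ball $L_B^s\supset B$ of radius $\sim 2^s$ yields
\[
|\langle T(s)a_B,h_2\rangle|\lesssim \omega\!\left(\tfrac{r_B}{2^s}\right)|L_B^s|^{1-\frac{1}{p_1}-\frac{1}{p_2}}|B|^{1/p_1}\langle h_1\rangle_{p_1,B}\,\|h_2\ind_{c_oL_B^s}\|_{p_2}.
\]
The hypothesis $p_2'\ge p_1$ forces $1-1/p_1-1/p_2\le 0$, so no growth in $|L_B^s|$ accumulates as $s$ increases. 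The constant piece $c_B\ind_B$ lacks cancellation; since $|c_B|\lesssim \langle h_1\rangle_{p_1,B}\le \|h_1\|_{p_1,(L,\mathcal B)}$, I would regroup $\sum_B c_B T_{s_B\vee\sigma}^{s_L}\ind_B$ and invoke \eqref{eq:unifbdd} jointly with the bounded overlap of $\mathcal B$ to absorb it into a $C_p$-type bound, rather than estimating it scale by scale.

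The scale sum $\sum_{s\ge s_B}\omega(2^{s_B-s})\lesssim \|\omega\|_{\mathrm{Dini}}$, obtained by comparison with the Dini integral, collects the $\omega$-gain from the atom part; the remaining sum over $B\in\mathcal B$ is then handled by the bounded overlap of the Whitney cover together with a Hölder step, which converts $\sum_B |B|\,\langle h_1\rangle_{p_1,B}\langle h_2\rangle_{p_2,c_oL_B^{s_B}}$ into $|L|\,\|h_1\|_{p_1,(L,\mathcal B)}\,\|h_2\|_{p_2,(L,\mathcal B)}$ via Remark~\ref{rmrk:lp<stopped}. The main obstacle is the treatment of the non-cancellative constant component $c_B\ind_B$: a naive scale-by-scale bound would give an unbounded factor of $s_L-\sigma$, so the proof must exploit the uniform $L^p$-boundedness of the full truncation, and the near-orthogonality encoded in the Whitney property of $\mathcal B$, to reassemble those constant contributions into a controlled $C_p$ term.
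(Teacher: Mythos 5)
Your plan captures the right moving parts---a Calder\'on--Zygmund split, the atom form of the improving hypothesis, and the Dini summation over scales---but there are two places where the argument as written does not close, both of which the paper resolves by a structurally different decomposition. You decompose only $h_1$; the paper performs the Calder\'on--Zygmund split $h_i=g_i+b_i$ on \emph{both} $h_1$ and $h_2$ and then treats the four cross terms $\tilde\Lambda(g_1,g_2)$, $\tilde\Lambda(b_1,g_2)$, $\tilde\Lambda(g_1,b_2)$, $\tilde\Lambda(b_1,b_2)$. This is not cosmetic: the estimate for $\tilde\Lambda(g_1,g_2)$ pairs two $L^\infty$ functions compactly supported in $c_oL$, so the uniform bound $C_p$ applies via $L^p$--$L^{p'}$ duality regardless of how $p$ relates to $p_1,p_2$. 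Your treatment of the first summand pairs $T_\sigma^{s_L}[h_1\ind_{L\setminus E}]\in L^p$ against an undecomposed $h_2$ controlled only through local $L^{p_2}$-averages, which would require $p'\le p_2$---this is not part of the hypotheses, and "interpolating with the single-scale improving" is not a routine repair, since one is a bound on truncated multiscale sums while the other is at a single scale. The cross term $\tilde\Lambda(g_1,b_2)$, which the paper treats by dualizing to $U_u^v=(T_u^v)^*$ and invoking the adjoint improving hypothesis, has no counterpart in your scheme. Finally, your handling of the non-cancellative pieces $c_B\ind_B$ is underspecified: the paper realigns the lower truncation $T^{s_L}_{s_B\vee\sigma}$ across neighboring Whitney balls via the explicit identity $T^{s_L}_{s_B\vee\sigma}=T^{s_L}_{s_{B'}\vee\sigma}+\sign(s_{B'}-s_B)\,T^{s_B\vee s_{B'}}_{(s_B\wedge s_{B'})\vee\sigma}$, which is what lets the residual be estimated by the single-scale improving. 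Your "regroup and absorb into a $C_p$-type bound" gestures at this but does not supply the mechanism.

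There is also an error in your exponent bookkeeping for the atom part. Writing $\|h_2\ind_{c_oL_B^s}\|_{p_2}=|L_B^s|^{1/p_2}\l h_2\r_{p_2,c_oL_B^s}$ and combining with the factor $|L_B^s|^{1-1/p_1-1/p_2}$ produces $|L_B^s|^{1-1/p_1}\l h_2\r_{p_2,c_oL_B^s}$; the exponent $1-1/p_1\ge0$ \emph{does} grow with $s$ when $p_1>1$, so $1-1/p_1-1/p_2\le0$ alone does not yield the claimed "no growth accumulates." If you instead keep the crude bound $\|h_2\ind_{c_oL_B^s}\|_{p_2}\le\|h_2\|_{p_2}\lesssim|L|^{1/p_2}$ and sum over $B\in\mathcal B$, you arrive at $|L|^{1/p_2}\sum_{B}|B|^{1-1/p_2}$, which does not reduce to $|L|$ without an a priori bound on the number of Whitney balls. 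The paper's argument keeps the $h_2$-quantity \emph{local} at every scale: it shows $\l b_2\r_{p_2,c_oB'(s)}\lesssim1$ for all $s$ by a type-$1$/type-$2$ case analysis of the Whitney cover, which pins the per-ball contribution at $\lesssim\|\omega\|_{\mathrm{Dini}}|B'|$ and lets the packing $\sum_{B'}|B'|\lesssim|L|$ close the estimate.
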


The proof of Lemma \ref{lemma:iter} is postponed to the next subsection. We now show how to prove Theorem \ref{th:SD} with a combination of Lemma \ref{lemma:iter} and the main decomposition of Lemma \ref{lem:tedious}. This is done as follows.

\subsection{Compiling the proof of Theorem~\ref{th:SD}}\label{s.together} In this subsection we put together all the pieces needed for the complete proof of Theorem~\ref{th:SD}. We explain in detail the steps needed to prove the conclusion for the untruncated version $T\sim \sum_s T(s)$; the proof for the truncated version is similar but simpler.

Let $T\sim\sum_s T(s)$ be a linear operator satisfying the assumptions of the theorem and $f_1,f_2$ Lipschitz functions supported on some ball $B_0$ with $r_{B_0}=2^{s_{B_0}}$. By assumption, there exists $p\in(1,\infty)$ such that $\sup_{\sigma<\tau}\|T_\sigma ^\tau\|_{L^{p}\to L^{p}}=C_{p}<\infty$. By Remark~\ref{rmrk:represent} we have
\[
\l Tf_1,f_2 \r= \l m f_1,f_2 \r + \lim_{\substack{j\to \infty}}\l T_{\sigma_j} ^{\tau_j} f_1,f_2\r
\]
for some $m\in L^\infty$. 

\subsection{First term} The estimate for this term is very easy. Applying Lemma~\ref{lem:tedious} for Lipschitz functions $f_1,f_2$ supported in some ball $B_0$ we will have that
\[
\begin{split}
|\l mf_1,f_2\r|& \leq \|m\|_\infty \sum_{k=0} ^K \l f_1 \ind_{E_k\setminus E_{k+1}}, f_2 \r + \|m\|_\infty\int_{E_{K+1}}|f_1||f_2|
\\
& \leq \|m\|_{L^\infty}   \sum_{B\in\mathcal B_k}  \sum_{k=0} ^K \l f_1 \phi_{B_k},f_2 \ind_{B_k}\r + \|m\|_\infty \|f_1\|_\infty \|f_2\|_\infty |E_{K+1}|.
\end{split}
\]
Note that the first summand above provides a sparse form. Indeed by (iv) of Lemma~\ref{lem:tedious} we will have that
\[
 \sum_{B\in\mathcal B_k}  \sum_{k=0} ^K \l f_1\phi_{B_k},f_2 \ind_{B_k}\r \lesssim  \sum_{B\in\mathcal B_k}  \sum_{k=0} ^K |B_k|\l f_1\r_{c_1B_k}\l f_2\r_{c_1B_k}
\]
and the collection $\cup_{k=0} ^K \mathcal B_k$ is sparse. For the second summand we notice that by (ii) of Lemma~\ref{lem:tedious} we have that $|E_K|\leq 2^{-K}|B_0|$ and we choose $K$ sufficiently large so that  
\[
\|m\|_\infty \|f_1\|_\infty \|f_2\|_\infty |E_{K+1}|<|\l mf_1,f_2\r|/2<\infty,
\]
and absorb this term in the left hand side. Remembering from Remark~\ref{rmrk:represent} that $\|m\|_\infty\lesssim C_p$ completes the proof for the first term.
\subsection{Second term} For the second term we now choose $\sigma_o,\tau_o\in\Z$ with $\sigma_o<\tau_o$ such that
\[
 \lim_{j\to \infty}|\l T_{\sigma_j} ^{\tau_j} f_1,f_2\r|\leq 2 |\l T_{\sigma} ^{\tau} f_1,f_2\r|,\qquad \forall \sigma\leq \sigma_o,\, \tau\geq \tau_o.
\]
Note that it is without loss of generality to assume  $s_{B_0}\geq \tau_o$ by taking a bigger ball $B_0 '\supset B_0$, if necessary. Thus it suffices to estimate
\[
|\l T_{\sigma} ^{s_{B_0}} f_1,f_2\r|=|\l T_{\sigma} ^{s_{B_0}} (f_1\ind_{B_0}),\ind_{c_oB_0}f_2\r|,\qquad  \sigma \leq \sigma_o<s_{B_0},
\]
where we used the localization principle \eqref{eq:localization}.

We then apply Lemma \ref{lem:tedious} to $f_1,f_2,$ supported in $c_o B_0$ and $q=q(c_o)>c_o$ sufficiently large so that \eqref{eq:qcont} is satisfied together with the other conclusions of the lemma. We obtain Whitney collections $\mathcal B_0=\{c_oB_0\},\ldots, \mathcal B_{K+1}$ such that $ \mathcal S=\mathcal B_0\cup \cdots \cup \mathcal B_K$ is a $\zeta$-sparse collection. Property (ii) of Lemma~\ref{lem:tedious} guarantees that for every $k=0,2,\ldots,K-1$ we will have 
\[
B\in\mathcal B_{k+1} \implies r_B \leq \frac12 r_L\qquad \forall L\in\mathcal B_k(B)\coloneqq\{L\in\mathcal B_{k}:\, c_oL\cap B\neq \varnothing\}.
\]
In particular, by choosing the  ending parameter $K+1$ sufficiently large we can guarantee that 
\[
\sup\{r_B: B\in \mathcal \mathcal B_{K+1}\} \leq 2^{-K} c_or_{B_0}<\sigma.
\]

Associate to each $\mathcal B_k=\{B_k\}$ a partition of unity $\{\phi_{B_k}:B_k\in \mathcal B_k\}$ on $E_k$, and let $\mathcal B_k^\sigma=\{B\in \mathcal B_k: s_B>\sigma\}$. Notice that in the case $k=0$ 
we simply have $\phi_{B_0}=\ind_{B_0}$. We claim the following equality obtained inductively for all $\ell\leq K$
\begin{equation}
\label{e:maindeco2}
\l T_{\sigma} ^{s_{B_0}} f_1,f_2\r = \sum_{k=0}^\ell
 \sum_{B_k\in \mathcal B_k^\sigma} \Lambda^\sigma_{(B_k, \mathcal B_{k+1})} (f_1\phi_{B_k},f_2 ) + \sum_{B_{\ell+1}\in \mathcal B_{\ell+1}^\sigma}
\l T_\sigma ^{s_{B_{\ell+1}}} [f_1    \phi_{B_{\ell+1}}],f_2  \r,
\end{equation}
where $\Lambda^\sigma_{(B_k, \mathcal B_{k+1})}$ refers to the stopping form \eqref{e:stofo} with $L=B_k,\mathcal B=\mathcal B_{k+1}$: the proof of \eqref{e:maindeco2} is postponed until the end of the section. In particular $\mathcal B_{K+1}^\sigma$ is empty, and we obtain
\begin{equation}
\label{e:maindeco3}
 \l T_{\sigma} ^{s_{B_0}} f_1,f_2\r = \sum_{k=0}^K
 \sum_{B_k\in \mathcal B_k^\sigma} \Lambda^\sigma_{(B_k, \mathcal B_{k+1})} (f_1\phi_{B_k},f_2\ind_{c_o B_k} )  
\end{equation}
where we also used the support of $T^{s_{B_k}}_\sigma$ to insert the cutoffs $ \ind_{c_o B_k}$ on $f_2$.  We then estimate for $B_k\in \mathcal B_k^\sigma$, using Lemma \ref{lemma:iter}
\begin{equation}
\label{e:maindeco4}
\begin{split}
 \big|\Lambda^\sigma_{(B_k, \mathcal B_{k+1})} (f_1\phi_{B_k},f_2\ind_{q B_k})\big| & \leq  \left[C_{ p }+  \|\omega\|_{\mathrm{Dini}}\right]|B| \|f_1\|_{p_1,(B_k, \mathcal B_{k+1})} \|f_2\|_{p_2,(c_oB_k, \mathcal B_{k+1})}\\ &\lesssim  \left[C_{ p }+  \|\omega\|_{\mathrm{Dini}}\right] |B_k|\l f_1\r_{p_1,c_1 B_k} \l f_2\r_{p_2,c_1 B_k},
\end{split}
\end{equation}
where we used Remark~\ref{rmrk:stopped<lp} to pass to the last line.  Combining \eqref{e:maindeco3} with  \eqref{e:maindeco4} leads to 
\[
\big|\l T_{\sigma} ^{s_{B_0}} f_1,f_2\r\big| \lesssim  \left[C_{ p }+  \|\omega\|_{\mathrm{Dini}}\right] \sum_{B\in \mathcal S} |B|\l f_1\r_{p_1,c_1 B} \l f_2\r_{p_2,c_1 B}
 \]
which is the sparse estimate being sought.

\begin{proof}[Proof of \eqref{e:maindeco2}] The proof is by induction on $\ell$.
For all $k=0,\ldots,K$ and $B_k\in \mathcal B_k^\sigma$, we have
\begin{equation} \label{e:trickster}
\l T_\sigma^{s_{B_k}} [f_1 \phi_{B_k}],f_2  \r  =  \Lambda^\sigma_{(B_k, \mathcal B_{k+1})}(f_1\phi_{B_k}, f_2 ) + \sum_{B_{k+1}\in \mathcal B_{k+1}^\sigma}
\l T_\sigma ^{s_{B_{k+1}}} [f_1 \phi_{B_k}  \phi_{B_{k+1}}],f_2  \r.
\end{equation}
In particular \eqref{e:maindeco2} holds for $\ell=0$ as $\phi_{B_0}=1$ on the support of $f_1$. Suppose now $\ell\geq 1$ and \eqref{e:maindeco2} has been verified for $\ell-1$ in place of $\ell$. Using \eqref{e:trickster} on each $B_\ell\in \mathcal B_\ell$ 
\[
\begin{split} &\quad
\langle T_{\sigma} ^{s_{B_0}} f_1,f_2\r - \sum_{k=0}^{\ell}
 \sum_{B_k\in \mathcal B_k^\sigma} \Lambda^\sigma_{(B_k, \mathcal B_{k+1})} (f_1\phi_{B_k},f_2) = \sum_{B_\ell \in \mathcal B_{\ell}^\sigma}  \sum_{B_{\ell+1}\in \mathcal B_{\ell+1}^\sigma}
\l T_\sigma ^{s_{B_{\ell+1}}} [f_1 \phi_{B_\ell}  \phi_{B_{\ell+1}}],f_2  \rangle
\\
 &\qquad =   \sum_{B_{\ell+1}\in \mathcal B_{\ell+1}^\sigma} \sum_{B_\ell \in \mathcal B_{\ell}}
\l T_\sigma ^{s_{B_{\ell+1}}} [f_1 \phi_{B_\ell}  \phi_{B_{\ell+1}}],f_2  \rangle =
\sum_{B_{\ell+1}\in \mathcal B_{\ell+1}^\sigma} 
\l T_\sigma ^{s_{B_{\ell+1}}} [f_1 \ind_{E_\ell}  \phi_{B_{\ell+1}}],f_2  \rangle  
\\
&\qquad = \sum_{B_{\ell+1}\in \mathcal B_{\ell+1}^\sigma} 
\l T_\sigma ^{s_{B_{\ell+1}}}[ f_1   \phi_{B_{\ell+1}}],f_2  \rangle 
\end{split}
\]
which completes the inductive step. To pass to the second line we used the fact that whenever $B_{\ell+1}\in \mathcal B_{\ell+1}^\sigma$ and  $B_\ell\in \mathcal B_\ell$ is such that   $\phi_{B_\ell} \phi_{B_{\ell+1}}\neq 0$, then $r_{B_\ell}\geq2 r_{B_{\ell+1}}$, whence $B_\ell\in \mathcal B_\ell^\sigma$: in other words, $\phi_{B_\ell} \phi_{B_{\ell+1}}= 0$ if $B_\ell \not \in \mathcal B_\ell^\sigma$ . In the final equality we have used that $B_{\ell+1}\subset E_\ell$ for all $B_{\ell+1} \in \mathcal B_{\ell+1}$.
\end{proof}

\subsection{Proof of Lemma \ref{lemma:iter}} All the implicit constants throughout this section may depend on the parameters of the space of homogeneous type and on the constant $c_o$ of \eqref{eq:localization}. In order to clean up the presentation, the dependence on these parameters is hidden by the almost inequality sign.  
\begin{remark}[Supports and scaling] \label{r:ss}
By definition of the stopping form,  we are free to assume $\mathrm{supp}\,h_1\subset L$. Because of the localization assumption \eqref{eq:localization} for $T$ and the fact that $\mathrm{supp}\,h_1\subset L$ and that the largest scale in the sum in \eqref{e:stofo} is $s_L$, we may assume that $\supp h_2 \subset c_o L$.
Furthermore, due to the presence of the partition of unity $\phi_B$ in the summation appearing in \eqref{e:stofo} we are allowed to purge from $\mathcal B$ the balls $B$ with $B\cap L= \varnothing$. In particular may assume $B\subset 2c_{\d} c_o L\subset qL$ for all $B\in\mathcal B$ since $s_B\leq s_L/2$ for all balls in the sum. 
\end{remark}
For convenience, the stopping norms are normalized
\begin{equation}\label{eq:normalized}
\|h_1\|_{p_1,(L,\mathcal B)}= \|h_2\|_{p_2,(L,\mathcal B)}=1.
\end{equation}
We then need to bound $\tilde\Lambda(h_1,h_2)$, where  $\tilde\Lambda$ is the  form
\begin{equation}
\label{e:locl}
\tilde \Lambda(u_1,u_2)\coloneqq \left\langle T_\sigma ^{s_{L}}\left[ u_1\ind_{  E^c}\right], u_2 \right \rangle+ \sum_{B\in \mathcal B} \left\langle T^{s_L}_{{s_B\vee \sigma}} (u_1\phi_B), u_2 \right\rangle. 
\end{equation}
The main tools are the following   Calder\'on-Zygmund decompositions $h_i = b_i + g_i$ where 
\begin{equation}
\label{e:czdec1}
\begin{split}
 &b_i\coloneqq\sum_{B\in \mathcal B} b_{i,B}, \quad  b_{i,B}\coloneqq h_i\phi_B -  \left(\textstyle  {\intav_{\!B} h_i\phi_B } \right)\ind_{B},
\quad g_i=h_i\ind_{ E^c} + \sum_{B\in \mathcal B}  \left( \textstyle {\intav_{\!B} h_i\phi_B }\right)\ind_{B},
\end{split}
\end{equation}
The following equalities and  estimates are readily obtained from the definition of the stopping norms and \eqref{eq:normalized}:
\begin{align}
\label{e:supportz}&  \mathrm{supp}\, b_{i,B} \subset B, \qquad
\mathrm{supp}\, b_i,\,  \mathrm{supp}\, g_i \subset qL, \qquad B\in \mathcal B,\, i=1,2;
\\& \label{e:meanzero1}
 \int b_{i,B} =0  \qquad\forall B\in \mathcal B, \qquad i=1,2
\\
&\label{e.bad}\l b_i \r_{p_i,qL}\lesssim \sup_{B\in \mathcal B}\langle b_{i,B}\rangle_{p_i,B}  \lesssim 1, \qquad i=1,2;
\\ & \label{e.good}
\|g_i\|_{\infty} \lesssim 1, \qquad i=1,2.
\end{align}
We then decompose
\begin{equation}
\label{eq:split}
\begin{split}
\tilde \Lambda(h_1,h_2) =\tilde \Lambda(g_1,g_2) + \tilde \Lambda(b_1,g_2)  +\tilde \Lambda(g_1,b_2)+ \tilde \Lambda(b_1,b_2)  
\end{split}
\end{equation}
and we proceed to estimate each one of these terms.
\subsubsection{Estimate for $\tilde \Lambda(g_1,g_2)$} This  is the easiest one. 
Using the $L^\infty$ estimate \eqref{e.good} and the packing condition, and appealing to the uniform $L^p$-bound on truncations,
\begin{equation}
\label{e:minitrick}
\begin{split}&\quad \left| H\coloneqq \sum_{B\in \mathcal B} \left\langle T^{{s_B\vee \sigma}}_{\sigma}  (g_1\phi_B), g_2 \right\rangle \right|  =
\left|  \sum_{B\in \mathcal B} \left \langle T^{{s_B\vee \sigma}}_{\sigma}  (g_1\phi_B), g_2 \ind_{c_o B}\right\rangle \right| 
\\ &\lesssim  C_{p}     \sum_{B\in \mathcal B} |B| \langle g_1\phi_B\rangle_{p,B} \langle g_2\rangle_{p',c_oB}   
 \lesssim C_{p} |L|.
\end{split}
\end{equation}
Using the partition of unity and  \eqref{e.good} again
\[
\begin{split}
|\tilde \Lambda(g_1,g_2) -H|=\left| 
 \left\langle T^{s_L}_\sigma \left( g_1\ind_{E^c}+\sum_{B\in \mathcal B} g_{1}\phi_B \right), g_2 \right\rangle\right|  = \left|
 \left\langle T^{s_L}_\sigma   g_1 , g_2 \right\rangle\right| 
 \lesssim C_{p}|L|  \|g_1\|_{\infty} \|g_2\|_{\infty}   \lesssim C_{p}|L|,
\end{split}
\]
which completes the bound for $\tilde \Lambda(g_1,g_2) $.
\subsubsection{The estimates for  $\tilde \Lambda(b_1,g_2) ,\tilde \Lambda(b_1,b_2) $} This argument is reminiscent of \cite[Lemma 4.2]{DPHL}. Let $ B,B'\in \mathcal B$ throughout.  Our first observation is that 
\begin{equation}
\label{e:switchscale} T^{s_L}_{s_B\vee \sigma} = T^{s_L}_{s_{B'}\vee \sigma} + \sign (s_{B'}-s_B) T^{s_B\vee s_{B'}}_{(s_B\wedge s_{B'})\vee \sigma}
\end{equation}
which is verified by tedious case by case analysis, keeping in mind that $T^{u}_v=0$ if $v\geq u$. The support condition \eqref{e:supportz} also entails 
\[
T^{s_L}_{s_B\vee \sigma} [b_{1,B'}\phi_B] \not\equiv 0\implies B\in N_{\mathsf{lft}}(B')=\{B\in \mathcal B:B\cap B'\neq\varnothing \}
\]
Therefore, also noticing that $b_{1,B'}\ind_{E^c}=0$, we can write for any function $u$
\begin{equation} \label{e:b1g21}
\begin{split}
& \tilde \Lambda(b_{1,B'},u)= \sum_{B\in \mathcal B} \left\langle  T^{s_L}_{s_B\vee \sigma} [b_{1,B'}\phi_B], u\right\rangle
=\sum_{B\in N_{\mathsf{lft}}(B')} \left\langle  T^{s_L}_{s_B\vee \sigma} [b_{1,B'}\phi_B], u\right\rangle
\\ 
&\qquad = \sum_{B\in N_{\mathsf{lft}}(B')} \left\langle  T^{s_L}_{s_{B'}\vee \sigma} [b_{1,B'}\phi_B], u\right\rangle + 
\sum_{B\in N_{\mathsf{lft}}(B')}  \sign (s_{B'}-s_B) \left\langle  T^{s_B\vee s_{B'}}_{(s_B\wedge s_{B'})\vee \sigma}[b_{1,B'}\phi_B], u\right\rangle 
\\ 
&\qquad =  \left\langle  T^{s_L}_{s_{B'}\vee \sigma} b_{1,B'}, u\right\rangle + 
\sum_{B\in N_{\mathsf{lft}}(B')}  \sign (s_{B'}-s_B) \left\langle  T^{s_B\vee s_{B'}}_{(s_B\wedge s_{B'})\vee \sigma}[b_{1,B'}\phi_B], u\right\rangle .
	\end{split}
\end{equation}
We estimate each of the summands in \eqref{e:b1g21}. The easiest is the first one. For $s\geq s_{B'}$, let $B'(s)$ be the ball with center same as $B'$ and radius $2^s$. Using localization, the adjoint $L^{p_1}$-improving property, and relying on the cancellation \eqref{e:meanzero1},
\begin{equation}\label{e:b1g22} 
\begin{split}
&\left|\left\langle  T^{s_L}_{s_{B'}\vee \sigma} b_{1,B'}, u\right\rangle\right| \leq \sum_{s_{B'}\vee \sigma\leq s<s_L}
\left|\left\langle  T(s) b_{1,B'}, u \ind_{c_oB'(s)}\right\rangle\right| 
\\ 
&\qquad \lesssim \sum_{s_{B'}\vee \sigma\leq s<s_L} \omega(2^{s_{B'}-s}) |B'(s)| \l  b_{1,B'} \r_{p_1,B'(s)} \l  u\r_{p_2,B'(s)} 
 \\
&\qquad =|B'| \l b_{1,B'}\r_{p_1,B'}\sum_{s_{B'}\vee \sigma\leq s<s_L} \omega(2^{s_{B'}-s})    \l  u \r_{p_2,B'(s)}
\\
&\qquad \lesssim \|\omega\|_{\mathrm{Dini}} |B'| \l b_{1,B'} \r_{p_1,B'}  \sup_{s_{B'}\vee \sigma\leq s<s_L}\l u \r_{p_2,B'(s)}.
\end{split}\end{equation}

There are $\#N_{\mathsf{lft}}(B')\lesssim 1$ terms involving $B\in N_{\mathsf{lft}}(B')$, by the Whitney property, and each is  handled as follows. Notice that  $2^{s_{B}}\sim2^{s_{B'}}$ if $B\in  N_{\mathsf{lft}}(B')$ by the Whitney property. Set
$
B^\pm= \arg\max\{\pm s_U:U\in \{B, B'\}\}$. To fix the ambiguity  when $s_B=s_{B'}$, we set $B^+=B', B^-=B$; this last choice is immaterial for the argument.
{Applying the single scale $L^{p_1}$-improving property \eqref{eq:Lpimpss}} and observing that there are at most $\lesssim 1$ scales between $s_{B^-}\vee \sigma$ and $ s_{B^+}$, 
\begin{equation}
\label{e:b1g23} 
\begin{split}
& \left| \left\langle  T^{s_B\vee s_{B'}}_{(s_B\wedge s_{B'})\vee \sigma}[b_{1,B'}\phi_B], u\right\rangle\right| \leq 
\sum_{s_{B^-} \vee \sigma\leq s<s_{B^+}} \left|\left\langle  T(s) [b_{1,B'}\phi_B], u \ind_{c_oB^+}\right\rangle\right| 
\\ 
&\qquad \lesssim   |B^+| \l  b_{1,B'}\phi_B \r _{p_1,B^+} \l  u \r_{p_2,c_oB^+} \lesssim |B'|\l  b_{1,B'} \r_{p_1,B'}  \sup_{s_{B'}\vee \sigma\leq s<s_{\mathbb X}+s_L}\l u \r_{p_2, c_oB'(s)}
\end{split}
\end{equation}
where $s_{\mathbb X}\geq 0$ is a constant depending on the homogeneous structure of $\,\mathbb X$; the last inequality follows from the fact that $|B'|\sim |B^+|$ and there is $s\sim s_{B^+}$ such that $B^+\subset c_oB'(s)$. Collecting \eqref{e:b1g21}, \eqref{e:b1g22} and \eqref{e:b1g23}, it follows
\[
|\tilde \Lambda(b_{1,B'},u)| \lesssim \|\omega\|_{\mathrm{Dini}}  |B'|\l b_{1,B'}\r_{p_1,B'} \left( \sup_{s_{B'}\vee \sigma\leq s<s_{\mathbb X}+s_L}\l u\r_{p_2, B'(s)}\right).
\]
We plan to apply the estimate above for functions $u$ supported in $c_oL$. Then a further simplification can be made  by noticing that
\[
\sup_{ s_L \leq s<s_{\mathbb X}+s_L}\l u\r_{p_2, B'(s)} \leq \sup_{\substack{ s_L \leq s<s_{\mathbb X}+s_L\\ B'(s)\cap c_oL\neq \varnothing}} \frac{|c_oL|}{|B'(s)|}\l u \r_{p_2,c_oL}\lesssim_\eps \l u \r_{p_2,c_oL}
\]
Using this we can rewrite the estimate above in the form
\begin{equation}\label{eq:badmax}
|\tilde \Lambda(b_{1,B'},u)| \lesssim \|\omega\|_{\mathrm{Dini}}  |B'|\l b_{1,B'}\r_{p_1,B'}  \max \left( \sup_{s_{B'}\vee \sigma\leq s< s_L}\l u\r_{p_2, c_oB'(s)},\l u_2\r_{c_oL}\right).
\end{equation}
We now bound $\Lambda(b_{1},g_2)$ immediately. Indeed, using \eqref{e.bad}, \eqref{e.good}
\begin{equation}\label{eq:bg}
\begin{split}
|\tilde \Lambda(b_{1},g_2)| & \leq \sum_{B'\in \mathcal B}  |\tilde \Lambda(b_{1,B'},g_2)| \lesssim \|\omega\|_{\mathrm{Dini}}
\sum_{B'\in \mathcal B}  |B'|\l b_{1,B'} \r_{p_1,B'} \| g_2 \|_{\infty}
\\
&\lesssim  \|\omega\|_{\mathrm{Dini}}
\sum_{B'\in \mathcal B}  |B'|\lesssim \|\omega\|_{\mathrm{Dini}}|L|.\end{split}
\end{equation}
We then bound $\tilde\Lambda(b_{1},b_2)$. Preliminarily notice that $\l b_2 \r_{p_2,c_o L}\lesssim \l b_2 \r_{p_2,qL}\lesssim 1$ by \eqref{e.bad}. Thus we focus on estimating $\l b_2\r_{p_2,c_oB'(s)}$ for  $s_{B'}\vee \sigma\leq s<s_L$ and fixed $B'\in\mathcal B$. Consider all $B\in \mathcal B$ such that $B\cap c_oB'(s)\neq \emptyset$. Then either $B'(s)\subseteq qB $,  or $B'(s)\nsubseteq  q B $.  In the first case we say that $(s,B)$ is of type 1 otherwise we say it is of type 2. If $(s,B)$ is of type 1, then  $B'\subseteq B'(s)\subseteq qB$ which occurs for $O_{\mathbb X}( 1)$ balls $ B\in \mathcal B$ since $\mathcal B$ is a $q$-Whitney decomposition of some open set. For type 1 pairs $(s,B)$ we then have by property (iv) of Lemma~\ref{lem:whitney} and the fact that $\mathcal B$ is $q$-Whitney, that $2^{s_{B'}}\sim 2^s \sim 2^{s_B}$ and therefore, using \eqref{e.bad} in the last step,
\[
\begin{split}
\langle| b_2|\rangle_{p_2,c_o B'(s)} \leq \sum_{ \substack {B\in  \mathcal B\\    qB\supseteq B'(s)\supseteq B'  }}\langle b_{2,B} \rangle_{p_2,c_o B'(s)} \lesssim 1.
\end{split}
\]
If $(s,B)$ is of type 2 instead the by the definition of $q$ we necessarily have that $B\subseteq q B'(s)$; following up the finite overlap of $\mathcal B$ with \eqref{e.bad} and then using the packing condition, 
\[
\begin{split}
\langle| b_2|\rangle_{p_2, B'(s)}^{p_2} \leq  \sum_{ \substack {B\in  \mathcal B\\   B\subseteq  q B'(s) }} \frac{|B|}{|B'(s)|}\langle| b_2|\rangle_{p_2,B}^{p_2}  \lesssim 1.
\end{split}
\]
It follows that, using \eqref{eq:badmax} and the estimates above, that
\begin{equation}
\label{e:b1b2fin}
\begin{split}
|\tilde \Lambda(b_{1},b_2)| &  \leq \sum_{B'\in \mathcal B}  |\tilde \Lambda(b_{1,B'},b_2)| 
\\
& \lesssim \|\omega\|_{\mathrm{Dini}}
\sum_{B'\in \mathcal B}  |B'|\langle b_{1,B'}\rangle_{p_1,B'} \max\left( \sup_{s_{B'}\vee \sigma\leq s<s_L}\langle b_2\rangle_{p_2,c_o B'(s)},\l u\r_{p_2,c_oL}\right)
\\
&\lesssim  \|\omega\|_{\mathrm{Dini}}
\sum_{B'\in \mathcal B}  |B'|\lesssim \|\omega\|_{\mathrm{Dini}}|L|.\end{split}
\end{equation}

\subsubsection{The estimate  $\tilde \Lambda(g_1,b_2) $} This is similar to the previous bounds but uses the adjoint $U^v_u=(T_u^v)^*$. Let $B'\in \mathcal B$. Now $U^v_u=(T_u^v)^*$ satisfies the same assumptions as $ T_u^v$ and we may reuse the material of the previous subsection.  First of all, if $s_B>\sigma$, $U_{ \sigma}^{s_B} b_{2,B'} $ vanishes off $E$, because $c_oB'\subset E$ since $c_o<q$ and $B'\in\mathcal B$ is a $q$-Whtiney cover of $E$. Therefore
\begin{equation}
\label{e:g1b21}
|\tilde
\Lambda(g_1\ind_{E^c}, b_{2,B'})| = |\langle g_1\ind_{E^c}, U_{ s_{B'}}^{s_L}b_{2,B'} \rangle | \lesssim \|\omega\|_{\mathrm{Dini}} |B'|
\end{equation}
which is dual to an instance of \eqref{eq:bg}. Then, defining $N_{\mathsf{rgh}}(B')=\{B\in \mathcal B:B\cap c_oB'\neq \varnothing\}$, using the support condition and the equality \eqref{e:switchscale} for $U $ in place of $ T$, 
\[
\Lambda(g_1\ind_{E}, b_{2,B'}) =  \langle g_1\ind_{E}, U_{ s_{B'}}^{s_L}b_{2,B'} \rangle + 
\sum_{B\in N_{\mathsf{rgh}}(B') } \sign (s_{B'}-s_B) \left\langle g_{1}\phi_B,  U^{s_B\vee s_{B'}}_{(s_B\wedge s_{B'})\vee \sigma} b_{2,B'}\right\rangle.
\]
Applying, with $U $ in place of $ T$, \eqref{e:b1g22} for the first term  and  \eqref{e:b1g23} for the $\#N_{\mathsf{rgh}}(B')\lesssim 1$ summands in the second term, we obtain
\begin{equation}
\label{e:g1b22}
\left|\tilde
\Lambda(g_1\ind_{E}, b_{2,B'})\right|    \lesssim \|\omega\|_{\mathrm{Dini}} |B'|.
\end{equation}
Combining \eqref{e:g1b21} with \eqref{e:g1b22} and summing over $B'\in \mathcal B$ yields the sought after estimate for $\tilde \Lambda(g_1,b_2) $ and completes the proof of the Lemma.
\section{Proof of Theorem~\ref{th:max}}\label{sec:max}  In this last section we provide the proof of the maximal version of our sparse domination result as formulated in Theorem~\ref{th:max}. The only essential difference with the proof of Theorem~\ref{th:SD} is the stopping form to be estimated involves maximal averages. Let $\{T(s)\}_{s\in\Z}$ be a sequence of linear operators satisfying the assumptions of Theorem~\ref{th:max}, in particular \eqref{eq:localmax}. Redefine for $f\in L^\infty(\mathbb X)$
\begin{equation}
\label{e:Ms}
\mathrm{M}_\sigma^\tau f(x) = \sup_{\sigma \leq s <\tau} \left|[T(s) f](x)\right|, \qquad x\in \mathbb X.
\end{equation} 
As  $\mathrm{M}_\sigma^\tau f\leq T_\star f$, by assumption $\|\mathrm{M}_\sigma^\tau\|_{L^\infty(\mathbb X)}\leq \|T_\star\|_{L^\infty(\mathbb X)}$ uniformly in $\sigma, \tau$.
Let $L,\mathcal B$ be as in Subsection \ref{ss:sf} and partition of unity $\{\phi_B:B\in \mathcal B\}$ subordinate to $\mathcal B$. The stopping form  of  \eqref{e:Ms} associated to this data is\begin{equation}
\label{e:stofom}
\Lambda^{\sigma,\tau} _{(L,\mathcal B)}(h_1,h_2)\coloneqq  \left\langle \mathrm{M}_\sigma^{s_L \wedge \tau}\left[ h_1\ind_{L\setminus E}\right],  h_2 \right\rangle  +\sum_{B\in\mathcal B}\left\langle  \mathrm{M}^{s_L \wedge \tau}_{ s_{B}\vee \sigma} \left[h_1 \ind_{L} \phi_{B}\right], h_2\right\r.
\end{equation}
Theorem~\ref{th:max} then follows from the next lemma by repetition of the same procedure that was carried out in Subsection \ref{s.together}.
\begin{lemma}\label{lemma:iter2} Let  $T$ satisfy the assumptions of Theorem~\ref{th:max} and $L,\mathcal B$ be as in \eqref{eq:stopwhit}. Then
\[
\sup_{\sigma>0}\left|\Lambda^\sigma _{(L,\mathcal B)}(h_1,h_2)\right| \lesssim_{X} \left[C_{ p }+  \|\omega\|_{\mathrm{Dini}}\right]|L| \|h_1\|_{p_1,(L,\mathcal B)} \|h_2\|_{p_2,(L,\mathcal B)}.
\]
\end{lemma}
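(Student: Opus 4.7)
The plan is to emulate the proof of Lemma \ref{lemma:iter}, reducing the maximal stopping form to a family of linearized forms on which the argument of Section \ref{sec:proofmain} can be replayed. Normalize $\|h_1\|_{p_1,(L,\mathcal B)}=\|h_2\|_{p_2,(L,\mathcal B)}=1$ and, after replacing $h_2$ by $|h_2|$, assume $h_2\geq 0$. For each $B\in\mathcal B$ (and analogously for the first summand of $\Lambda^{\sigma,\tau}_{(L,\mathcal B)}$) choose a measurable scale selector $\mathsf s_B(\cdot)\in\{s_B\vee\sigma,\ldots, s_L\wedge\tau-1\}$ and a sign $e_B(\cdot)\in\{-1,1\}$ so that
\[
\mathrm{M}^{s_L\wedge\tau}_{s_B\vee\sigma}[f](x) = T(\mathsf s_B(x))f(x)\,e_B(x),\qquad x\in\mathbb X,
\]
for every $f\in L^\infty(\mathbb X)$. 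Writing $F_{s,B}=\{\mathsf s_B=s\}$, the stopping form becomes a sum of genuinely linear pieces
\[
\langle \mathrm{M}^{s_L\wedge\tau}_{s_B\vee\sigma}[f],h_2\rangle=\sum_s \langle T(s)f, e_B h_2\ind_{F_{s,B}}\rangle,
\]
to which the single-scale linear analysis applies.

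Apply next the Calder\'on--Zygmund decomposition \eqref{e:czdec1}, $h_1=g_1+b_1$, $b_1=\sum_{B}b_{1,B}$, together with the splitting $h_1\phi_B=c_B\ind_B+b_{1,B}$ where $c_B=\intav_B h_1\phi_B$. The sublinearity of the maximal operator yields
\[
\Lambda^{\sigma,\tau}_{(L,\mathcal B)}(h_1,h_2)\leq \langle \mathrm{M}^{s_L\wedge\tau}_\sigma [g_1\ind_{L\setminus E}],h_2\rangle + \sum_{B\in\mathcal B}\langle \mathrm{M}^{s_L\wedge\tau}_{s_B\vee\sigma}[c_B\ind_B],h_2\rangle + \sum_{B\in\mathcal B}\langle \mathrm{M}^{s_L\wedge\tau}_{s_B\vee\sigma}[b_{1,B}],h_2\rangle.
\]
The first two terms form the \emph{good} piece. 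Recombining them via the partition of unity $\sum_B\phi_B\equiv \ind_E$ --- in the same fashion as the good-good manipulation surrounding $H$ in \eqref{e:minitrick} --- and using the hypothesis $\|T_\star\|_{L^\infty(\mathbb X)}<\infty$ in place of the uniform $L^p$-bound produces a clean bound of order $\|T_\star\|_{L^\infty(\mathbb X)}|L|$, once one invokes $\|g_1\|_\infty,|c_B|\lesssim 1$ from \eqref{e.good}, the localization \eqref{eq:localmax} to confine the supports inside $c_o L$, and the estimate $\langle h_2\rangle_{1,c_o L}\lesssim 1$ following from Remark~\ref{rmrk:lp<stopped}.

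The third sum is the \emph{bad} piece, handled by mirroring the $(bg)$ and $(bb)$ estimates of Lemma \ref{lemma:iter}. For each fixed $B\in\mathcal B$, the linearization above gives
\[
\langle \mathrm{M}^{s_L\wedge\tau}_{s_B\vee\sigma}[b_{1,B}], h_2\rangle = \sum_{s_B\vee\sigma\leq s<s_L\wedge \tau} \langle T(s)b_{1,B}, e_B h_2\ind_{F_{s,B}}\rangle,
\]
and for each scale $s\geq s_B$ the mean-zero property $\int b_{1,B}=0$ combined with the $(p_2,p_1')$-improving of $T^*(s)$ from Definition \ref{d.LPimprove}(b) yields, exactly as in \eqref{e:b1g22},
\[
|\langle T(s)b_{1,B}, e_B h_2 \ind_{F_{s,B}}\rangle| \lesssim \omega(2^{s_B-s})|B(s)|\langle b_{1,B}\rangle_{p_1,B(s)}\langle h_2\rangle_{p_2, B(s)},
\]
with $B(s)$ the concentric enlargement of $B$ to radius $2^s$. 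Summing in $s$ uses the Dini condition to extract a factor $\|\omega\|_{\mathrm{Dini}}|B|\langle b_{1,B}\rangle_{p_1,B}$, and the further summation over $B\in\mathcal B$ combined with the packing condition for $\mathcal B$ and the bound $\sup_s\langle h_2\rangle_{p_2,B(s)}\lesssim \|h_2\|_{p_2,(L,\mathcal B)}=1$ produces the desired $\|\omega\|_{\mathrm{Dini}}|L|$ contribution, exactly as in \eqref{eq:bg} and \eqref{e:b1b2fin}.

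The hard part will be substituting, at the points where the linear argument invokes the switching identity \eqref{e:switchscale}, its maximal analogue
\[
\mathrm{M}^{s_L}_{s_B\vee\sigma}f\leq \mathrm{M}^{s_L}_{s_{B'}\vee\sigma}f + \mathrm{M}^{s_B\vee s_{B'}}_{(s_B\wedge s_{B'})\vee\sigma}f,
\]
needed when controlling the $\phi_B$-localized pieces $b_{1,B'}\phi_B$ for $B\in N_{\mathsf{lft}}(B')$. The residual second summand spans only $|s_B-s_{B'}|=O(1)$ scales, by the Whitney property $r_B\eqsim r_{B'}$ whenever $B\cap B'\neq\varnothing$, and each such scale is controlled by a direct application of the single-scale $(p_2,p_1')$-improving estimate as in \eqref{e:b1g23}; this transfers the bounds \eqref{eq:badmax}, \eqref{eq:bg}, \eqref{e:b1b2fin} to the maximal setting without loss, completing the proof.
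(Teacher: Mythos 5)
Your proposal follows the paper's strategy closely and is essentially correct: decompose only $h_1$ into good and bad parts, control the good part via $\|T_\star\|_{L^\infty}$ and the packing/stopping-norm bounds on $h_2$, and handle the bad part by applying the $(p_2,p_1')$-improving estimate for the adjoint at each scale, after linearizing the maximal operator, then summing via the Dini condition; the maximal switching inequality in place of \eqref{e:switchscale} is exactly what the paper uses in \eqref{e:b1g21m}. One imprecision worth flagging: the linearization cannot be fixed so that $\mathrm{M}^{s_L\wedge\tau}_{s_B\vee\sigma}[f](x)=e_B(x)\,T(\mathsf s_B(x))f(x)$ \emph{for every} $f\in L^\infty(\mathbb X)$ simultaneously; the argmax scale and the sign necessarily depend on the function being maximized. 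The correct move, as in the paper, is to choose the selector for the specific bad piece $b_{1,B}$ (the sets $A_s$ and signs $\eps_s$ in the paper's proof), after the Calder\'on--Zygmund decomposition, or equivalently to choose it for $h_1\phi_B$ and then split the linearized form by the linearity of $T(s)$. Since the pieces to which you subsequently apply the linearization are fixed functions, the slip does not create a genuine gap, but the ``for every $f$'' clause should be removed.
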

\begin{proof}
We repeat the same reductions operated at the beginning of  the proof of Lemma \ref{lemma:iter}, in particular, those performed in Remark \ref{r:ss}.
In particular 
\[ \mathrm{supp}\,h_1\subset L, \qquad    \mathrm{supp}\,h_2\subset c_o L\subset q L, \qquad
\|h_1\|_{p_1,(L,\mathcal B)}= \|h_2\|_{p_2,(L,\mathcal B)}=1.
\]
However, in addition, we may assume that $h_2\geq 0$ without loss of generality. We repeat for the Calder\'on-Zygmund decomposition   $h_1 = b_1 + g_1$ of  
\eqref{e:czdec1}, and in particular \eqref{e:supportz} to \eqref{e.good} hold for $i=1$. For $h_2$   no such decomposition is necessary. In analogy with \eqref{e:locl} we write 
\[
\tilde \Lambda(u_1,u_2)= \left \langle
 \mathrm{M}_\sigma^{s_L \wedge \tau}\left[ u_1\ind_{ E^c}\right],  u_2 \right\rangle 
  +\sum_{B\in\mathcal B}\left\langle  \mathrm{M}^{s_L \wedge \tau}_{ s_{B}\vee \sigma} \left[u_1 \ind_{L} \phi_{B}\right], u_2\right\r 
\] and proceed with estimating the two terms  $ \tilde \Lambda(g_1,h_2), \tilde   \Lambda(b_1,h_2)$.
\subsubsection*{The estimate for $\tilde \Lambda (g_1,h_2)$} This term is rather easy.
By the support conditions \eqref{e:supportz} and an appeal to the $\|T_\star\|_{L^\infty(\mathbb X)}$ estimate, followed by the $L^\infty$ estimate \eqref{e.good} for $i=1$, the stopping norm estimate for $h_2$ and the packing condition, 
\[
\begin{split}&\quad \left| H\coloneqq \sum_{B\in \mathcal B}
 \left\langle \mathrm{M}^{s_B \wedge \tau}_{\sigma}  (g_1\phi_B), h_2 \right\rangle \right|  =
\left|  \sum_{B\in \mathcal B} \left\langle \mathrm{M}^{s_B \wedge \tau}_{\sigma}   (g_1\phi_B), h_2 \ind_{c_o B}\right\rangle \right| 
\\ &\lesssim  \|T_\star\|_{L^\infty(\mathbb X)} \|g_1\|_\infty    \sum_{B\in \mathcal B} |B|  \langle h_2 \rangle_{p_2,c_oB}   
 \lesssim C_{p} |L|.
\end{split}
\]
Relying on the partition of unity and  \eqref{e.good} again
\[
\begin{split}
|\tilde \Lambda(g_1,h_2) -H|=\left| 
 \left\langle  \mathrm{M}^{s_L \wedge \tau}_{\sigma}  g_1 , h_2 \right\rangle\right| 
 \lesssim \|T_\star\|_{L^\infty(\mathbb X)}|L|  \|g_1\|_{\infty} \|h_2\|_{p_2, aL }   \lesssim \|T_\star\|_{L^\infty(\mathbb X)}|L|,
\end{split}
\]
thus achieving the correct estimate for  $\tilde \Lambda(h_1,g_2) $.
\subsubsection*{The estimate for  $\tilde \Lambda(b_1,h_2)  $}  In analogy with \eqref{e:b1g21}, for $u\in L^\infty(\mathbb X)$ and nonnegative 
\begin{equation}
\label{e:b1g21m}
\begin{split}
&\quad  \tilde \Lambda(b_{1,B'},u) \leq    \left\langle   \mathrm{M}^{s_L \wedge \tau}_{s_{B'}\vee \sigma} b_{1,B'}, u\right\rangle + 
\sum_{B\in N_{\mathsf{lft}}(B')}   \left\langle  \mathrm{M}^{(s_B\vee s_{B'})\wedge \tau}_{(s_B\wedge s_{B'})\vee \sigma}[b_{1,B'}\phi_B], u\right\rangle.
\end{split}
\end{equation}
Using localization, we decompose the first summand \eqref{e:b1g21m} by splitting
\[
 \left\langle \mathrm{M}^{s_L \wedge \tau}_{s_{B'}\vee \sigma} b_{1,B'}, u\right\rangle =  \sum_{s_{B'}\vee \sigma\leq s<s_L\wedge \tau}
\left\langle  b_{1,B'},  T^*(s) \left[u\eps_s \ind_{c_oB'(s)}\right]\right\rangle 
 \]
 where $B'(s)$ is again the ball with center same as $B'$ and radius $2^s$  and 
\[
A_s\coloneqq \left\{x\in \mathbb X: \mathrm{M}^{s_L \wedge \tau}_{s_{B'}\vee \sigma} b_{1,B'} (x) = |[T(s) f](x)| \right\},\quad \eps_s\coloneqq \ind_{A_s} \sign [T(s) f], \quad s_{B'}\vee \sigma \leq s <s_L \wedge \tau.
\]
An appeal to the   $(p_2, p_1')$-improving property of $T^*(s)$, which is possible in view of the cancellation \eqref{e:meanzero1}, entails

\begin{equation}\label{e:b1g22m} 
\begin{split}
&\quad\sum_{s_{B'}\vee \sigma\leq s<s_L\wedge \tau}
\left\langle  b_{1,B'},  T^*(s) \left[u\eps_s \ind_{c_oB'(s)}\right]\right\rangle \\ &\leq  |B' | \langle b_{1,B'}\rangle_{p_1,B'} \sum_{s_{B'}\vee \sigma\leq s<s_L\wedge \tau} \omega(2^{s_{B'}-s})\langle u\eps_s \rangle_{p_2, c_oB'(s)}  
 \lesssim \|\omega\|_{\mathrm{Dini}} |B'|   \sup_{s_{B'}\vee \sigma\leq s<s_L}\langle u\rangle_{p_2,c_o B'(s)}.
\end{split}
\end{equation}

Similarly, for the  $\#N_{\mathsf{lft}}(B')\lesssim 1$ terms involving $B\in N_{\mathsf{lft}}(B')$,  this time applying  the single scale $(p_2,p_1')$-improving property, and observing that there are at most $\lesssim 1$ scales between $s_{B^-}\vee \sigma$ and $ s_{B^+}\wedge \tau$, 
\begin{equation}
\label{e:b1g23m} 
\begin{split}
 \left\langle  \mathrm{M}^{(s_B\vee s_{B'})\wedge \tau}_{(s_B\wedge s_{B'})\vee \sigma}[b_{1,B'}\phi_B], u\right\rangle \lesssim I^* |B'|\langle| b_{1,B'}|\rangle_{p_1,B'}  \sup_{s_{B'}\vee \sigma\leq s<s_L}\langle u\rangle_{p_2,c_o B'(s)}.
\end{split}
\end{equation}
Bringing \eqref{e:b1g21m}, \eqref{e:b1g22m} and \eqref{e:b1g23m} together,  we have proved that
\[
\tilde \Lambda(b_{1,B'},u) \lesssim \|\omega\|_{\mathrm{Dini}}  |B'|  \left( \sup_{s_{B'}\vee \sigma\leq s<s_L}\langle u\rangle_{p_2,c_o B'(s)}\right).
\]
Via the same argument applied towards the proof of \eqref{e:b1b2fin} we realize that 
\[
  \sup_{s_{B'}\vee \sigma\leq s<s_L}\langle h_2\rangle_{p_2,c_o B'(s)} \lesssim \|h_2\|_{p_2,(L,\mathcal B) } =1,
\]
so that the correct estimate for $\tilde \Lambda(b_1,h_2) $ follows by plugging the last display into \eqref{e:b1g23m} and summing over $B'\in \mathcal B$. This completes the proof of the lemma.
\end{proof}

\bibliography{MetricSD}
\bibliographystyle{amsplain}
\end{document}